\DeclareMathAlphabet      {\mathsl}{OT1}{cmr}{m}{sl}
\newtheorem{thm}{Theorem}[section]
\newtheorem{cor}[thm]{Corollary}
\newtheorem{lem}[thm]{Lemma}
\newmdtheoremenv{boxProb}{Problem}
\newmdtheoremenv{boxDef}{Definition}
\newmdtheoremenv{boxCor}{Corollary}
\newmdtheoremenv{compjob}{Computational Job}
\newmdtheoremenv{reqi}{Requirement}
\newcommand\largeparbreak{\par\bigskip}
\newcommand*\tageq{\refstepcounter{equation}\tag{\theequation}}
\newcommand{\inv}{^{-1}}
\newcommand{\bmu}{\boldsymbol{\mu}}
\newcommand{\htau}{{\hat{\tau}}}
\newcommand{\blambda}{\boldsymbol{\lambda}}
\newcommand{\bxi}{\boldsymbol{\xi}}
\newcommand{\bs}{\mathbf{s}}
\renewcommand{\t}{^{\mathsf{T}}}
\newcommand{\dom}{\operatorname{dom}}
\newcommand{\hp}[1]{^{(#1)}}
\newcommand{\hblambda}{\hat{\blambda}}
\newcommand{\hbmu}{\hat{\bmu}}
\newcommand{\hbz}{\hat{\mathbf{z}}}
\newcommand{\away}[1]{}
\newcommand{\R}{\mathbb{R}}
\newcommand{\N}{\mathbb{N}}
\newcommand{\cB}{\mathcal{B}}
\newcommand{\cN}{\mathcal{N}}
\newcommand{\cF}{\mathcal{F}}
\newcommand{\cU}{\mathcal{U}}
\newcommand{\cO}{\mathcal{O}}
\newcommand{\matleq}{\preceq}
\newcommand{\bA}{\mathbf{A}}
\newcommand{\bG}{\mathbf{G}}
\newcommand{\bV}{\mathbf{V}}
\newcommand{\bQ}{\mathbf{Q}}
\newcommand{\bK}{\mathbf{K}}
\newcommand{\bx}{\mathbf{x}}
\newcommand{\by}{\mathbf{y}}
\newcommand{\bb}{\mathbf{b}}
\newcommand{\br}{\mathbf{r}}
\newcommand{\bc}{\mathbf{c}}
\newcommand{\bg}{\mathbf{g}}
\newcommand{\bd}{\mathbf{d}}
\newcommand{\tbQ}{\tilde{\mathbf{Q}}}
\newcommand{\tbA}{\tilde{\mathbf{A}}}
\newcommand{\tbc}{\tilde{\mathbf{c}}}
\newcommand{\tbb}{\tilde{\mathbf{b}}}
\newcommand{\hbx}{\hat{\mathbf{x}}}
\newcommand{\hbr}{\hat{\mathbf{r}}}
\newcommand{\bz}{\mathbf{z}}
\newcommand{\be}{\mathbf{1}}
\newcommand{\bei}[1]{{\mathbf{e}}}
\newcommand{\bw}{\mathbf{w}}
\newcommand{\bM}{\mathbf{M}}
\newcommand{\bC}{\mathbf{C}}
\newcommand{\bW}{\mathbf{W}}
\newcommand{\bH}{\mathbf{H}}
\newcommand{\bI}{\mathbf{I}}
\newcommand{\bO}{\mathbf{0}}
\newcommand{\tbx}{\tilde{\mathbf{x}}}
\newcommand{\tbu}{\tilde{\mathbf{u}}}
\newcommand{\tbmu}{\tilde{\boldsymbol{\mu}}}
\newcommand{\tblambda}{\tilde{\boldsymbol{\lambda}}}
\newcommand{\tbz}{\tilde{\mathbf{z}}}
\newcommand{\tbr}{\tilde{\mathbf{r}}}
\newcommand{\bu}{\mathbf{u}}
\newcommand{\bv}{\mathbf{v}}
\newcommand{\opdiag}{\mathsl{{diag}}}
\newcommand{\cond}{\mathsl{cond}}
\newcommand{\tol}{{\mathsf{tol}}}
\newcommand{\epsMach}{\varepsilon_{\mathsf{mach}}}
\newcommand{\cbz}{\check{\bz}}
\newcommand{\cbx}{\check{\bx}}
\newcommand{\cblambda}{\check{\blambda}}
\newcommand{\cbmu}{\check{\bmu}}
\newcommand{\boRes}{\chi}
\newcommand{\itersPrimal}{K}
\newcommand{\itersPrimalDual}{M}
\newcommand{\MinFun}{g}
\newcommand{\cGap}{c_{\mathsf{gap}}}
\newcommand{\cDF}{C_{DF}}
\newcommand{\cDFinv}{C_{DFinv}}
\newcommand{\cdF}{C_{\delta{}F}}
\newcommand{\cdDF}{C_{\delta{}DF}}
\newcommand{\regQP}{\textsf{regQP}}
\newcommand{\cmu}{\check{\mu}}
\newcommand{\Cx}{\sqrt{n}}
\newcommand{\thbz}{\hat{\tbz}}
\newcommand{\boxQP}{\textsf{boxQP}}
\newcommand{\prQP}{\textsf{primalQP}}
\newcommand{\duQP}{\textsf{dualQP}}
\newcommand{\stQP}{\textsf{standardQP}}
\newcommand{\ConstantsRef}{Appendix~C\xspace}
\title{Stable interior point method for convex quadratic programming with strict error bounds}
\author{Martin Neuenhofen and Stefania Bellavia}
\institute{Martin Neuenhofen \at
	  	Department of Computer Science\\
	  	University of British Columbia\\
	  	Vancouver, Canada\\
		\url{www.MartinNeuenhofen.de}
	\and
		Stefania Bellavia \at
      	Dipartimento di Ingegneria Industriale\\
		Universit\`{a} di Firenze \\
		Florence, Italy\\
		\url{http://www2.de.unifi.it/anum/bellavia/}
}
\titlerunning{Stable IPM for convex QP with strict error bounds}
\authorrunning{M.~Neuenhofen \& S.~Bellavia}
\newcommand\suppressqed{\renewcommand{\qedsymbol}{}}
\begin{document}

\maketitle

\begin{abstract}
We present a short step interior point method for solving a class of nonlinear programming problems with quadratic objective function. Convex quadratic programming problems {can be reformulated as problems} in this class. The method is shown to have weak polynomial time complexity. {A complete proof of the numerical stability of the method is provided.} No requirements on feasibility, row-rank of the constraint Jacobian, {strict complementarity,} or conditioning of the problem are made. Infeasible problems are solved to an optimal interior least-squares solution.
\end{abstract}

\section{Introduction}
This paper is concerned with the numerical solution of the following nonlinear programming problem with quadratic objective function:
\begin{equation}
 \tag{\boxQP}
  \begin{aligned}
		\operatornamewithlimits{min}_{\bx \in \R^n}& & & q(\bx):=\frac{1}{2}\cdot\bx\t\cdot\bQ\cdot\bx+\bc\t\cdot\bx\\
		\text{subject to}& & &\|\bA\cdot\bx-\bb\|_2=\boRes\,,\quad \|\bx\|_\infty \leq 1
 \end{aligned}
\label{eqn:mainQP}%
\end{equation}%
where $\bQ \in \R^{n \times n}$ is symmetric positive semi-definite, $\bc\in\R^n$, $\bA\in\R^{m \times n}$, $\bb\in\R^n$,
 and  $\boRes$ defined as
\begin{align*}
	\boRes:= \operatornamewithlimits{min}_{\bxi \in \overline{\Omega}}\big\lbrace \,\|\bA\cdot\bxi-\bb\|_2\, \big\rbrace\,\quad\quad	
	\mbox{ with    }\quad\Omega:=  \big\lbrace \bxi \in \R^n \ \vert \ \|\bxi\|_\infty<1 \big\rbrace\,.
\end{align*}
The dimension $m$ can be either smaller, equal or larger than $n$.
This problem is bounded and feasible by construction as  $\boRes$ is computed minimizing  $\|\bA\cdot\bxi-\bb\|_2$ in $\overline{\Omega}$. The problem reduces to a convex quadratic program whenever $\boRes=0$.
Note that we do not need to know $\boRes$ in advance. An approximation of $\boRes$ is a by-product of our method. In case the problem is not feasible, that is $\boRes>0$, the solution of \eqref{eqn:mainQP} in an optimal interior least-squares solution. We write $\bx^\star$ for an arbitrary minimizer of \eqref{eqn:mainQP}.

The motivation for considering this problem is that a convex  quadratic program (CQP) in standard form can be reformulated as \eqref{eqn:mainQP} provided that an upper bound for the infinity norm of a solution is known.
Let us consider the following CQP in standard form \cite[eq. (1.22)]{Wright}
\begin{equation}
	\tag{\stQP}
\begin{aligned}
	\operatornamewithlimits{min}_{\tbx\in\R^n}& & &\tilde{q}(\tbx):=\frac{1}{2} \cdot \tbx\t\cdot\tilde{\bQ}\cdot\tbx+\tilde{\bc}\t\cdot\tbx\\
	\text{subject to}& & &\tbA \cdot \tbx = \tilde{\bb}\,,\quad \tbx\geq \bO
\end{aligned}\label{eq:stQP}
\end{equation}
with a minimum-norm solution $\tbx^\star$,  $\tilde{\bQ} \in \R^{n \times n}$ symmetric positive semi-definite, $\tilde\bc\in\R^n$,
$\tilde\bA\in\R^{m \times n}$, $\tilde\bb\in\R^n$. If an upper bound $\pi$ for $\|\tbx^\star\|_\infty$ is known then this problem can be cast to form \eqref{eqn:mainQP} by employing the substitution  $\tbx=0.5\cdot\pi\cdot(\bx+\be)$. In fact,
the above CQP  can be reformulated as \eqref{eqn:mainQP} where $\bA=\pi/2\cdot\tbA$, $\bb=\tbb-\pi/2\cdot\tbA\cdot\be$, $\bQ=0.25\cdot\pi^2\cdot\tbQ$,
$\bc=0.5 \cdot \pi\cdot \tbc + 0.25 \cdot \pi^2 \cdot \tbQ\cdot\be$, where $\be$ denotes the vector of all ones.

Figure~\ref{fig:sandardcqptrafoboxcqp} shows the relation between \eqref{eq:stQP} and \eqref{eqn:mainQP}. For feasible problems follows $\boRes=0$ {and the problem becomes equivalent to a convex quadratic program.}

We also underline that a sharp bound $ \pi$ is not needed and it is enough to use a large $\pi$. In case it is not available, one could successively attempt solving \eqref{eqn:mainQP} for a geometrically growing sequence of trial values for $\pi$. This requires $\cO\big(\log(\|\tbx^\star\|_\infty)\big)$ attempts. The guess for $\pi$ is large enough when the trial solution stays away from the right borders, e.g. $\bx^\star < 0.9\cdot\be$.

\begin{figure}
	\centering
	\includegraphics[width=0.9\linewidth]{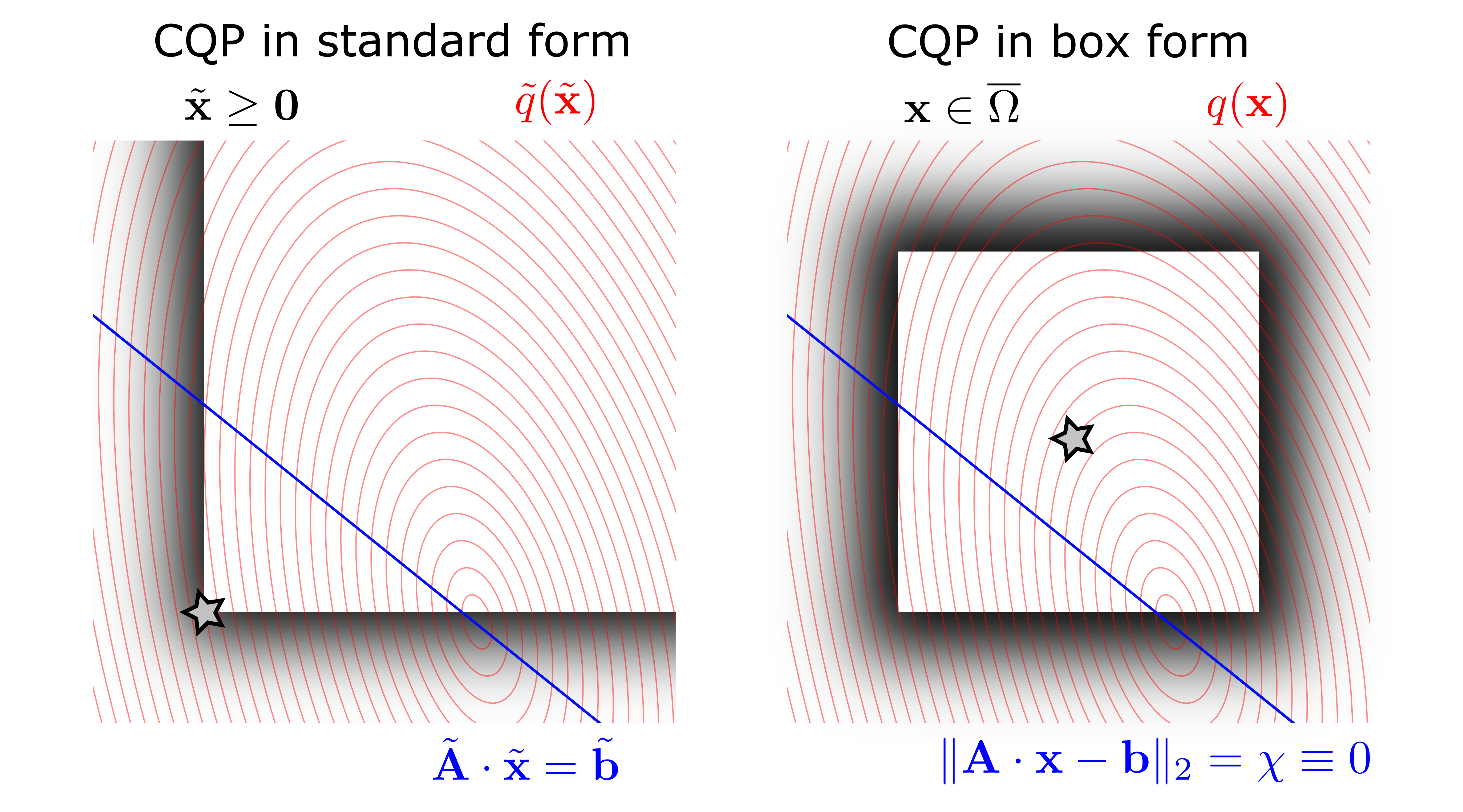}
	\caption{Illustration of transformation of CQP in standard form in two dimensions. The problem is rescaled into a box. Stars mark the positions of the null-vectors in the respective coordinate space. For a suitable rescaling the solutions of both problems coinside. Since the depicted problem has feasible points, it follows $\boRes=0$.}
	\label{fig:sandardcqptrafoboxcqp}
\end{figure}

In this paper we develop an interior point method for problem \eqref{eqn:mainQP}. It consists of an initialization phase, where a point belonging to a small neighbourhood of the central path is generated, and of a short step path-following phase, where the complementarity is iteratively reduced. The involved linear equation systems are regularized, but despite this  the algorithm recovers a solution of the original problem.
In this sense it shares some similarities with the regularized interior point method in \cite{FriedlanderOrban}.
In fact, our method, given a prescribed accuracy   $0<\tol\in\R$,  computes a vector $\bx \in \R^n$ satisfying
\begin{subequations}
	\begin{align}
		\bx &\in \Omega\\
		q(\bx) &\leq q(\bx^\star) + \tol\,,\\
		\|\bA\cdot\bx-\bb\|_2 &\leq \boRes + \tol\,.
	\end{align}\label{eqn:SolutionConditions}
\end{subequations}

For interior point methods it is convenient to define an \textit{input length number} $L$, with respect to which the method achieves weakly polynomial time complexity. $L$ is a positive number with a magnitude in the order of the bits that are needed to store the problem instance. We present an algorithm that can solve every real-valued problem in weakly polynomial time, and real-valued problems cannot be represented in a finite number of bits. This is why instead of an input length we define a \textit{problem factor} $L$ of \eqref{eqn:mainQP} in the following way:
\begin{align}
	L:=\log\big(1+\|\bQ\|+\|\bc\|+\|\bA\|+\|\bb\|\big)+\log(n+m)-\log(\tol)\,,\label{eqn:InputLength}
\end{align}
where $\|\cdot\|$ is an arbitrary vector norm. $L$ is the weak factor with regard to which our algorithm has weakly polynomial time complexity. Since only consisting of logarithms, this factor grows slowly and is thus of practically reasonable size. The exact solution of real-valued convex quadratic problems is NP-hard \cite{478584}. Our algorithm avoids this hardness by not solving exactly but only to a tolerance $\tol>0$. Clearly, the tolerance must appear in $L$ since it brings the problem parametrically close to NP. Since $L$ grows logarithmically in $\tol$, high accuracies can be achieved with our algorithm at reasonable time complexity.

Our method achieves the following desirable goals:
\begin{enumerate}
\item No requirements or assumptions are made on the regularity, rank or conditioning of $\bA,\bQ$, on strict complementarity at $\bx^\star$, on feasibility or on anything else. This method  is free of any assumptions. We underline that typically, assumptions are made on full row-rank of $\tbA$ \cite[p.~31]{Wright}. This assumption can be forced by orthogonalizing the rows of $\tbA$ but this is not feasible when $\tbA$ is large and sparse or unavailable, i.e. it is not compatible with a  matrix-free regime.

\item The input length $L$ does only consist of logarithms of the input data and dimension. Thus, the iteration count of the method is mildly affected by potentially bad scaling of the problem.

\item The feasible initialization of our method does not require augmentation of $\bA,\bQ$. This is desirable for sparsity and simplicity. In other interior point methods, the issue of finding a feasible initial guess is generally treated in either of three ways:  infeasible methods \cite{GondzioIIfIPM,YYMY,Wright,IPM25ylater,POTRA2000281}, a two-phase method \cite[Sec.\,11.4]{Boyd}, or a "big-M"-method \cite{AdlerMonteiro,Gondzio_bigM}.

Infeasible methods assume $\|\tbx_0-\tbx^\star\|_\infty \in \cO(1)$ because this term  appears in the denominator of the line-search step-length, cf. \cite{YYMY} equations (28), (32), (48) and the definition of $\rho,x_0,C_1,C_2,C_3,C_4,C$ in their notation. Note that $\|\tbx_0-\tbx^\star\|_\infty \in \cO(1)$ would require $\pi \in \cO(1)$ unless $\tbx_0$ is a very accurate approximation to $\tbx^\star$. Large values of $\pi$ would quickly blow up the worst-case iteration count, which is a weakness of these methods.

The two-phase method approach exchanges the optimality conditions during the course of iterating. The theory of this approach can be inaccurate when the solution of phase I does not solve the different equations of phase II. The "big-M"-approach requires a user-specified bound for $\pi$, just as we do. (In \cite{AdlerMonteiro} the problem is integer and thus $\pi \leq 4^L$ holds for a different definition of $L$, that is impractical to compute when $\bA$ is large and sparse.) We believe it is best practice to consider \eqref{eqn:mainQP}, as we do.

 Our approach of presuming a bound is further justified in the fact that it is impossible to determine boundedness of $\|\tbx^\star\|_\infty$ for \eqref{eq:stQP} in a numerically robust way. To show this, consider in $n=2$ dimensions the problem given by $\tbQ=\bO,\ \tbc=(-1,0)\t,\ \tbA=[\epsilon,1],\ \tbb=1$ with a small number $\epsilon>0$. The problem appears to be well-scaled, yet the solution is $\tbx^\star=(1/\varepsilon,0)\t$ and its norm is not well-posed with respect to small perturbations of the problem data. Thus, when we consider solving \eqref{eqn:mainQP} instead of \eqref{eq:stQP} then we circumvent the numerical issue of determining boundedness, since by construction we consider a bounded problem. We are aware that for unbounded programs the number of attempts for guessing $\pi$ is infinite. But we also believe that the user is not interested in solutions that exceed a particular value of $\pi$, e.g. $\pi=10^{100}$.

\item The condition numbers of the arising linear systems remain bounded during all iterations of the algorithm. This also holds when the solution is approached -- even for degenerate problems, which is recognized in \cite{Wright,GreifOrban} to be a difficult case. Our method is capable of this because of the regularization of the linear systems and because all iterates stay sufficiently interior and bounded during all iterations.

\item The method has the best known worst-case iteration-complexity, namely it requires $\cO(L \cdot \sqrt{n})$ iterations, cf. \cite{IPM25ylater,AdlerMonteiro}. The time complexity of our method consists of solving $\cO(L \cdot \sqrt{n})$ well-conditioned linear systems of dimension $\leq 3 \cdot n + m$.

\item Our method can also handle infeasible problems. Using the common primal-dual formulation, infeasible problems are known to result in blow-up of the dual variables, cf. \cite[p.~177,~ll.~21--22]{Wright} and the reference \cite{BlowUp} therein. According to \cite[Chap. 9]{Wright} and \cite[p. 411]{Nocedal} there are approaches of exploiting this blow-up for infeasibility detection. However, problems may be infeasible due to floating-point representation. Nevertheless we want to solve them in a robust and meaningful way, i.e. as problem \eqref{eqn:mainQP}.

A further issue for an infeasible CQP can be with the initialization: All interior point methods are iterative, and they first need to construct an initial point that lives in the neighborhood of the central path. If now the path is not well-defined or even non-existent for infeasible problems then the initialization will fail, causing a catastrophic breakdown. To avoid this, a suitable definition of a central path is needed that is also meaningful for infeasible problems. Since we work with the reformulated problem \eqref{eqn:mainQP} we have a suitable definition of the central path that works also in the infeasible case (i.e. $\boRes>0$).

\item In our analysis we considered the effects of numerical rounding errors. We show that our method is numerically stable.

Proving stability appears difficult because the linear systems in the Newton iteration can be badly conditioned. Substantial efforts have been undertaken towards a stability result in earlier works, cf. \cite{FriedlanderOrban,GreifOrban,BeneSimon,Wright95simax}. These are not exhaustive and live from strong assumptions such as regularity, strict complementarity, feasibility and others. However, the sole analysis of the linear equation systems is not sufficient for a proof of the overall stability of the interior point algorithm. Further questions relating to maintenance of strict interiorness and solution accuracy of the final result need to be addressed.

To address these issues we make use of enveloped neighborhoods of the central path and we enrich the path-following procedure with two further Newton steps at each path following iteration. The role of these two steps is to compensate for numerical rounding errors. This way, we make sure that a new point does never move too far away from the central path. We emphasize that there is a trade-off between the design of a cheap iteration and a numerically stable iteration. The two additional Newton steps enable us to provide a complete proof of numerical stability without assuming strict complementarity at the solution, that is a common assumption in papers dealing with numerical stability of interior point methods.

\item Most numerical algorithms do only compute a solution that yields small residual norms for the KKT equations. This holds some uncertainty because small residuals do not imply at all that the numerical solution is accurate in terms of optimality gap and feasibility residual (also with respect to $\boRes$). In contrast to that, our algorithm finds a solution that satisfies \eqref{eqn:SolutionConditions}. Moreover, the tolerances of eventually found optimality gap and feasibility residual can be bounded a-priori by the user by specifying the value of $\tol$. Thus, the algorithm will never return an unsatisfactory solution.
\end{enumerate}

The focus of this paper is on proving the theoretical properties of our method, namely the analysis of worst case time complexity, correctness/ solution accuracy, and numerical stability subject to rounding errors. In addition to that, also the following properties are highly relevant from a practical point of view: linear systems with a condition number that does not blow up, the possibility to detect infeasibility of problems in a quantitative measure $\boRes$, and the ability to solve also problems that are rank-deficient and degenerate. Our analysis in this paper paths also the way for designing long step variants of the method, the development of which is currently under way.

\largeparbreak

We describe the structure of the paper. In Section~2 we describe the numerical method. In Section~3 we prove the correctness of the method. Section~4 gives the stability analysis. Eventually we draw a conclusion.
In Appendices A and B we report proofs of the  results concerning correctness and stability of the method.

\subsection{Notation}
We write $\be$ for a vector of all ones. $\bO$ is a null-matrix/vector and $\bI$ is the identity. The dimension is either clear or mentioned as sub-indices. We write $u^{[j]}$, $1 \leq j \leq d$ for the $j$th component of a vector $\bu \in \R^d$. We write $1/\bu$ for the vector $\opdiag(\bu)\inv \cdot \be\,,$ $\bu \cdot \bv$ for $\opdiag(\bu) \cdot \bv$ and $\bu^2$ for $\bu\cdot\bu$\,, where $\opdiag(\bu)$ denotes the diagonal matrix whose diagonal entries are given by the components of  $\bu $.
The scalar-product instead is $\bu\t\cdot\bv$. We define $\cB_R(\bu):= \lbrace \tbu \in \R^d \ \vert \ \|\bu-\tbu\|_2\leq R\rbrace$, the sphere with radius $R$ around $\bu$. In analogy we use spherical envelopes \mbox{$\cB_R(\cU) := \lbrace \bu \in \R^d \ : \ \text{dist}(\bu,\cU) \leq R \rbrace$} for a set $\cU \subset \R^d$ with radius $R$, where $\text{dist}$ uses the Euclidean metric.

Throughout the paper we use a list of scalar \textit{method parameters} that can be directly computed from the given problem data $\bQ,\bc,\bA,\bb,\tol$. We present them here in one place in Appendix~C. Details on their computation are found also in the appendix. For each method parameter $s \in \R$ it holds $|\log(s)|\in\cO(L)$. This is crucial for our complexity analysis. The method parameters are literal numbers whose values are determined from the moment when the problem instance is given to the solver. We will frequently use various of these numbers in the algorithm, our theorems, equations and our analysis. For the sake of a succinct presentation and in order to avoid redundancies, we will not refer to \ConstantsRef each time a method parameter is used.

\section{The Algorithm}

In this section we describe our short step interior point method for problem \eqref{eqn:mainQP}. Given a prescribed tolerance $\tol>0$, the method computes a numerical solution satisfying \eqref{eqn:SolutionConditions}.

\paragraph{Functions}
To state the algorithm, we first need to introduce the following auxiliary function that we refer to as \textit{primal function}:
\begin{align*}
f(\bx) := &\frac{1}{\tau_A} \cdot \Bigg(q(\bx) + \frac{\omega}{2} \cdot \|\bx\|_2^2 + \frac{1}{2 \cdot \omega} \cdot \|\bA \cdot \bx - \bb\|_2^2 \Bigg)\\
&- \sum_{j=1}^n \left(\log\big(1+x^{[j]}\big)+\log\big(1-x^{[j]}\big)\right) \tageq,\label{eqn:f_primal}
\end{align*}
$\tau_A$ and $\omega$ are method parameters defined in \ConstantsRef. $f$ is a regularized penalty-barrier function. It has the following properties:
\begin{itemize}
	\item $f$ is self-concordant.
	
	\item $f$ has a unique minimizer $\bx_{\infty}$. $f$ is strictly convex since $q$ is convex and due to the additional term $\frac{\omega}{2} \cdot \|\bx\|_2^2$.
	
	\item An accurate guess for $\bx_{\infty}$ is readily available. This is because $\tau_A$ has a large value. If $\tau_A=+\infty$ then $f$ would only be the log-part, whose unique minimizer is $\bx_\infty = \bO$. Now since $\tau_A$ is large it follows that $\bx=\bO$ is very close to the minimizer of $f$.
\end{itemize}
Due to the latter two conditions, Newton's method for minimization finds an accurate minimizer of $f$ in $K$ iterations from the initial guess $\bx_0 = \bO$\,, where $K$ is a method parameter. Typically it holds $\itersPrimal \leq 10$\,, cf. \cite[p. 489]{Boyd}.

Let us also introduce the parametric function $F_\tau : \R^{N} \rightarrow \R^N$ defined as
\begin{align}
F_\tau(\bz) := \begin{pmatrix}
\bQ \cdot \bx + \omega \cdot \bx + \bc - \bA\t \cdot \blambda - \bmu_L + \bmu_R\\
\bA \cdot \bx - \bb + \omega \cdot \blambda\\
\bmu_L \cdot (\be+\bx) - \tau \cdot \be\\
\bmu_R \cdot (\be-\bx) - \tau \cdot \be
\end{pmatrix}\,,\label{eqn:F_KKT}
\end{align}
parametric in $\tau>0$, where $N:=3 \cdot n + m$ and $\bz \equiv (\bx\t,\blambda\t,\bmu_L\t,\bmu_R\t)\t \in \R^{N}$.
We will refer to $F_\tau$ as \textit{optimality function}. It is related to the KKT function used in common primal-dual interior point methods like \cite{IPM25ylater}, but it has some significant differences. It can be derived from the optimality condition $\nabla \varphi_\tau(\bx) = \bO$ for the following parametric penalty-barrier function
\begin{align*}
\varphi_\tau(\bx) = & q(\bx) + \frac{\omega}{2} \cdot \|\bx\|_2^2 + \frac{1}{2 \cdot \omega} \cdot \|\bA \cdot \bx - \bb\|_2^2 \\
&- \tau \cdot \sum_{j=1}^n \left(\log\big(1+x^{[j]}\big)+\log\big(1-x^{[j]}\big)\right)\,.
\end{align*}
In $\nabla \varphi_\tau(\bx) = \bO$ we substituted $\blambda = \frac{-1}{\omega} \cdot (\bA \cdot \bx - \bb)$, $\bmu_L = \tau / (\bx+\be)$ and $\bmu_R = \tau / (\bx-\be)$. $F_\tau$ has the following Jacobian.
\begin{align}
DF(\bz) = \begin{bmatrix}
\bQ + \omega \cdot \bI 	& -\bA\t  			& -\bI 				& \bI				\\
\bA						& \omega \cdot \bI 	& \bO 				& \bO 				\\
\opdiag(\bmu_L)			& \bO 				& \opdiag(\be+\bx) 	& \bO 				\\
-\opdiag(\bmu_R) 		& \bO				& \bO 				& \opdiag(\be-\bx)	
\end{bmatrix}\label{eqn:DF_KKT}
\end{align}
Notice that we dropped the foot-index $\tau$ from $DF_\tau$ since it does not depend on $\tau$.

Solving a minimization problem with $\varphi_\tau$ for a decreasing sequence of $\tau$ allows to achieve the following goals:
\begin{itemize}
	
	\item Since $\varphi_\tau$ is strictly convex, it has a unique global minimizer for each value $\tau>0$.
	
	\item The penalty term $\frac{1}{2 \cdot \omega} \cdot \|\bA \cdot \bx - \bb\|_2^2$ is very large. Thus, minimization of $\varphi_\tau$ will approximately (but in the right numerical scales of accuracy) imply to minimize $q(\bx)$ subject to minimality of $\|\bA \cdot \bx - \bb\|_2$.
	
	\item A sequence of sufficiently accurate minimizers of $\varphi_\tau$ for a decreasing sequence of values $\tau$ can be computed in a primal-dual path-following framework in an efficient computational complexity.
	
	\item Finally, the convex term $\frac{\omega}{2} \cdot \|\bx\|_2^2$ makes sure that the conditioning of $DF$ is bounded at all iterates $\bz$ that appear during the execution of the algorithm.
	
\end{itemize}

\paragraph{Spaces} Interior point methods are based on the concept of a central path \cite{Wright}. We define the following $\tau$-parametric spaces, that define a neighborhood of the central path.
\begin{align*}
\cN(\tau) := \Big\lbrace \bz \in \R^N\,\Big\vert \quad & F_\tau(\bz)=(\bO_n,\bO_m,\br\hp{3},\br\hp{4})\,,\\
& \|(\br\hp{3},\br\hp{4})\|_2\leq \theta \cdot\tau\,,\\
& \|\bx\|_\infty<1\,,\ \bmu_L,\bmu_R>\bO \quad\quad\Big\rbrace\tageq\,.
\end{align*}
$\theta$ is a method parameter that determines the width of the neighborhood from the central path. If $\theta=0$ would hold then the spaces would only consist of those points that live precisely on the central path itself. Besides $\cN(\tau)$ we also define the spaces $\cN_h(\tau)$. These differ from $\cN(\tau)$ only in the detail that they use half the width for the neighborhood:
\begin{align*}
\cN_h(\tau) := \Big\lbrace \bz \in \R^N\,\Big\vert \quad & F_\tau(\bz)=(\bO_n,\bO_m,\br\hp{3},\br\hp{4})\,,\\
& \|(\br\hp{3},\br\hp{4})\|_2\leq 0.5 \cdot \theta \cdot\tau\,,\\
&\|\bx\|_\infty<1\,,\ \bmu_L,\bmu_R>\bO \quad\quad\Big\rbrace\tageq\,.
\end{align*}

When it comes to numerical roundoff, neither of the above spaces is practically useful. This is because there may be no digitally representable number for $\bz$ that solves $F_\tau(\bz)=(\bO_n,\bO_m,\br\hp{3},\br\hp{4})$\,. This is why we also introduce \textit{enveloped neighborhood spaces}. These are the above spaces $\cN,\cN_h$ plus an envelope $\nu\geq 0$:
\begin{align*}
	\cN(\tau,\nu) &:= \Big\lbrace \tbz \in \R^N \ \Big\vert \ \exists \bz\in \cN(\tau) \ : \ \|\tbz - \bz\|_2 \leq \nu \Big\rbrace \equiv \cB_\nu\Big(\,\cN(\tau)\,\Big)\\
	\cN_h(\tau,\nu) &:= \Big\lbrace \tbz \in \R^N \ \Big\vert \ \exists \bz\in \cN_h(\tau) \ : \ \|\tbz - \bz\|_2 \leq \nu \Big\rbrace \equiv \cB_\nu\Big(\,\cN_h(\tau)\,\Big)
\end{align*}

\paragraph{Working strategy of the algorithm}
Now we have all the ingredients at hand to describe the proposed algorithm. It is stated in Alg.\,\ref{Algo:IPM}. The algorithm consists of three sections: Initialization, path-following, and termination. Corresponding to the crucial steps of the algorithm we added some comments explaining the role of these steps. We will refer to these comments in the theoretical analysis of the method.

The initialization has two parts: In the first part we start with a primal Newton iteration to find an approximate minimizer $\bx_K$ of $f$. In line 9 we find that this approximate minimizer satisfies an error-bound. In the second part a vector $\cbz$ is computed from the approximate minimizer. We find that $\cbz$ lives in a special neighborhood space. In the third part $\cbz$ is refined with a modified primal-dual Newton step. From this Newton step we obtain $\bz \in \cN(\tau_A,\nu_0)$, which is a suitable vector to begin the path-following with.

The path-following is a loop that computes a vector $\bz \in \cN(\tau_E,\nu_0)$ by iteratively computing refinements for $\bz$ which live in spaces $\cN(\tau,\nu)$ for geometrically decreasing values of $\tau$ and bounded values for $\nu$. The loop consists of three parts: First, a path-step is performed in order to update the primal-dual iterate so that $\tau$ decreases. Afterwards, a centrality step is performed to move from the neighborhood $\cN$ into $\cN_h$. Finally, an error-reset step is computed. This decreases the value of $\nu$ so that it always remains bounded. In particular, while the first two steps may lead to growth of $\nu$, the third step reduces it below a method parameter $\nu_0$\,.

We emphasize that our algorithm is designed so that it also works when there are numerical rounding errors. If there were no rounding errors then it would hold $\nu_0=\nu_1=0$ and the computation of $\Delta\bz_2,\Delta\bz_3$ could be replaced by $\Delta\bz_2=\bO, \Delta\bz_3=\bO$. That is, when numerical stability is not a concern then one can opt for solving only one linear system per path-following iteration, as in classical methods.

In the termination we have a primal-dual iterate that lives in a special neighborhood space. We will give a result which says that members of these spaces have a component $\bx$ which satisfies \eqref{eqn:SolutionConditions}.

\paragraph{Complexity}
Our method has to perform $\itersPrimal$ iterations for the initialization phase and $\itersPrimalDual$ iterations of the path-following phase, where $\itersPrimal$ and $\itersPrimalDual$ are {method parameters}.
In absence of rounding errors one Newton system has to be solved at each path-following iteration.

We find that the time complexity of the method is bounded by solving $\itersPrimal+\cO(\itersPrimalDual)$ linear systems of dimension $\leq N$\,. We find that the following complexities hold:
\begin{align*}
\itersPrimal &= \Big\lceil \log_2\big( \underbrace{1 + \log_2(\overbrace{C_{Hf}/\rho}^{>1})}_{\leq  C_{Hf}/\rho} \big) \Big\rceil \in \cO(L)\\
\itersPrimalDual &= \bigg\lceil \overbrace{\big( \log(\tau_A) - \log(\tau_E) \big)}^{> 0} \cdot \overbrace{\frac{-1}{\log(1-\beta/\sqrt{2\cdot n})}}^{>0}\bigg\rceil\\
& \le  \bigg\lceil \underbrace{\big( \log(\tau_A) - \log(\tau_E) \big)}_{\in \cO(L)} \cdot  1/\beta \cdot \sqrt{2 \cdot n} \bigg\rceil\in \cO(L \cdot \sqrt{n})
\end{align*}
where all the constants involved are given in \ConstantsRef and their logarithms are in the order of $L$. In the bound for $\itersPrimal$ we used $\log(C_{Hf}/\rho) \in \cO( L )$\,.
In result, the algorithm's iteration complexity is
$$\cO\big(L \cdot \sqrt{n}\big)\,.$$
The required memory depends on how the linear equation systems are solved. If matrix-free methods are used then only the vectors $\bx,\bz$ of size $\cO(N)$ must be kept in memory.

\begin{algorithm}
\normalsize{
\begin{algorithmic}[1]
\Procedure{Solver}{$\bQ,\bc,\bA,\bb,\tol$}
	\State Compute all the method parameters from Appendix C.\vspace{2mm}
	\State \textit{// - - - Initialization - - -}
	\State $\bx_0 := \bO \in \R^{n}$
	\For{$k=1,...,\itersPrimal$}
		\State Solve $\nabla^2f(\bx_{k-1}) \cdot \Delta\bx_k = -\nabla f(\bx_{k-1})$ for $\Delta\bx_k$\label{AlgLine:Newton1}
		\State $\bx_k := \bx_{k-1} + \Delta\bx_k$
	\EndFor\label{AlgLine:MinNewtonEnd}
	\State \textit{// $\|\bx_{\itersPrimal} - \bx_\infty\|_2 \leq 3 \cdot \rho$}\label{Algo:LineL:PostPrimalIter}\vspace{2mm}
	\State $\cbx := \bx_{\itersPrimal},\quad\cblambda:=-1/\omega\cdot(\bA\cdot\bx_{\itersPrimal}-\bb)$
	\State $\cbmu_L:=\tau_A/(\be+\bx_{\itersPrimal})$,\quad$\cbmu_R:=\tau_A/(\be-\bx_{\itersPrimal})$
	\State $\cbz := (\cbx,\cblambda,\cbmu_L,\cbmu_R)$ \quad \textit{// $\cbz \in \cN_h(\tau_A,\nu_2) \subset \cN(\tau_A,\nu_2)$}\vspace{2mm}
	\State $(\br\hp{1},\br\hp{2},\br\hp{3},\br\hp{4}) := F_\htau(\cbz)$ \Comment{\textit{error-reset step}}
	\State Solve $DF(\cbz) \cdot \Delta\cbz = -(\br\hp{1},\br\hp{2},\bO_n,\bO_n)$ for $\Delta\cbz$\,.\label{AlgLine:Newton2}
	\State $\bz := \cbz + \Delta\cbz$\quad \textit{// $\bz \in \cN(\tau_A,\nu_0)$} \label{AlgLine:EndIniGuess}\label{AlgLine:AugmentationEnd} \label{AlgLine:zInNtauA}\vspace{2mm}
	\State \textit{// - - - Path-following - - -}
	\State $\tau:=\tau_A$
	\For{$k=1,...,\itersPrimalDual$}
		\State $\bz_0 := \bz$,\quad $\htau := \sigma \cdot \tau$\quad \textit{// $\bz_0 \in \cN(\tau,\nu_0) \subset \cN(\tau,\nu_2)$}\vspace{2mm}
		\State Solve $DF(\bz_0) \cdot \Delta\bz_1 = -F_\htau(\bz_0)$ for $\Delta\bz_1$\,.\Comment{\textit{path step}}\label{AlgLine:Newton3a}
		\State $\bz_1 := \bz_0 + \Delta\bz_1$ \quad\quad  \textit{// $\bz_1 \in \cN(\htau,\nu_1) \subset \cN(\htau,\nu_2)$}\vspace{2mm}
		\State Solve $DF(\bz_1) \cdot \Delta\bz_2 = -F_\htau(\bz_1)$ for $\Delta\bz_2$\,.\Comment{\textit{centrality step}}\label{AlgLine:Newton3b}
		\State $\bz_2 := \bz_1 + \Delta\bz_2$ \quad\quad  \textit{// $\bz_2 \in \cN_h(\htau,\nu_2) \subset \cN(\htau,\nu_2)$}\vspace{2mm}
		\State $(\br\hp{1},\br\hp{2},\br\hp{3},\br\hp{4}) := F_\htau(\bz_2)$ \Comment{\textit{error-reset step}}
		\State Solve $DF(\bz_2) \cdot \Delta\bz_2 = -(\br\hp{1},\br\hp{2},\bO_n,\bO_n)$ for $\Delta\bz_3$\,.\label{AlgLine:Newton3c}
		\State $\bz_3 := \bz_2 + \Delta\bz_3$ \quad\quad\textit{// $\bz_3 \in \cN(\htau,\nu_0)$}\vspace{2mm}
		\State $\tau:=\htau$,\quad$\bz:= \hbz_3$
		\If{$\htau \leq \tau_E$}
			\State \textbf{break for-loop}
		\EndIf
	\EndFor\vspace{2mm}
	\State \textit{// - - - Termination - - -}
	\State \textit{\textit{// $\bz \equiv (\bx,\blambda,\bmu_L,\bmu_R)\in\cN(\tau,\nu_0)$, where $\tau \leq \tau_E$}}
	\State \Return $\bx$
\EndProcedure
\end{algorithmic}
}
\caption{Interior point Method}\label{Algo:IPM}
\end{algorithm}

\section{Proof of correctness}

We prove that, {in absence of rounding errors,} Algorithm~\ref{Algo:IPM} returns a solution $\bx$ which satisfies \eqref{eqn:SolutionConditions}. To this end we spend one subsection of text for each section of the algorithm.
In order to improve readability of the paper, most of the proofs will be given in appendix A.

We recall that in absence of rounding errors the algorithm simplifies and we do not need to  compute $\Delta\bz_2,\Delta\bz_3$, i.e. we can set $\Delta\bz_2:=\bO$, $\Delta\bz_3:=\bO$\,. Further $\nu_0=\nu_1=\bO$.

We emphasize that in Section~\ref{sec:NumStab} we show the more general result: namely, that Algorithm~\ref{Algo:IPM} returns a solution $\bx$ which satisfies \eqref{eqn:SolutionConditions} even despite numerical rounding errors.

\subsection{Initialization}
We begin with the first part of the first section, i.e. lines 4--9\,. Let us point out the following properties: $f$ is self-concordant. It has the following gradient
\begin{align*}
	\nabla f(\bx) =& \frac{1}{\tau_A} \cdot \Bigg( \bQ \cdot \bx + \omega \cdot \bx + \bc - \bA\t \cdot \frac{-1}{\omega}\cdot(\bA \cdot \bx - \bb) \Bigg)\\
	&\quad- \Bigg( \frac{1}{\be+\bx} - \frac{1}{\be-\bx} \Bigg)
\end{align*}
and the Hessian
\begin{align*}
	\nabla^2 f(\bx) = \underbrace{\frac{1}{\tau_A} \cdot \Big( \bQ + \omega\cdot\bI + \frac{1}{\omega} \cdot \bA\t\cdot\bA\Big)}_{\text{term 1}} +\underbrace{\vphantom{\frac{1}{\tau_A}} \opdiag(\be+\bx)^{-2} + \opdiag(\be-\bx)^{-2}}_{\text{term 2}}\,.
\end{align*}
Due to the large value $\tau_A$ we find that the 2-norm of term 1 is bounded by $4$. If $\bx \in \cB_{0.5}(\bx_0)$ then we obtain the upper bound $6$ for the 2-norm of term 2 because then each diagonal entry lives in the interval $[0.5\,,\,1.5]$ as $\bx_0=\bO$. It follows
\begin{align}
\|\nabla^2 f(\bxi)\|_2 \leq C_{Hf} \quad\quad \forall \bxi \in \cB_{0.5}(\bx_0)\label{eqn:boundsup_hf}
\end{align}
where $C_{Hf}$ is a method parameter. At $\bx=\bO$ we find
\begin{align*}
	\|\nabla f(\bO)\|_2 = \frac{1}{\tau_A} \cdot \left\| \bc - \frac{1}{\omega} \cdot \bA\t\cdot\bb \right\|_2 < 0.25
\end{align*}
due to the large value of $\tau_A$. Due to the logarithmic terms it further holds $\bI \matleq \nabla^2 f(\bxi)\quad \forall \bxi \in {\Omega}$.

Consider the following result.
\begin{boxThm}[Newton's method for minimization]\label{Thm:NewtonMinimization}
	Given a self-concordant function $\MinFun : \dom(\MinFun)\subset \R^d \rightarrow \R$, $\dom(\MinFun)$ open, $\bu_0 \in \dom(\MinFun)$ and $\rho\in(0,1)$. Let $\MinFun$ satisfy $\bI \matleq \nabla^2 \MinFun(\tbu)$ and $\|\nabla^2 \MinFun(\tbu)\|_2\leq C$ for all $\tbu \in \cB_{0.5}(\bu_0)$ and let $\|\nabla \MinFun(\bu_0)\|_2 < 0.25$\,. Let $\bu_\infty$ be the unique global minimizer of $\MinFun$. Define
	\begin{align}
	K := \left\lceil \log_2\big(1-\log_2(\rho/C)\big)  \right\rceil\label{eqn:K}
	\end{align}
	and the sequence $\lbrace \bu_k \rbrace_{k \in \N} \subset \dom(\MinFun)$ recursively as
	\begin{align*}
	\bu_k := \bu_{k-1} - \big(\nabla^2 \MinFun(\bu_{k-1})\big)\inv \cdot \nabla \MinFun(\bu_{k-1})\quad \forall k \in \N\,.
	\end{align*}
	Then it holds
	\begin{subequations}
		\begin{align}
		\MinFun(\bu_k) &\leq \MinFun(\bu_\infty) + \rho & &\forall k \geq K\,,\label{eqn:Thm:NewtonMinimization:1}\\
		\|\nabla \MinFun(\bu_k)\|_2 &\leq \rho & &\forall k \geq K\,,\label{eqn:Thm:NewtonMinimization:2}\\
		\|\bu_0 - \bu_k\|_2 &\leq 0.5 & &\forall k \in \N\cup\lbrace\infty\rbrace\,,\label{eqn:Thm:NewtonMinimization:3}\\
		\|\bu_\infty - \bu_k\|_2 &\leq 2 \cdot \rho & &\forall k \geq K\,.\label{eqn:Thm:NewtonMinimization:4}
		\end{align}
	\end{subequations}
	\begin{proof} Appendix A.\suppressqed\end{proof}
\end{boxThm}

We use this result to analyze the primal iteration in Alg~\ref{Algo:IPM} lines 5--8\,. To this end we insert $\MinFun = f$, $\bu_0 = \bx_0 {=\bO,\bu_\infty=\bx_\infty }$. We obtain the sequence $\bu_k \equiv \bx_k$, $k=0,...,\itersPrimal$\,. We showed above that the requirements of the theorem are satisfied with $C = C_{Hf}$. For $\rho$ we use the value defined in \ConstantsRef. Then, we obtained for $\itersPrimal$ the value defined in \eqref{eqn:K}.

The theorem then guarantees that the proposition in Alg.~\ref{Algo:IPM} line 9 holds true. In fact it shows the sharper bound $\|\bx_K-\bx_\infty\|_2 \leq 2 \cdot \rho$. But since later we will also consider numerical rounding, we added a margin $\rho$ to the bound.
\largeparbreak

The second part of the initialization consists of Alg.~\ref{Algo:IPM} lines 10--12\,. A primal-dual vector $\cbz$ is computed from $\bx_{\itersPrimal}$\,. After substitution, we have $F_{\tau_A}(\cbz) =\big(\,\nabla f(\bx_{\itersPrimal}),\bO,\bO,\bO\,\big)$.

If $\bx_{\itersPrimal} = \bx_\infty$ was true, then obviously $F_{\tau_A}(\cbz) = \bO$ held by construction, i.e. $\cbz \in \cN(\tau_A)$ would follow. But, $\|\bx_{\itersPrimal} - \bx_\infty\|_2 \leq 3 \cdot \rho$ and $\|\bx_\infty\|_2\leq 0.5$ hold true {because of \eqref{eqn:Thm:NewtonMinimization:3}--\eqref{eqn:Thm:NewtonMinimization:4} and because of our choice of $\bx_0$,} from which the distance in 2-norm of $\cbz$ to $\cN(\tau_A)$ can be bounded as stated in the following Lemma, where $\nu_2$ is a method parameter.

\begin{lem}\label{lem:Bound_cbz}
	Let $\bx_{\itersPrimal}$ satisfy $\|\bx_{\itersPrimal}-\bx_{\infty}\|_2\leq 3 \cdot \rho$. Define $\cbz$ according to Alg.~\ref{Algo:IPM} lines~10--12\,. Then it holds:
	\begin{align*}
		\cbz \in \cN_h(\tau_A,\nu_2)
	\end{align*}
\end{lem}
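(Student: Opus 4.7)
The plan is to produce an explicit point $\bz^\star \in \cN_h(\tau_A)$ (in fact on the central path itself, with zero residual) and to bound $\|\cbz - \bz^\star\|_2$ blockwise by Lipschitz estimates that exploit the fact that both $\bx_K$ and $\bx_\infty$ lie in the ball $\cB_{0.5}(\bO)$.

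First, I would choose the candidate
\begin{align*}
\bz^\star := \Big( \bx_\infty,\ -\tfrac{1}{\omega}(\bA \bx_\infty - \bb),\ \tau_A/(\be+\bx_\infty),\ \tau_A/(\be-\bx_\infty) \Big)
\end{align*}
and verify $\bz^\star \in \cN_h(\tau_A)$. A direct substitution into \eqref{eqn:F_KKT} shows $F_{\tau_A}(\bz^\star) = (\tau_A\,\nabla f(\bx_\infty),\bO,\bO,\bO)$, which vanishes because $\bx_\infty$ is the unique minimizer of the self-concordant $f$. The bound $\|\bx_\infty\|_2 \leq 0.5$ from \eqref{eqn:Thm:NewtonMinimization:3} applied with $\bx_0=\bO$ gives $\|\bx_\infty\|_\infty < 1$, which in turn guarantees that the multiplier components of $\bz^\star$ are strictly positive. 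Hence $\bz^\star$ satisfies all three defining conditions of $\cN_h(\tau_A)$ with residual $\bO$.

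Next, I would bound $\|\cbz-\bz^\star\|_2$ block by block. The primal block is $\|\cbx-\bx_\infty\|_2 \leq 3\rho$ directly by hypothesis. For the dual $\blambda$-block, linearity gives $\|\cblambda-\blambda^\star\|_2 = \tfrac{1}{\omega}\|\bA(\bx_K-\bx_\infty)\|_2 \leq \tfrac{\|\bA\|_2}{\omega}\cdot 3\rho$. For the $\bmu_L$ block, the identity
\begin{align*}
\cbmu_L - \bmu_L^\star = \tau_A\cdot\frac{\bx_\infty-\bx_K}{(\be+\bx_K)\cdot(\be+\bx_\infty)}
\end{align*}
combined with the bounds $1 + x^{[j]} \in [0.5,1.5]$ that hold for both $\bx_K$ and $\bx_\infty$ (again via \eqref{eqn:Thm:NewtonMinimization:3}), yields $\|\cbmu_L-\bmu_L^\star\|_2 \leq 4\tau_A\cdot 3\rho$; the $\bmu_R$ block is symmetric. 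Summing the four blocks gives an estimate of the form $\|\cbz-\bz^\star\|_2 \leq C\cdot \rho$ with $C$ a simple combination of $\|\bA\|_2$, $\omega$, and $\tau_A$, all of which are method parameters.

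Finally, one verifies that the resulting bound is exactly the value $\nu_2$ declared in Appendix~C. There is no real obstacle here; the only subtlety is keeping the Lipschitz constants of the componentwise inversions $\bx \mapsto \tau_A/(\be \pm \bx)$ sharp, which is why the bound $\|\bx_\infty\|_2 \leq 0.5$ (and correspondingly $\|\bx_K\|_2 \leq 0.5 + 3\rho \leq 1$) provided by Theorem~\ref{Thm:NewtonMinimization} is used rather than only the weaker condition $\|\bx\|_\infty < 1$. Since the residual of $\bz^\star$ vanishes, we even obtain the stronger statement $\cbz \in \cN_h(\tau_A,\nu_2)$, as claimed.
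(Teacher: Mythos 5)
Your proposal is correct and follows essentially the same route as the paper's proof: both construct the on-path point built from $\bx_\infty$ (with zero residual, hence in $\cN_h(\tau_A)$), bound $\|\cbz-\bz^\star\|_2$ blockwise with the Lipschitz constants $1$, $\|\bA\|_2/\omega$, and $4\tau_A$ per barrier block, and absorb the total into $\nu_2$ via the definition of $\rho$. The only minor imprecision is that to get $1+x^{[j]}\in[0.5,1.5]$ for $\bx_{\itersPrimal}$ (not just for $\bx_\infty$) you need the sharper bound $\|\bx_\infty\|_2\leq 0.41$ from \eqref{eqn:TighterBound_xInf} so that $\|\bx_{\itersPrimal}\|_2\leq 0.41+3\rho<0.5$, rather than the $\|\bx_{\itersPrimal}\|_2\leq 0.5+3\rho$ you state at the end; the paper uses exactly this refinement.
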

\begin{proof} Appendix A.\suppressqed\end{proof}
\largeparbreak

The final part of the initialization is a primal-dual iteration in lines 13--15, that we refer to as \textit{error-reset step}.
In fact, it is possible to prove that  from $\cbz \in \cN_h(\tau_A,\nu_2)$ this modified Newton step computes a new iterate $\bz \in \cN(\tau_A) \subset \cN(\tau_A,\nu_0)$. This is proved in the subsequent Theorem~\ref{thm:ErrorResetStep} where also a more general result is given.
\subsection{Path-following}
In this section we consider Newton iterations on the primal-dual vector $\bz$. The root-function used within the Newton iteration is $F_\tau$ for suitable values of $\tau$.  In the appendix we provide  technical intermediate results that allow us to prove the following theorem. It states that, starting from a point $\bz\in\cN(\tau)$, one Newton step provides an improved point in $\hbz \in \cN(\htau)$, where $\htau = \sigma \cdot \tau$.

\begin{boxThm}[Path step]\label{thm:Newton-PathStep}
	Let $\bz \in \cN(\tau)$, where $\tau \in [\tau_E,\tau_A]$. Define $\htau := \sigma \cdot \tau$ and solve the linear system
	\begin{align}
		DF(\bz) \cdot \Delta\bz = -F_\htau(\bz)\,.
	\end{align}
	Compute $\hbz := \bz + \Delta\bz$. Then it holds: $\hbz \in \cN(\htau)$\,.
	\begin{proof} Appendix A.\suppressqed\end{proof}
\end{boxThm}

\subsection{Termination}
The focus of this subsection is on showing the following result, that is relevant for the vector $\bz$ in Alg.~\ref{Algo:IPM} line~33\,. $\nu_0,\tau_E$ are method parameters. For ease of presentation we ignore the envelope, i.e. we consider $\cN(\tau)$ instead of $\cN(\tau,\nu_0)$. We come back to the latter at the end of the subsection.
\begin{boxThm}[$\tol$-accurate solution]\label{thm:tol-accurate_Solution}
	Let $\bz \equiv (\bx,\blambda,\bmu_L,\bmu_R) \in \cN(\tau)$, where $0<\tau \leq \tau_E$. Then $\bx$ satisfies \eqref{eqn:SolutionConditions}\,.
\end{boxThm}
The theorem shows that the final iterate of the path-following phase holds sufficiently accurate values for the numerical solution $\bx$.

The proof is given at the end of the section. It follows from results that involve duality and convexity properties of special solutions to \eqref{eqn:mainQP} and of the following problem:
\begin{align*}\tag{\regQP}\label{eq:regQP}
	&\min_{\bx \in \overline{\Omega}} 			& 		&q_\omega(\bx):= \frac{\omega}{2} \cdot \|\bx\|_2^2 + q(\bx) + \frac{1}{2 \cdot \omega} \cdot \|\bA \cdot \bx - \bb\|_2^2
\end{align*}
We call the unique minimizer of this convex problem $\bx_\omega^\star$\,.

The following result shows a relation between $\bx^\star$ and $\bx_\omega^\star$. The result holds in particular due to the small value of the method parameter $\omega$.
\begin{lem}\label{lem:relation_QP_regQP}
	Consider the minimizer $\bx^\star$ of problem \eqref{eqn:mainQP} and the unique minimizer $\bx_\omega^\star$ of \eqref{eq:regQP}. It holds:
	\begin{align*}
		q(\bx^\star_\omega) & \leq q(\bx^\star) + \frac{\tol}{2}\\
		\|\bA\cdot\bx_\omega^\star - \bb\|_2 & \leq \boRes + \frac{\tol}{2}
	\end{align*}
\end{lem}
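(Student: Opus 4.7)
The plan is to exploit the optimality of $\bx^\star_\omega$ for \eqref{eq:regQP}. Since $\bx^\star$ is admissible in \eqref{eq:regQP} (as $\bx^\star \in \overline{\Omega}$), one immediately has $q_\omega(\bx^\star_\omega) \leq q_\omega(\bx^\star)$, i.e.
$$\tfrac{\omega}{2}\|\bx^\star_\omega\|_2^2 + q(\bx^\star_\omega) + \tfrac{1}{2\omega}\|\bA\bx^\star_\omega - \bb\|_2^2 \ \leq\ \tfrac{\omega}{2}\|\bx^\star\|_2^2 + q(\bx^\star) + \tfrac{1}{2\omega}\|\bA\bx^\star - \bb\|_2^2.$$
By feasibility of $\bx^\star$ in \eqref{eqn:mainQP}, the last right-hand term equals $\boRes^2/(2\omega)$, and $\|\bx^\star\|_2^2 \leq n$ since $\|\bx^\star\|_\infty \leq 1$. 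This is the master inequality from which both claims will be extracted.

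For the optimality gap: since $\bx^\star_\omega \in \overline{\Omega}$, the definition of $\boRes$ yields $\|\bA\bx^\star_\omega - \bb\|_2 \geq \boRes$, so the penalty term on the left dominates the matching term on the right and can be dropped. Discarding also the nonnegative quantity $\tfrac{\omega}{2}\|\bx^\star_\omega\|_2^2$, the master inequality collapses to $q(\bx^\star_\omega) - q(\bx^\star) \leq \omega n /2$. The method parameter $\omega$ in \ConstantsRef is chosen small enough that $\omega n \leq \tol$, which gives the first claim.

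For the residual bound: let $d := \|\bA\bx^\star_\omega - \bb\|_2$. Returning to the master inequality and moving $q(\bx^\star_\omega)$ to the right, one gets
$$\tfrac{d^2 - \boRes^2}{2\omega} \ \leq\ \tfrac{\omega n}{2} + \bigl(q(\bx^\star) - q(\bx^\star_\omega)\bigr).$$
Since $q$ is continuous on the compact set $\overline{\Omega}$, a uniform bound $q(\bx^\star) - q(\bx^\star_\omega) \leq C_q$ holds with $C_q$ expressible through $\|\bQ\|, \|\bc\|, n$ (e.g.\ $C_q \leq \|\bQ\| n + 2\|\bc\|\sqrt{n}$ via Cauchy--Schwarz on $\overline{\Omega}$). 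Hence $d^2 - \boRes^2 \leq \omega^2 n + 2\omega C_q$. The parameter $\omega$ is calibrated in \ConstantsRef so that this right-hand side is at most $\tol^2/4$; combined with the elementary inequality $\sqrt{a^2 + b^2} \leq a + b$ for $a,b \geq 0$, this yields $d \leq \sqrt{\boRes^2 + \tol^2/4} \leq \boRes + \tol/2$, which is the second claim.

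The main obstacle is bookkeeping: one must verify that the value of $\omega$ fixed in \ConstantsRef is \emph{simultaneously} small enough to enforce both $\omega n \leq \tol$ (used for the objective) and $\omega^2 n + 2\omega C_q \leq \tol^2/4$ (used for the residual). Because $\omega$ is built polynomially from the quantities whose logarithms appear in $L$, both requirements hold by construction, so the verification is routine but must be stated explicitly. Everything else is elementary algebra starting from $q_\omega(\bx^\star_\omega) \leq q_\omega(\bx^\star)$.
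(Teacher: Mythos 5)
Your proposal is correct and follows essentially the same route as the paper: both start from $q_\omega(\bx^\star_\omega)\leq q_\omega(\bx^\star)$, extract the objective bound by dropping the regularization term and cancelling the penalty term against its lower bound $\boRes^2/(2\omega)$, and extract the residual bound by isolating the penalty term, bounding $q(\bx^\star)-q(\bx^\star_\omega)$ via $C_q$ and $\|\bx^\star\|_2^2\leq n$, and finishing with the subadditivity of the square root. The only differences are in constant bookkeeping (the paper bounds the $q$-difference by $2C_q$ and aims for $\tol/4$ rather than $\tol/2$ in each term), and the stated choice of $\omega$ in Appendix~C indeed satisfies both of your calibration requirements.
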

\begin{proof}
Due to optimality of $\bx^\star_\omega$ for \eqref{eq:regQP} and since both $\bx^\star,\bx^\star_\omega \in \overline{\Omega}$, it holds $q_\omega(\bx^\star_\omega)\leq q_\omega(\bx^\star)$\,. From there both propositions can be shown. Further details are given in Appendix A.
\end{proof}

We then introduce a result that relates the primal-dual vector $\bz$, computed within Alg.~\ref{Algo:IPM}, to $\bx_\omega^\star$\,. In the result below, $C_q$ is a method parameter.
\begin{lem}[$\varepsilon$-optimality]\label{thm:EpsOptimality}
	Let $\bz \equiv(\bx,\blambda,\bmu_L,\bmu_R) \in \cN(\tau)$. Then:
	\begin{align*}
		q_\omega(\bx) \leq q_\omega(\bx_\omega^\star) + \varepsilon\,,\tageq\label{eqn:DefEpsOpt}
	\end{align*}
	where
	\begin{align*}
		\varepsilon = 3 \cdot n \cdot \tau\,.
	\end{align*}
\end{lem}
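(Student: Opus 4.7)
The plan is to mimic the standard primal--dual duality-gap argument, but carried out for the regularized objective $q_\omega$ rather than for $q$, and with the usual exact centrality replaced by the $\theta\tau$-slack permitted inside $\cN(\tau)$.

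First I would unpack what $\bz\in\cN(\tau)$ tells us. Because $F_\tau(\bz)=(\bO_n,\bO_m,\br^{(3)},\br^{(4)})$, the top two block equations of \eqref{eqn:F_KKT} read
$\bQ\bx+\omega\bx+\bc-\bA\t\blambda-\bmu_L+\bmu_R=\bO$ and $\bA\bx-\bb+\omega\blambda=\bO$. Eliminating $\blambda=-\tfrac{1}{\omega}(\bA\bx-\bb)$ from the first using the second, the left-hand side becomes exactly $\nabla q_\omega(\bx)$, so
\begin{equation*}
\nabla q_\omega(\bx)=\bmu_L-\bmu_R.
\end{equation*}
The bottom two blocks give the perturbed complementarity $\bmu_L\cdot(\be+\bx)=\tau\be+\br^{(3)}$ and $\bmu_R\cdot(\be-\bx)=\tau\be+\br^{(4)}$, with $\|(\br^{(3)},\br^{(4)})\|_2\le\theta\tau$, and $\bz\in\cN(\tau)$ also supplies the sign information $\bmu_L,\bmu_R>\bO$, $\be+\bx>\bO$, $\be-\bx>\bO$.

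Next I would invoke convexity of $q_\omega$:
\begin{equation*}
q_\omega(\bx)-q_\omega(\bx_\omega^\star)\le\nabla q_\omega(\bx)\t(\bx-\bx_\omega^\star)=(\bmu_L-\bmu_R)\t(\bx-\bx_\omega^\star).
\end{equation*}
Rewriting $\bx-\bx_\omega^\star=(\be+\bx)-(\be+\bx_\omega^\star)$ in the $\bmu_L$-term and $-(\bx-\bx_\omega^\star)=(\be-\bx_\omega^\star)-(\be-\bx)$ in the $\bmu_R$-term, and using $\bmu_L,\bmu_R\ge\bO$ together with $\be\pm\bx_\omega^\star\ge\bO$ (which follows from $\bx_\omega^\star\in\overline{\Omega}$), we can drop the two non-negative cross terms to obtain
\begin{equation*}
q_\omega(\bx)-q_\omega(\bx_\omega^\star)\le\bmu_L\t(\be+\bx)+\bmu_R\t(\be-\bx).
\end{equation*}

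Finally, substituting the perturbed complementarity equations componentwise gives $\bmu_L\t(\be+\bx)+\bmu_R\t(\be-\bx)=2n\tau+\be\t\br^{(3)}+\be\t\br^{(4)}$, and Cauchy--Schwarz bounds the error by $\sqrt{2n}\,\|(\br^{(3)},\br^{(4)})\|_2\le\sqrt{2n}\,\theta\tau$. The conclusion $\varepsilon=3n\tau$ then holds provided the method parameter $\theta$ from \ConstantsRef satisfies $\sqrt{2n}\,\theta\le n$, equivalently $\theta\le\sqrt{n/2}$; checking that the chosen $\theta$ (which is a small absolute constant) indeed makes $\sqrt{2n}\,\theta\tau\le n\tau$ is the only place where a specific numerical value needs to be cited, and this is the step I expect to require the most care—everything else is forced by convexity and the definition of $\cN(\tau)$.
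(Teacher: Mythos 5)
Your proof is correct, but it reaches the conclusion by a different route than the paper. The paper sets up an explicit primal--dual pair of quadratic programs \eqref{eqn:QP_P}/\eqref{eqn:QP_D}, proves a general duality-gap identity (Theorem~\ref{Thm:OptimalityGap}: under the gradient condition, $\varphi(\bu)-\psi(\bv)=\bv\t(\bd-\bC\bu)$, established via strong duality through the equivalence of the two KKT systems), and then instantiates it with $\bH=\omega\bI+\bQ+\tfrac{1}{\omega}\bA\t\bA$, $\bC=[-\bI;\,\bI]$, $\bd=(\be;\be)$ to conclude that $\bx$ is $\varepsilon$-optimal with $\varepsilon=\bmu_L\t(\be+\bx)+\bmu_R\t(\be-\bx)$. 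You instead apply the first-order convexity inequality $q_\omega(\bx)-q_\omega(\bx_\omega^\star)\le\nabla q_\omega(\bx)\t(\bx-\bx_\omega^\star)$ together with the identity $\nabla q_\omega(\bx)=\bmu_L-\bmu_R$ and the sign information $\bmu_L,\bmu_R>\bO$, $\be\pm\bx_\omega^\star\ge\bO$ to drop the cross terms; this lands on exactly the same quantity $\bmu_L\t(\be+\bx)+\bmu_R\t(\be-\bx)$ without ever constructing the dual program. Your argument is more elementary and self-contained (weak duality for a QP is, after all, a convexity statement), whereas the paper's detour buys the general duality machinery it reuses elsewhere in the termination analysis. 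Your final numerical step is also slightly sharper: you get $2n\tau+\sqrt{2n}\,\theta\tau$ where the paper bounds the $\ell_1$-norm more crudely to get $2n(1+\theta)\tau$; both are below $3n\tau$ since $\theta\le 0.3$, and your check $\theta\le\sqrt{n/2}$ indeed holds for all $n\ge 1$.
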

\begin{proof}The proof makes use of a technical duality result. It is given in appendix A.\suppressqed\end{proof}

{As the following result shows, the distance of $\bx^\star_\omega$ and $\bx$ can be bounded in terms of a value for $\varepsilon$ that satisfies \eqref{eqn:DefEpsOpt}.}
\begin{lem}\label{lem:OptimErr_OptimGap}
	Let $\bx \in \overline{\Omega}$ satisfy $q_\omega(\bx) \leq q_\omega(\bx^\star_\omega) + \varepsilon$ for some $\varepsilon\geq 0$. Then:
	\begin{align*}
		\|\bx^\star_\omega - \bx\|_2 \leq \sqrt{\frac{\varepsilon}{\omega}}
	\end{align*}
\end{lem}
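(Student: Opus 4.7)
The approach I would take is to exploit the strong convexity of $q_\omega$. Observe that $q_\omega$ is a quadratic function with constant Hessian
\[
\nabla^2 q_\omega \;=\; \bQ + \omega\cdot\bI + \tfrac{1}{\omega}\cdot\bA\t\cdot\bA \;\succeq\; \omega\cdot\bI,
\]
since $\bQ$ is symmetric positive semi-definite and $\bA\t\cdot\bA$ is positive semi-definite. Thus $q_\omega$ is $\omega$-strongly convex. The standard consequence is a quadratic growth bound of $q_\omega$ around its constrained minimizer $\bx^\star_\omega$, which combined with the hypothesis $q_\omega(\bx) - q_\omega(\bx^\star_\omega)\leq\varepsilon$ will yield the claim.

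Concretely, I would first write the second-order Taylor expansion of $q_\omega$ about $\bx^\star_\omega$, which is exact because $q_\omega$ is quadratic:
\[
q_\omega(\bx) - q_\omega(\bx^\star_\omega) \;=\; \nabla q_\omega(\bx^\star_\omega)\t\cdot(\bx-\bx^\star_\omega) + \tfrac{1}{2}\cdot(\bx-\bx^\star_\omega)\t\cdot \nabla^2 q_\omega \cdot (\bx-\bx^\star_\omega).
\]
Next, since $\bx^\star_\omega$ is the minimizer of $q_\omega$ over the closed convex set $\overline{\Omega}$ and $\bx\in\overline{\Omega}$, the first-order variational inequality gives $\nabla q_\omega(\bx^\star_\omega)\t\cdot(\bx-\bx^\star_\omega)\geq 0$. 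Combining this with the PSD lower bound on the Hessian yields the quadratic growth inequality
\[
q_\omega(\bx) - q_\omega(\bx^\star_\omega) \;\geq\; \tfrac{\omega}{2}\cdot\|\bx-\bx^\star_\omega\|_2^2.
\]
Rearranging with the hypothesis then bounds $\|\bx^\star_\omega-\bx\|_2$ in terms of $\sqrt{\varepsilon/\omega}$, establishing the lemma.

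The main subtlety, which is the one genuinely nontrivial step, is the correct invocation of the first-order optimality condition at $\bx^\star_\omega$: since $\bx^\star_\omega$ may lie on the boundary of $\overline{\Omega}$, one cannot simply use $\nabla q_\omega(\bx^\star_\omega)=\bO$, and must instead rely on the variational inequality together with the fact that $\bx$ is also feasible. Everything else reduces to a direct computation using the exact quadratic expansion and the PSD bound on the Hessian; no further machinery is needed.
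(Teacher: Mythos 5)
Your proposal is correct and takes essentially the same route as the paper: the paper's Theorem~\ref{Thm:DistLeqOptgap} is exactly your exact-quadratic-expansion-plus-variational-inequality argument, giving $\|\bx-\bx^\star_\omega\|_\bH^2\leq 2\cdot\varepsilon$ with $\bH=\nabla^2 q_\omega$ and $\omega\cdot\bI\matleq\bH$, after which the $\bH$-norm is converted to the $2$-norm. Note only that this rearrangement actually yields $\|\bx-\bx^\star_\omega\|_2\leq\sqrt{2\cdot\varepsilon/\omega}$, a factor $\sqrt{2}$ weaker than the stated bound --- a slip your final step shares with the paper's own proof.
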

\begin{proof}
The proof involves a technical convexity result. It is shown in Appendix A.
\suppressqed
\end{proof}

By definition of $\tau_E$, for $\tau \leq \tau_E$ and $\varepsilon=3 \cdot n \cdot \tau$ it follows
\begin{align*}
	\max\lbrace\,\|\bA\|_2\,,\,C_q\,\rbrace \cdot \sqrt{\frac{\varepsilon}{\omega}} \leq \frac{\tol}{4}\,,\tageq\label{eqn:bound:AQ:eps_omg}
\end{align*}
where $C_q \geq \max_{\bxi \in \overline\Omega}\|\nabla q(\bxi)\|_2$\,.

Consider $\bz \equiv(\bx,\blambda,\bmu_{L},\bmu_R) \in \cN(\tau)$ for $\tau \leq \tau_E$. Using Lemma~\ref{lem:relation_QP_regQP}, Lemma~\ref{lem:OptimErr_OptimGap} and the bound in \eqref{eqn:bound:AQ:eps_omg}, we find:
\begin{subequations}
\begin{align*}
	q(\bx) &\leq q(\bx^\star_\omega) + C_q \cdot \underbrace{\|\bx_\omega^\star - \bx\|_2}_{\leq \sqrt{\varepsilon/\omega}} \leq q(\bx^\star) + \frac{\tol}{2} \tageq\label{eqn:SolutionCondBound1}\\
	\|\bA\cdot\bx-\bb\|_2 &\leq \|\bA \cdot \bx^\star_\omega-\bb\|_2 + \|\bA\|_2 \cdot \|\bx_\omega^\star - \bx\|_2 \leq \boRes + \frac{\tol}{2} \tageq\label{eqn:SolutionCondBound2}
\end{align*}\label{eqn:SolutionCondBound}%
\end{subequations}
The two above bounds yield the thesis in Theorem~\ref{thm:tol-accurate_Solution}\,.

Finally, we want to come back to the enveloped spaces $\cN(\tau,\nu)$. Let $\tbz\equiv(\tbx,\tblambda,\tbmu_L,\tbmu_R) \in \cN(\tau,\nu_0)$. Then, by definition, there exists a vector $\bz \in \cN(\tau)$ such that $\|\tbz - \bz\|_2 \leq \nu_0$. Using a triangular inequality and \eqref{eqn:SolutionCondBound}, we find
\begin{align*}
	q(\tbx) &\leq q(\bx) + C_q \cdot \underbrace{\|\tbx-\bx\|_2}_{\leq \nu_0} \leq q(\bx^\star) + \frac{\tol}{2} + \underbrace{C_q \cdot \nu_0}_{\leq \tol/2}\,,\\
	\|\bA\cdot\tbx-\bb\|_2 &\leq \|\bA \cdot \bx-\bb\|_2 + \|\bA\|_2 \cdot \|\tbx-\bx\|_2 \leq \boRes + \frac{\tol}{2} + \underbrace{\|\bA\|_2 \cdot \nu_0}_{\leq \tol/2}\,.
\end{align*}
We summarize this in the following lemma, that can be directly applied to the iterate in line 33\,.
\begin{lem}[$\tol$-accurate solution in $\cN(\tau,\nu_0)$]\label{lem:Tol_acc_sol}
	Let $\bz \equiv (\bx,\blambda,\bmu_L,\bmu_R) \in \cN(\tau,\nu_0)$, where $0<\tau \leq \tau_E$. Then $\bx$ satisfies \eqref{eqn:SolutionConditions}\,.
\end{lem}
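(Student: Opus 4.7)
My plan is to derive this statement as a direct corollary of Theorem~\ref{thm:tol-accurate_Solution} by combining the envelope definition of $\cN(\tau,\nu_0)$ with a triangle inequality, essentially formalising the short calculation that immediately precedes the lemma. First I would unpack the hypothesis $\bz \in \cN(\tau,\nu_0) = \cB_{\nu_0}\big(\cN(\tau)\big)$ to obtain a companion point $\hbz \equiv (\hbx,\hblambda,\hbmu_L,\hbmu_R) \in \cN(\tau)$ with $\|\bz - \hbz\|_2 \leq \nu_0$, which in particular implies $\|\bx - \hbx\|_2 \leq \nu_0$.

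Since $\hbz \in \cN(\tau)$ with $0 < \tau \leq \tau_E$, I would then invoke the bounds already established in \eqref{eqn:SolutionCondBound1}--\eqref{eqn:SolutionCondBound2} via Lemmas~\ref{lem:relation_QP_regQP},~\ref{thm:EpsOptimality}, and~\ref{lem:OptimErr_OptimGap}, applied to $\hbx$ instead of $\bx$. These give $q(\hbx) \leq q(\bx^\star) + \tol/2$ and $\|\bA\cdot\hbx - \bb\|_2 \leq \boRes + \tol/2$. Propagating each of these to $\bx$ via the triangle inequality and the gradient bound $\|\nabla q\|_2 \leq C_q$ on $\overline\Omega$ yields $q(\bx) \leq q(\hbx) + C_q\cdot\|\bx-\hbx\|_2 \leq q(\bx^\star) + \tol/2 + C_q\cdot\nu_0$, and analogously $\|\bA\cdot\bx-\bb\|_2 \leq \boRes + \tol/2 + \|\bA\|_2\cdot\nu_0$. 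The method parameter $\nu_0$ is specified in \ConstantsRef precisely so that $\max\{C_q,\|\bA\|_2\}\cdot\nu_0 \leq \tol/2$, at which point both bounds close to $q(\bx^\star)+\tol$ and $\boRes+\tol$ as required.

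The remaining inclusion $\bx \in \Omega$, i.e.~$\|\bx\|_\infty < 1$, is where I expect the most care to be needed, since the envelope definition of $\cN(\tau,\nu_0)$ does not preserve this strict bound a priori. From $\hbz \in \cN(\tau)$ one can quantify how strictly interior $\hbx$ sits by combining the perturbed complementarity relations $\hbmu_L\cdot(\be+\hbx)-\tau\be = \br^{(3)}$ and $\hbmu_R\cdot(\be-\hbx)-\tau\be = \br^{(4)}$ with $\|(\br^{(3)},\br^{(4)})\|_2 \leq \theta\cdot\tau$ and with an a-priori upper bound on the multipliers available on the central-path neighbourhood, which together furnish a quantitative lower bound on $1-\|\hbx\|_\infty$. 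The method parameter $\nu_0$ in \ConstantsRef is then chosen small enough that adding a perturbation of $\ell_2$-norm at most $\nu_0$ still leaves the result inside $\Omega$. Once this interiorness step is handled, the two triangle-inequality bounds above deliver the remaining two conditions in \eqref{eqn:SolutionConditions}, completing the argument.
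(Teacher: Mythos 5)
Your proposal matches the paper's own argument: the paper proves this lemma by exactly the envelope-plus-triangle-inequality computation you describe (in the paragraph immediately preceding the lemma), using the parameter choices $C_q\cdot\nu_0\leq\tol/2$ and $\|\bA\|_2\cdot\nu_0\leq\tol/2$ to close the bounds. The interiorness condition $\bx\in\Omega$, which you rightly flag as needing separate care, is handled in the paper by Theorem~\ref{thm:SpaceInteriorness} (space interiorness), whose proof is essentially the quantitative argument you sketch --- combining the perturbed complementarity relations with the multiplier bounds to get $\|\bx\|_\infty\leq 1-\cGap$ on the whole envelope.
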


\section{Numerical stability}\label{sec:NumStab}
In this section we show that Alg.~\ref{Algo:IPM} is numerically stable. This means the following: We use this algorithm on a digital computer with IEEE floating point arithmetic and a unit round-off $\epsMach>0$. If $\epsMach$ is sufficiently small then the returned vector $\bx$ of Alg.~\ref{Algo:IPM} still satisfies \eqref{eqn:SolutionConditions} --- in spite of all rounding errors. As in the last section, we spend one subsection of text for each section of the algorithm. In each subsection we show that the respective commented claims in the algorithm still hold true, even though this time we take numerical rounding errors into account.

\subsection{Initialization}
We start with the first part, i.e. Alg.~\ref{Algo:IPM} lines 4--8\,. Recapturing the results from the previous section, we showed that the exact minimizer $\bx_{\infty}$ of $f$ satisfies
$\|\bx_\infty\|_2 \in \cB_{0.5}(\bx_0)$\,. Further, {in Appendix A, using Lemma~\ref{Lem:Boyd} it is shown in the proof of Theorem~\ref{Thm:NewtonMinimization} in \eqref{eqn:Page23} that
the exact Newton iterates $\bx_k$ satisfy
\begin{align}
	\|\bx_k - \bx_\infty\|_2 \leq 2 \cdot \sqrt{C_{Hf}} \cdot {\vartheta_k} \quad\quad \forall k \in \N_0\,,\label{eqn:BoundPrimalRounding}
\end{align}
where $\vartheta_{k+1}$ obeys to the recursive bound $\vartheta_{k+1}\leq 2 \cdot \vartheta_k^2$\,, provided that
 $\vartheta_0 < 0.25$.

Now we modify the recursion for $\vartheta_{k+1}$ such that the above statement holds true also subject to rounding errors. Consider that there is a perturbation $\delta>0$ in each computed iterate $\bx_k$. Thus, we replace the recursive formula for $\vartheta_k$ by
\begin{align*}
	\vartheta_{k+1} := 2 \cdot \vartheta_k^2 + \delta\,,\tageq\label{eqn:PerturbedVarthetaRecursion}
\end{align*}
which has the purpose that now the bound \eqref{eqn:BoundPrimalRounding} still holds for the $\delta$-perturbed primal iterates (notice $\delta \leq 2 \cdot \sqrt{C_{Hf}} \cdot \delta$). For reasonably small perturbations $\delta$ the requirement $\vartheta_k<0.25$ is still satisfied for all $k \in \N_0$. Using induction, from \eqref{eqn:PerturbedVarthetaRecursion} we further find the bound $\vartheta_k \leq \max\lbrace \, 2 \cdot 2^{-(1+2^k)} \,,\, 2 \cdot \delta \, \rbrace$\,.

In summary, we have shown at this point that $\delta$-perturbed primal iterates $\bx_k$ do still satisfy the bound \eqref{eqn:BoundPrimalRounding}, where $\vartheta_k$ obey to the bound $\vartheta_k \leq \max\lbrace \, 2 \cdot 2^{-(1+2^k)} \,,\, 2 \cdot \delta \, \rbrace$\,.
Noting that the perturbation $\delta$ comes from the computation of the Newton step,
in the next step we establish an upper bound for $\delta$. To this end we make use of the following result, where $C_{Hf}, C_{Df}$ are method parameters.
\begin{boxThm}[Primal stability]\label{thm:PrimalStability}
	Let $\bx \in \R^n$, $\|\bx\|_2 \leq 0.5$\,. Then the linear system
	\begin{align*}
		\nabla^2 f(\bx) \cdot \Delta\bx = - \nabla f(\bx)
	\end{align*}
	is well-posed with condition number bounded from above by $ C_{Hf}$ and $\|\nabla f(\bx)\|_2\leq C_{Df}$\,.
\begin{proof}Appendix B.\suppressqed\end{proof}
\end{boxThm}
This means that the absolute numerical rounding error $\delta \in \cO(C_{Hf} \cdot C_{Df} \cdot \epsMach)$ when computing the primal iterates on an IEEE floating point computer with unit round-off $\epsMach$ and with a stable linear system solver (e.g. Householder's QR decomposition with backward substitution \cite{NumericalStability})\,. As $\vartheta_k \le 2 \cdot \delta$  and  \eqref{eqn:BoundPrimalRounding} holds, in order to guarantee that the statement in
Alg.~\ref{Algo:IPM} line 9 still holds true, the user shall choose $\epsMach$ such that $\delta \leq \rho/\sqrt {C_{Hf}}$\,. Then, the result in Alg.~\ref{Algo:IPM} line 9 still holds true. In particular, it must be
\begin{align*}
	const \cdot \epsMach \leq \frac{\rho}{2 \cdot (C_{Hf})^{1.5} \cdot C_{Df} }\,,\tageq\label{eqn:epsMachBoundPrimal}
\end{align*}
where the constant depends on the particular linear system solver being used, cf. \cite{NumericalStability} for details. Notice that since $\delta\ll 0.9$ the property $\|\bx_k\|_2 \leq 0.5$ still holds for all rounding-affected primal iterates because under exact calcuations it would even hold $\|\bx_k\|_2\leq 0.41$, which is shown in the proof of Theorem~\ref{Thm:NewtonMinimization}, cf. \eqref{eqn:TighterBound_xInf}\,.
\largeparbreak

Now we consider lines 10--12 of Alg.~\ref{Algo:IPM}\,. We are not concerned with the error in $\cbz$ that results from a perturbation $\delta$ in $\bx_{\itersPrimal}$ because the proposition in Alg.~\ref{Algo:IPM} line~9 still holds. But we have to be concerned about perturbations in $\cbz$ that arise due to numerical rounding errors in lines 10--11\,. Proceeding as in the proof of Lemma~\ref{lem:Bound_cbz} we can show that the absolute condition number of $\cbz$ is bounded by
\begin{align*}
\sqrt{N} \cdot \left( 1 + \frac{1}{\omega} \cdot \|\bA\|_2 + 8 \cdot \tau_A\right)\,.
\end{align*}
Thus, $\cbz$ is stable with respect to small perturbations $\epsMach$\,.

\largeparbreak

Finally, there is the error-reset step in lines 13--15\,. Theorem~\ref{thm:Boundedness}
in the next subsection implies numerical stability of this Newton step. In particular, it shows that in Alg.~\ref{Algo:IPM} line 15 it holds $\bz \in \cN(\tau_A,\nu_0)$ subject to a sufficiently small value of $\epsMach$\,.

\subsection{Path-following}

One single iteration of Newton's method we call \textit{step}. The path-following is a loop from lines 19--30 that consists of three kinds of Newton steps.

The key ingredient to understand why the path-following is stable with respect to numerical rounding errors is the fact that the norms of all iterates $\bz$, all updates $\Delta\bz$, all Jacobians $DF(\bz)$ and the norms of their inverses {remain} bounded.
\begin{boxThm}[Primal-dual stability]\label{thm:Boundedness}\label{thm:PrimalDualStability}
	Let $\bz \in \cN(\tau,\nu)$, where $\tau \in [\sigma \cdot \tau_E,\tau_A]$ and $\nu \leq \nu_2$ and $\htau \in \lbrace\sigma \cdot \tau,\tau\rbrace$\,. Then the following hold:
	\begin{align*}
	\|\bz\|_2 &\leq C_z\\
	\|DF(\bz)\|_2 &\leq C_{DF}\\
	\|DF(\bz)\inv\|_2 &\leq C_{DFinv}\\
	\|F_\htau(\bz)\|_2 &\leq C_F
	\end{align*}
\begin{proof}Appendix B.\suppressqed\end{proof}
\end{boxThm}
The boundedness implies that the linear systems are well-conditioned and the Newton steps $\Delta\bz$ are always bounded:
\begin{align*}
\cond_2\big(\, DF(\bz) \,\big) &\leq \cDF \cdot \cDFinv =: \kappa_{DF}\,,\\
\|\Delta\bz\|_2 &\leq \cDFinv \cdot C_F\,,
\end{align*}
whenever $\bz \in \cN(\tau,\nu)$\,.

The theorem has the following practical meaning: When solving the linear systems with a numerically stable method then the numerical rounding errors in the updated vector $\hbz := \bz + \Delta\bz$ are bounded in 2-norm by $\cO(\kappa_{DF} \cdot \cDFinv \cdot C_F \cdot \epsMach)$\,. With the help of this theorem we can show later that the claims in Alg.~\ref{Algo:IPM} lines 21,\,23,\,26 still hold in spite of numerical rounding errors (subject to $\epsMach$ sufficiently small). The claims are proven in detail in the next three threorems.

We briefly discuss the size of $\kappa_{DF}$. Typically, $\cDF,\cDFinv$ and also {$C_F$} are very large. However, their logarithms are in the order of $L$. This implies $\log(\kappa_{DF}) \in \cO(L)$. Background in stability analysis tells us that usually one must choose the machine accuracy $\epsMach$ in the order of the reciprocal of the condition number, this is $\cO(1/\kappa_{DF})$ in our case. It follows $-\log_2(\epsMach) \in \cO(L)$ is required, where $-\log_2(\epsMach)$ is the number of digits that must be stored for each computed number. For small tolerances $\tol>0$ it roughly holds $-\log(\epsMach) \in \cO(-\log(\tol))$, meaning that the number of required digits for computation is identical in order of magnitude to the number of digits that is just needed to write down a sufficiently accurate solution.

\largeparbreak

For our particular numerically stable method we consider three different kinds of Newton steps (A), (B), (C), that we define below.
\paragraph{(A) Path step}
This step acts from $\bz \in \cN(\tau,\nu_0)$, where $\tau \in [\tau_E,\tau_A]$. It computes an updated vector $\hbz \in \cN(\sigma\cdot \tau,\nu_1)$. This step is useful because it reduces the value of $\tau$ in the updated primal-dual vector $\hbz$.
\begin{boxThm}[Path step]\label{thm:Newton-PathStep_stability}
	Let $\tbz \in \cN(\tau,\nu_0)$, where $\tau \in [\tau_E,\tau_A]$. Define $\htau := \sigma \cdot \tau$ and solve the linear system
	\begin{align}
	DF(\tbz) \cdot \Delta\tbz = -F_\htau(\tbz)
	\end{align}
	{on an IEEE machine with $\epsMach>0$ sufficiently small and a stable algorithm}.
	Compute $\thbz := \tbz + \Delta\tbz$. Then it holds: $\thbz \in \cN(\htau,\nu_1)$\,.
\begin{proof} Appendix B.\suppressqed\end{proof}
\end{boxThm}
	
\paragraph{(B) Centrality step}
This step acts from $\bz \in \cN(\htau,\nu_1)$\,, where $\htau \in [\sigma \cdot \tau_E\,,\,\sigma \cdot \tau_A]$. It computes an updated vector $\hbz \in \cN_h(\htau,\nu_2)$. This step is useful because elements of $\cN_h$ live closer to the central path than elements of $\cN$.
\begin{boxThm}[Centrality step]\label{thm:Newton-CentralityStep}
	Let $\tbz \in \cN(\htau,\nu_1)$, where $\htau \in [\sigma \cdot \tau_E\,,\,\sigma\cdot\tau_A]$. Solve the linear system
	\begin{align}
	DF(\tbz) \cdot \Delta\bz = -F_\htau(\tbz)\,,
	\end{align}
	{on an IEEE machine with $\epsMach>0$ sufficiently small and a stable algorithm}.
	Compute $\thbz := \tbz + \Delta\tbz$. Then it holds: $\thbz \in \cN_h(\htau,\nu_2)$\,.
	\begin{proof}Appendix B.\suppressqed\end{proof}
\end{boxThm}

\paragraph{(C) Error-reset step}
This step acts from $\bz \in \cN_h(\htau,\nu_2)$ where $\htau \in [\sigma \cdot \tau_E\,,\,\sigma \cdot \tau_A]$. It is a modified Newton step because the right-hand side in the linear system differs from $-F_\htau(\bz)$. The step computes an updated vector $\hbz \in \cN(\htau,\nu_0)$. The benefit of this step lives in the property that the distance $\nu$ of $\hbz$ from the non-enveloped neighborhood space $\cN(\htau)\equiv \cN(\htau,0)$ is reset to the small method parameter $\nu_0$\,.
\begin{boxThm}[Error-reset step]\label{thm:ErrorResetStep}
	Let $\tbz \in \cN_h(\htau,\nu_2)$, where $\htau \in [\sigma \cdot \tau_E\,,\,\tau_A]$. Solve the linear system
	\begin{align}
	DF(\tbz) \cdot \Delta\tbz = -(\tbr\hp{1},\tbr\hp{2},\bO_n,\bO_n)\,,
	\end{align}
	on an IEEE machine with $\epsMach>0$ sufficiently small and a stable algorithm,
	where $(\tbr\hp{1},\tbr\hp{2},\tbr\hp{3},\tbr\hp{4}) := F_\htau(\tbz)$\,.
	Compute $\thbz := \tbz + \Delta\tbz$. Then it holds $\thbz \in \cN(\htau,\nu_0)$\,.
	
	If there are no rounding errors then instead it holds $\thbz \in \cN(\htau)$.
\begin{proof}Appendix B.\suppressqed\end{proof}
\end{boxThm}

\paragraph{Stable path-following mechanism}
We describe how the three types of Newton steps are utilized in the proposed method.
The path step accomplishes for the actual goal of the path-following section in Alg.~\ref{Algo:IPM}. The centrality step and error-reset step make sure that any rounding errors, which lead to growth in $\nu$, do not accumulate over several iterations. This is achieved by reducing the value of $\nu$ below a threshold method parameter $\nu_0$ within each cycle of the for-loop.

Fig.~\ref{fig:stablepathfollowing} illustrates the path-following strategy as used in Alg.~\ref{Algo:IPM}. We start from an iterate $\bz \in \cN(\tau,\nu_0)$. From there we compute $\bz_1$ with a path-step. The step achieves the reduction $\tau \rightarrow \htau$, while on the other hand the envelope grows $\nu_0 \rightarrow \nu_1$\,. Then, $\bz_2$ is computed with a centrality step. Since $\cN$ is replaced by $\cN_h$, the centrality improves (blue instead of red spaces), but on the other hand the envelope grows further $\nu_1 \rightarrow \nu_2$\,. Finally, an error-reset step is used to compute $\bz_3 \in \cN(\htau,\nu_0)$. From this vector we can proceed with the next cycle of the for-loop.

\begin{figure}
	\centering
	\includegraphics[width=0.9\linewidth]{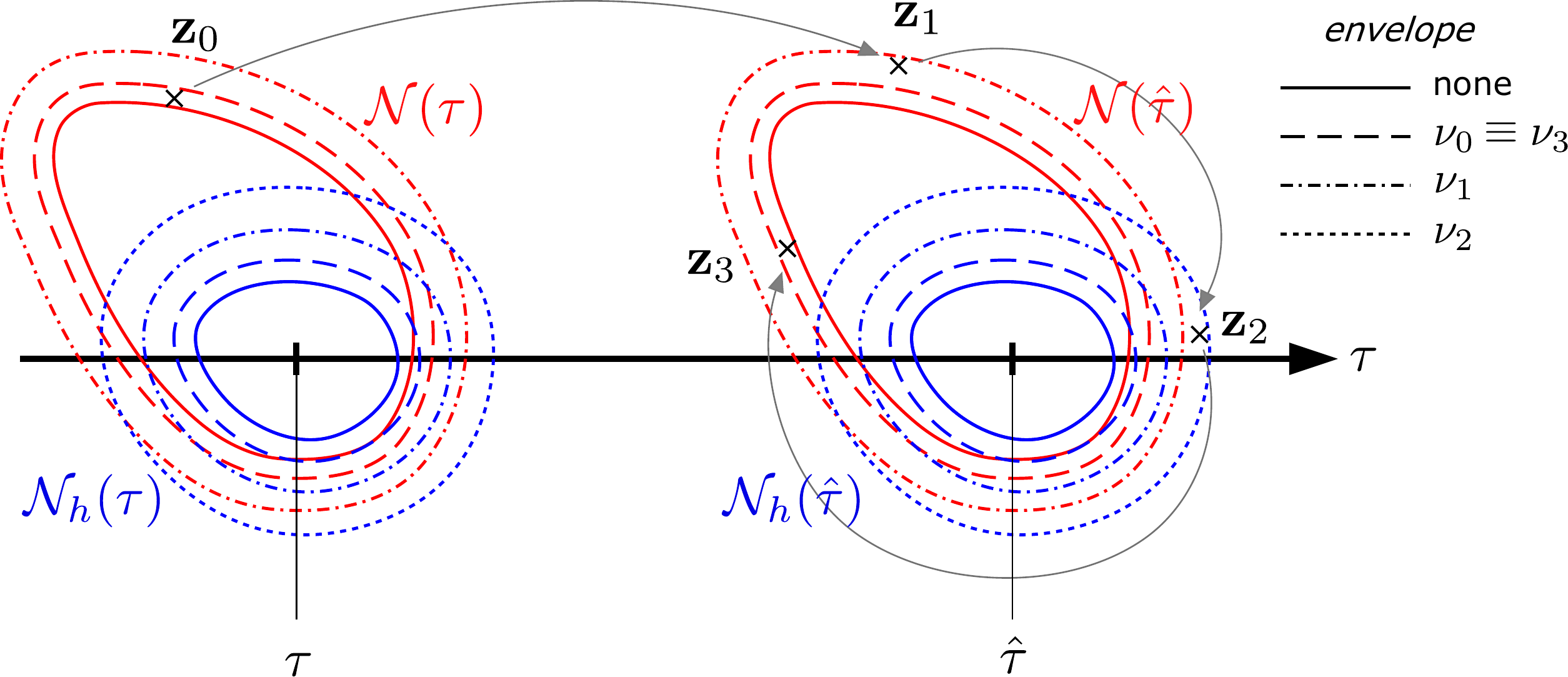}
	\caption{Iterates $\bz_0 \in \cN(\tau,\nu_0)$ to $\bz_3 \in \cN(\htau,\nu_0)$ of stable path-following. The spaces $\cN(\tau,\nu)$ form envelopes of $\cN(\tau)$ for a respective value of $\tau$. From the absolute error bound in each computed iterate for $\bz_0,\bz_1,\bz_2,\bz_3$ one can assure that they live in these enveloped spaces.}
	\label{fig:stablepathfollowing}
\end{figure}

\subsection{Termination}

In the former subsection we showed that despite numerical rounding errors the path-following yields iterates $\bz \in \cN(\tau,\nu_0)$, where eventually $\tau \leq \tau_E$. From there, Lemma~\ref{lem:Tol_acc_sol} shows that Alg.~\ref{Algo:IPM} returns a solution vector $\bx$ that satisfies \eqref{eqn:SolutionConditions}.

\subsection{Sufficiently small value of $\epsMach$}

From \eqref{eqn:epsMachBoundPrimal} and \eqref{eqn:epsMachBoundPrimalDual} we recap the following bounds:
\begin{align*}
	const \cdot \epsMach &\leq \frac{\rho}{2 \cdot C_{Hf} \cdot C_{Df} }\\
	const \cdot \epsMach &\leq \frac{\nu_0}{4 \cdot C_z \cdot \kappa_{DF}}
\end{align*}
The value of $const$ depends only on the linear equation system solver. According to \cite{NumericalStability} this constant is a small integer when using Householder's QR-decomposition with backward substitution. But one can also choose any other linear equation system solver.

All numbers on the right-hand sides live in the open interval $(0,\infty)$, with enumerators $< 1$ and denominators $>1$. Anyway, $|\log(\cdot)|$ of all method parameters are in the order of $L$. It follows $-\log(\epsMach) \in \cO(L)$ is required, where $-\log_{10}(\epsMach)$ is the number of digits that must be stored for each computed number. We discussed formerly that for small tolerances $\tol>0$ the complexity result for $-\log_{10}(\epsMach)$ basically means that the number of required digits for computation is identical in order of magnitude to the number of digits that is just needed to write down a sufficiently accurate solution.

\section{Conclusions}
In this paper we have shown that real-valued convex quadratic programs can be solved in floating-point arithmetic in weakly polynomial time up to an exact tolerance. The proposed method is polynomially efficient and numerically stable, regardless of how ill-conditioned or rank-deficient $\bQ$ and $\bA$ are. All linear systems within the algorithm have bounded condition numbers and all vectors that appear during the computations are bounded. The path-following iteration yields strictly interior iterates despite numerical rounding errors.

This paper had a theoretical focus. We aimed at proving guaranteed success of our method for any real-valued CQP. Practical methods will be limited in the choice that can be made for $\epsMach$ (due to hardware limitations) and $\sigma$. $\sigma$ is the geometric reduction of $\tau$ per iteration. We used a short step method, resulting in $\sigma$ close to $1$. For practical performance it is important to choose $\sigma$ closer to zero, i.e. using a long-step method. Good performance can be achieved, e.g., by choosing $\sigma$ as in Mehrotra's predictor-corrector method, cf. \cite[p.\,411]{Nocedal} and \cite{Mehrotra}. While we have shown numerical stability only for short step methods, further research needs to be conducted to find stability results also for long-step methods.

\FloatBarrier

\bibliography{stableIPM_bib}
\bibliographystyle{plain}

\FloatBarrier

\section{Appendix A: Proofs of correctness section}

\subsection{Proofs of the Initialization section}
\paragraph{Proof of Theorem~\ref{Thm:NewtonMinimization}}
We start summarizing in the following Lemma convergence results of Newton's method for self-concordant functions.
\begin{lem}\label{Lem:Boyd}
	Given $\MinFun : \dom(\MinFun) \subset \R^d \rightarrow \R$ self-concordant and convex, and $\bu_0 \in \dom(\MinFun)$. Define $\MinFun^\star := \min_{\tbu \in \dom(\MinFun)}\lbrace\,\MinFun(\tbu)\,\rbrace$, and the recursive sequences $\lbrace\vartheta_k\rbrace_{k \in \N_0}\subset \R$, $\lbrace\bu_k\rbrace_{k \in \N_0} \subset \dom(f)$ as
	\begin{align*}
	\vartheta_k &:= \sqrt{\nabla \MinFun(\bu_k)\t \cdot \big(\nabla^2 \MinFun(\bu_k)\big)^{-1} \cdot \nabla \MinFun(\bu_k) \,} &  k&=0,1,2,...\\
	\bu_{k+1} &:= \bu_k -  \big(\nabla^2\MinFun(\bu_k)\big)^{-1} \cdot \nabla \MinFun(\bx_k) & k&=0,1,2,...\,.
	\end{align*}
	If $\vartheta_0<0.25$ then it holds $\forall k \in \N_0$:
	\begin{subequations}
		\begin{align}
		\MinFun(\bu_k) &\leq \MinFun^\star + \vartheta_k^2 \label{eqn:BoydGapBound}\\
		{\vartheta_{k+1}} &\leq 2 \cdot \vartheta_k^2 \label{eqn:BoydConvBound}
		\end{align}
	\end{subequations}
\end{lem}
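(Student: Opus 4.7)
The plan is to invoke the standard self-concordant Newton calculus and verify that the threshold $\vartheta_0<0.25$ is strong enough to carry the induction through with the clean constants required by the statement. The two tools I would use are the suboptimality bound and the quadratic decrease bound for Newton's method on self-concordant functions, both of which come from estimating $g$ along the Newton direction using the one-dimensional self-concordant majorant $\omega_\ast(t)=-t-\log(1-t)$ for $t<1$. Concretely, the suboptimality bound states that whenever $\vartheta_k<1$ one has $g(\bu_k)-g^\star\le \omega_\ast(\vartheta_k)$, and the quadratic decrease bound states that whenever $\vartheta_k<1$ the next decrement satisfies $\vartheta_{k+1}\le \vartheta_k^2/(1-\vartheta_k)^2$. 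Both are derived by integrating the self-concordance inequality $|D^3 g(\bu)[\bv,\bv,\bv]|\le 2(\bv\t\nabla^2 g(\bu)\bv)^{3/2}$ along the Newton direction and comparing with the scalar model $-t-\log(1-t)$; I would cite these as standard (e.g.\ Boyd--Vandenberghe Chap.\ 9) rather than re-derive them.

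Next I would establish \eqref{eqn:BoydConvBound} together with the auxiliary fact $\vartheta_k<1/4$ for every $k$ by simultaneous induction. The base case $\vartheta_0<1/4$ is the hypothesis. For the inductive step, assume $\vartheta_k<1/4$; then $(1-\vartheta_k)^2>9/16>1/2$, so the quadratic decrease bound yields $\vartheta_{k+1}\le \vartheta_k^2/(1-\vartheta_k)^2<2\vartheta_k^2<2\cdot(1/4)^2=1/8<1/4$. This simultaneously proves \eqref{eqn:BoydConvBound} and preserves the induction hypothesis, so the recurrence $\vartheta_{k+1}\le 2\vartheta_k^2$ holds for all $k\in\N_0$.

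For \eqref{eqn:BoydGapBound} I would use the suboptimality bound together with the elementary estimate
\begin{align*}
\omega_\ast(t)=\sum_{n\ge 2}\frac{t^n}{n}\le \frac{t^2}{2}\sum_{n\ge 0}t^n=\frac{t^2}{2(1-t)}\qquad (0\le t<1),
\end{align*}
so that for any $t\le 1/2$ one has $\omega_\ast(t)\le t^2$. Since we have just proved $\vartheta_k<1/4\le 1/2$ for all $k$, it follows that $g(\bu_k)-g^\star\le \omega_\ast(\vartheta_k)\le \vartheta_k^2$, which is exactly \eqref{eqn:BoydGapBound}.

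The only delicate point is the verification that the hypothesis $\vartheta_0<1/4$ is tight enough to keep $(1-\vartheta_k)^{-2}\le 2$, which is what makes the constant $2$ appear in \eqref{eqn:BoydConvBound}. A looser threshold on $\vartheta_0$ would still give quadratic convergence but with a larger multiplicative constant, and a too loose threshold would not even guarantee that the iterates remain in the region where the self-concordant quadratic-convergence estimate is valid. Apart from this bookkeeping the argument is mechanical, consisting of applying the two well-known self-concordant Newton estimates and a clean induction.
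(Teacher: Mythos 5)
Your proposal is correct and follows essentially the same route as the paper: the paper's proof of this lemma is simply a citation to Boyd--Vandenberghe (pp.~502--505) together with the remark that backtracking accepts the unit step once $\vartheta_0<0.25$, and your argument is exactly the standard self-concordant analysis underlying that citation. The only added value is that you explicitly verify the constants --- $\vartheta_{k+1}\le\vartheta_k^2/(1-\vartheta_k)^2\le 2\vartheta_k^2$ via $(1-\vartheta_k)^{-2}\le 16/9<2$, and $\omega_\ast(t)\le t^2$ for $t\le 1/2$ --- which the paper outsources to the reference.
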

	\begin{proof}Cf. \cite[pp.\,502--505]{Boyd}.\end{proof}
Note that the analysis in \cite[pp.\,502--505]{Boyd} refers to a Newton method with back-tracking. However, therein it is proved that the backtracking line-search accepts the unit step and \eqref{eqn:BoydConvBound} holds whenever  $\vartheta_0<0.25$.

\largeparbreak

Here starts the actual proof of Theorem\,\ref{Thm:NewtonMinimization} where we make use of Lemma \ref{Lem:Boyd}:
First of all, $\bu_0 \in \cB_{0.5}(\bu_0)$ by construction. In the following we show bounds for $\vartheta_k$ under the induction hypothesis that $\bu_j \in \cB_{0.5}(\bu_0)$ $\forall j=0,...,k$\,. Finally, we will show that from the bound of $\vartheta_k$ in turn there follows $\bu_{k+1} \in \cB_{0.5}(\bu_0)$. So \eqref{eqn:Thm:NewtonMinimization:3} is shown by full induction for $k=1,2,3,...\,$.

The updating rule for $\bu_{k}$ and the requirement $\bI \matleq \nabla^2 \MinFun(\bu_{k})$ yield
\begin{align*}
\vartheta_k &= \sqrt{(\bu_{k+1}-\bu_k)\t\cdot \nabla^2\MinFun(\bu_k) \cdot (\bu_{k+1}-\bu_k)\,}\\
&\geq \frac{1}{\sqrt{\Big\|\big(\nabla^2\MinFun(\bu_k)\big)^{-1}\Big\|_2}} \cdot \|\bu_{k+1}-\bu_k\|_2 \geq \|\bu_{k+1}-\bu_k\|_2\,.\tageq \label{eqn:BoundDu}
\end{align*}
Moreover, from $\|\nabla\MinFun(\bu_0)\|_2 < 0.25$ and
\begin{align*}
\vartheta_k &= \sqrt{\nabla\MinFun(\bu_k)\t\cdot \big(\nabla^2\MinFun(\bu_k)\big)^{-1} \cdot \nabla\MinFun(\bu_k)\,} \leq \underbrace{\sqrt{\Big\|\big(\nabla^2\MinFun(\bu_k)\big)^{-1}\Big\|_2}}_{\leq 1} \cdot \|\nabla\MinFun(\bu_k)\|_2
\end{align*}
follows that $\vartheta_0< 0.25$ holds. Thus, all the requirements for Lemma\,\ref{Lem:Boyd} are satisfied.

Then, \eqref{eqn:BoydConvBound} holds and this implies $\vartheta_k \leq 2^{-(1+2^{k})}$. Thus, for $\rho \in (0,1)$ and $C\geq 1$ in Theorem  \ref{Thm:NewtonMinimization}, follows
\begin{align}
C \cdot \vartheta_k \leq \rho\quad \forall k\geq \left\lceil\log_2\big(1+\log_2(C/\rho)\big)\right\rceil\,.\label{eqn:BoundVartheta}
\end{align}
Note that it must be $C\geq 1$ because $\|\nabla^2\MinFun(\bu_k)\|_2\leq C$ and  $\bI\matleq\nabla^2\MinFun(\bu_k)$.
Therefore, proposition \eqref{eqn:Thm:NewtonMinimization:1} follows from \eqref{eqn:BoydGapBound},
and \eqref{eqn:BoundVartheta}.

Using $\|\nabla^2\MinFun(\bu_k)\|_2\leq C$ and
\begin{align*}
& & \vartheta_k \geq& \frac{1}{\sqrt{\|\nabla^2\MinFun(\bu_k)\|_2}} \cdot \|\nabla\MinFun(\bu_k)\|_2
\quad\Rightarrow\quad \|\nabla\MinFun(\bu_k)\|_2 \leq \underbrace{\sqrt{\|\nabla^2\MinFun(\bu_k)\|_2}}_{\leq \sqrt{C} \leq C} \cdot \vartheta_k\,
\end{align*}
shows that proposition \eqref{eqn:Thm:NewtonMinimization:2} holds.

From \eqref{eqn:BoundDu} and $\vartheta_k \leq 2^{-(1+2^{k})}$ we find
\begin{align*}
\|\bu_{j+1}-\bu_{j}\|_2 \leq 2^{-(1+2^j)}\,.
\end{align*}
Proposition \eqref{eqn:Thm:NewtonMinimization:3} now follows from using this bound in the following sum:

\begin{align*}
\|\bu_k-\bu_0\|_2 &\leq \sum_{j=0}^{k-1} \|\bu_{j+1} - \bu_{j}\|_2 \\
&\leq \sum_{j=0}^{k-1} 2^{-(1+2^j)}\leq \sum_{j=0}^{\infty} 2^{-(1+2^j)} \approx 0.408211... <0.5\,.\tageq\label{eqn:TighterBound_xInf}
\end{align*}

Finally we proof proposition \eqref{eqn:Thm:NewtonMinimization:4}. At this point we have already shown $\bu_k \in \cB_{0.5}(\bu_0)$ $\forall k \in \N_0$. We use the following infinite sum:
\begin{align*}
\|\bu_\infty-\bu_k\|_2
&\leq \|\bu_{k+1}-\bu_k\|_2 \cdot \lim\limits_{K\rightarrow\infty}\sum_{j=k}^K \underbrace{\frac{\|\bu_{j+1}-\bu_{j}\|_2}{\|\bu_{k+1}-\bu_{k}\|_2}}_{\leq \sqrt{C} \cdot  4 \cdot 2^{-(1+2^{j-k})}}\\
&\leq \|\bu_{k+1}-\bu_k\|_2 \cdot \underbrace{\sum_{j=0}^\infty \left(\sqrt{C} \cdot 4 \cdot 2^{-(1+2^j)}\right)}_{< \sqrt{C} \cdot 4 \cdot 0.5} < 2 \cdot \sqrt{C} \cdot \vartheta_k\,.	\tageq\label{eqn:Page23}
\end{align*}
In the above formula we use the bound
\begin{align*}
	\frac{\|\bu_{j+1}-\bu_j\|_2}{\|\bu_{k+1}-\bu_k\|_2} \leq \sqrt{C}\cdot\frac{\vartheta_{j}}{\vartheta_k} \leq \sqrt{C} \cdot 4 \cdot 2^{-(1+2^{j-k})}\quad\quad \forall j\geq k \in \N_0\,.
\end{align*}
This bound can be shown by induction in the same way as to show $\vartheta_j \leq 2^{-(1+2^j)}$\, by using \eqref{eqn:BoydConvBound} multiple times in a row.\qed

\paragraph{Proof of Lemma~\ref{lem:Bound_cbz}}
Consider $\bx_\infty$, the exact minimizer of $f$, and $\bx_K$, the approximate minimizer found by Newton's method in lines 4--8\,. Define $\delta\bx := \bx_\itersPrimal - \bx_\infty$\,. {For the vector $\cbz$ in line 12 we have}:
\begin{align*}
	\bz_\infty := \begin{pmatrix}
		\bx_\infty\\
		\frac{-1}{\omega} \cdot (\bA \cdot \bx_\infty - \bb)\\
		\frac{\tau_A}{\be+\bx_\infty}\\
		\frac{\tau_A}{\be-\bx_\infty}
	\end{pmatrix}\,,\quad\quad
 	{\cbz} := \begin{pmatrix}
	\bx_\itersPrimal\\
	\frac{-1}{\omega} \cdot (\bA \cdot \bx_\itersPrimal - \bb)\\
	\frac{\tau_A}{\be+\bx_\itersPrimal}\\
	\frac{\tau_A}{\be-\bx_\itersPrimal}
	\end{pmatrix}
\end{align*}

Since living exactly on the central path {as $F_{\tau_A}(\bz_\infty)=\bO$}, we conclude $\bz_\infty \in \cN_h(\tau_A)$\,. In the following we establish a bound to show $\|\cbz - \bz_\infty\|_2 \leq \nu_2$, which shows the proposition. We have
\begin{align*}
\cbz - \bz_\infty = \begin{pmatrix}
		\delta\bx\\
		\frac{-1}{\omega} \cdot \bA \cdot \delta\bx\\
		\frac{\tau_A}{\be+\bx_{\infty}+\delta\bx}-\frac{\tau_A}{\be+\bx_{\infty}}\\
		\frac{\tau_A}{\be-\bx_{\infty}-\delta\bx}-\frac{\tau_A}{\be-\bx_{\infty}}
		\end{pmatrix}\,.\tageq\label{eqn:Sensitivitiy_cbz}
\end{align*}
For the third and fourth component we can use $\|\bx_\infty\|_2\leq 0.41$, as shown in the proof of Theorem~\ref{Thm:NewtonMinimization}, cf. \eqref{eqn:TighterBound_xInf}. Since {$\|\bx_\infty\|_2<0.41$ and despite numerical rounding $\|\bx_\itersPrimal-\bx_\infty\|_2 \leq 3 \cdot \rho \ll 0.09$}, it follows $\|\bx_K\|_2<0.5$\,. Thus we find
\begin{align*}
	\left\|\frac{\tau_A}{\be+\bx_{\infty}+\delta\bx}-\frac{\tau_A}{\be+\bx_{\infty}}\right\|_\infty &\leq \frac{\tau_A}{0.25} \cdot \|\delta\bx\|_2\,,\\
	\left\|\frac{\tau_A}{\be-\bx_{\infty}-\delta\bx}-\frac{\tau_A}{\be-\bx_{\infty}}\right\|_\infty &\leq \frac{\tau_A}{0.25} \cdot \|\delta\bx\|_2\,.
\end{align*}
Inserting this, we finally get
\begin{align*}
\|\cbz - \bz_\infty\|_2 \leq \sqrt{N}\cdot\left(1+\frac{\|\bA\|_2}{\omega} + 8 \cdot \tau_A \right) \cdot \underbrace{\|\delta\bx\|_2}_{\leq 3 \cdot \rho} \leq \nu_2\,,
\end{align*}
which follows from the way how we chose the method parameter $\rho$.\qed

\subsection{Proofs of the Path-following section}

\paragraph{Proof of Theorem~\ref{thm:Newton-PathStep}}
Our proof and all the following lemmas are adapted {from the KKT system in \cite[eq. 7]{IPM25ylater} to our optimality-function}.

\begin{lem}\label{lem:Numerator}
	{Consider $\beta,\theta$ from \ConstantsRef}. Let $\tau>0$ and $\bz \in \cN(\tau)$.
	Then it holds:
	\begin{align}
	\Bigg\|\begin{pmatrix}
		(\be+\bx)\cdot\bmu_L - \htau\cdot\be\\
		(\be-\bx)\cdot\bmu_R - \htau\cdot\be
		\end{pmatrix}\Bigg\|_2^2 \leq (\theta+\beta)^2 \cdot \tau^2
	\end{align}
\end{lem}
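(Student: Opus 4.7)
The plan is to expand the two block components of the vector inside the norm using the definition of the neighborhood $\cN(\tau)$, extract the deviation $\tau - \htau$ from the constant-$\tau$ part of the residual, and then bound by triangle inequality.

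More concretely, since $\bz\in\cN(\tau)$, the last two blocks of $F_\tau(\bz)=(\bO_n,\bO_m,\br\hp{3},\br\hp{4})$ give the exact identities $(\be+\bx)\cdot\bmu_L = \tau\cdot\be + \br\hp{3}$ and $(\be-\bx)\cdot\bmu_R = \tau\cdot\be + \br\hp{4}$, with $\|(\br\hp{3},\br\hp{4})\|_2\le\theta\cdot\tau$. Substituting into the quantity of interest and using $\htau=\sigma\cdot\tau$ yields
\begin{align*}
\begin{pmatrix}(\be+\bx)\cdot\bmu_L - \htau\cdot\be\\ (\be-\bx)\cdot\bmu_R - \htau\cdot\be\end{pmatrix}
= (\tau-\htau)\cdot\begin{pmatrix}\be\\ \be\end{pmatrix} + \begin{pmatrix}\br\hp{3}\\ \br\hp{4}\end{pmatrix}.
\end{align*}
A triangle inequality in the Euclidean norm then splits the bound into a ``shift'' term and a ``centrality residual'' term.

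For the shift term, the $2n$-dimensional all-ones vector has norm $\sqrt{2n}$, so its contribution equals $\sqrt{2n}\cdot|\tau-\htau| = \sqrt{2n}\cdot\tau\cdot(1-\sigma)$. By the definition of the method parameter $\sigma=1-\beta/\sqrt{2n}$ (as read off from the complexity expression for $\itersPrimalDual$ in the paper), this reduces exactly to $\beta\cdot\tau$. The centrality-residual term is controlled by the defining inequality of $\cN(\tau)$, giving at most $\theta\cdot\tau$. Adding the two and squaring produces $(\theta+\beta)^2\cdot\tau^2$, which is the claimed bound.

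I do not expect any genuine obstacle: the identity is algebraic, the only ingredients are the defining properties of $\cN(\tau)$ and the explicit formula $\sigma=1-\beta/\sqrt{2n}$. The only point to be careful about is getting the constant right when combining the two blocks into a single $2n$-vector, so that the factor $\sqrt{2n}$ (rather than $\sqrt{n}$) cancels the $1/\sqrt{2n}$ in $1-\sigma$ to produce exactly $\beta\cdot\tau$.
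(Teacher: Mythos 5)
Your proof is correct and follows essentially the same route as the paper: decompose the vector into the centrality residual $(\br\hp{3},\br\hp{4})$ (which is exactly $(\be+\bx)\cdot\bmu_L-\tau\cdot\be$ and $(\be-\bx)\cdot\bmu_R-\tau\cdot\be$) plus the shift $(\tau-\htau)\cdot(\be,\be)$, bound these by $\theta\cdot\tau$ and $\beta\cdot\tau$ respectively using $\sigma=1-\beta/\sqrt{2\cdot n}$, and apply the triangle inequality before squaring. The $\sqrt{2n}$ cancellation you flag is indeed the one point of care, and you handle it as the paper does.
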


\begin{proof}
		\begin{align*}
		\Bigg\|\begin{pmatrix}
		(\be+\bx)\cdot\bmu_L - \htau\cdot\be\\
		(\be-\bx)\cdot\bmu_R - \htau\cdot\be
		\end{pmatrix}\Bigg\|_2^2
		=
		\Bigg\|\underbrace{\begin{pmatrix}
			(\be+\bx)\cdot\bmu_L - \tau\cdot\be\\
			(\be-\bx)\cdot\bmu_R - \tau\cdot\be
			\end{pmatrix}}_{\bu} + \underbrace{\tau\cdot(1-\sigma)\cdot\begin{pmatrix}
			\be\\
			\be
			\end{pmatrix}}_{\bv}\Bigg\|_2^2
		\end{align*}
		From $\bz \in \cN(\tau)$ follows $\|\bu\|_2 \leq \theta \cdot \tau$. Further, from the method parameter \mbox{$\sigma= 1-\beta/\sqrt{2\cdot n}$} follows $\|\bv\|_2 \leq \beta \cdot \tau$. Altogether we have \[\|\bu+\bv\|_2^2 \leq (\|\bu\|_2 + \|\bv\|_2)^2 \leq (\theta+\beta)^2 \cdot \tau^2\,.\qedhere\]
\end{proof}

\begin{lem}\label{lem:Denominator}
	{Given $\tau>0$ and $\bz \in \cN(\tau)$}, it holds:
	\begin{align}
	\operatornamewithlimits{min}_{1\leq j\leq n}\left\lbrace\, (1+x^{[j]})\cdot\mu_L^{[j]},\,(1-x^{[j]})\cdot\mu_R^{[j]} \,\right\rbrace \geq (1-\theta)\cdot \tau
	\end{align}
\end{lem}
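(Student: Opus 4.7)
My plan is to read off the claim directly from the defining equations of $F_\tau$ together with the residual bound that membership in $\cN(\tau)$ imposes. The point is that the third and fourth block rows of $F_\tau(\bz)=(\bO_n,\bO_m,\br^{(3)},\br^{(4)})$ give componentwise identities for the quantities $(1+x^{[j]})\mu_L^{[j]}$ and $(1-x^{[j]})\mu_R^{[j]}$, and the envelope bound on $(\br^{(3)},\br^{(4)})$ controls the deviation of each such component from $\tau$.

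Concretely, I would first unpack the definition of $F_\tau$ in \eqref{eqn:F_KKT} to write
\[
(1+x^{[j]})\cdot\mu_L^{[j]} \;=\; \tau + r^{(3),[j]}, \qquad (1-x^{[j]})\cdot\mu_R^{[j]} \;=\; \tau + r^{(4),[j]}
\]
for every $j=1,\dots,n$. Next, I would use the elementary fact that the $\infty$-norm is bounded by the $2$-norm to conclude
\[
|r^{(3),[j]}|\,\leq\,\|\br^{(3)}\|_2\,\leq\,\|(\br^{(3)},\br^{(4)})\|_2\,\leq\,\theta\cdot\tau,
\]
and likewise $|r^{(4),[j]}|\leq\theta\cdot\tau$, where the last inequality is the defining bound for $\bz\in\cN(\tau)$.

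Combining the two displays yields $(1+x^{[j]})\mu_L^{[j]}\geq\tau-\theta\tau=(1-\theta)\tau$ and the analogous estimate for the right-sided product, which is exactly the claim after taking the minimum over $j$. The only thing worth double-checking is that the resulting lower bound is positive, which holds because $\theta<1$ as a method parameter (Appendix~C), so $(1-\theta)\tau>0$ and the bound is consistent with the positivity of $\bmu_L,\bmu_R$ and of $\be\pm\bx$ asserted in the definition of $\cN(\tau)$. There is no substantial obstacle here: the statement is essentially a restatement of the residual tolerance in the neighborhood definition, and the proof is a two-line componentwise estimate.
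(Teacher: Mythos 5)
Your proof is correct and is essentially the same as the paper's: both bound the componentwise deviations $|(1\pm x^{[j]})\mu_{L/R}^{[j]}-\tau|$ by the $\infty$-norm, which is in turn bounded by the $2$-norm of the residual $(\br^{(3)},\br^{(4)})$, giving $(1-\theta)\tau$ as the lower bound. Your version merely writes out explicitly the componentwise step that the paper leaves as "the thesis easily follows from the definition of the $\infty$-norm."
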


\begin{proof}
	Since $\bz \in \cN(\tau)$ we have
	\begin{align*}
	\left\|\begin{pmatrix}
	(\be+\bx)\cdot\bmu_L - \tau\cdot\be\\
	(\be-\bx)\cdot\bmu_R - \tau\cdot\be
	\end{pmatrix}\right\|_\infty \leq \left\|\begin{pmatrix}
	(\be+\bx)\cdot\bmu_L - \tau\cdot\be\\
	(\be-\bx)\cdot\bmu_R - \tau\cdot\be
	\end{pmatrix}\right\|_2 \leq \theta\cdot\tau\,.
	\end{align*}
	Then, the thesis easily follows from the definition of  the $\infty$-norm.
\end{proof}

\begin{lem}\label{Lem:UVleqU+V}
	Let $\bu,\bv\in\R^d$ such that $\bu\t\cdot\bv\geq 0$. Then: $\|\bu\cdot\bv\|_2\leq0.36 \cdot \|\bu+\bv\|^2_2$.
\end{lem}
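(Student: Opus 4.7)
The plan is to separate the index set $\{1,\dots,d\}$ into $P := \{i : u^{[i]} v^{[i]} \geq 0\}$ and $M := \{i : u^{[i]} v^{[i]} < 0\}$ and to treat the two parts with different bounds. On $P$, the estimate $(u^{[i]}+v^{[i]})^2 \geq 4\,u^{[i]}v^{[i]} = 4\,|u^{[i]}v^{[i]}|$ is immediate from the AM--GM inequality applied to same-sign scalars. On $M$, no such direct bound holds (the quantity $(|u^{[i]}|-|v^{[i]}|)^2$ may be zero while $|u^{[i]}v^{[i]}|$ is large), so the $M$-contribution must be controlled indirectly through the hypothesis $\bu\t\cdot\bv \geq 0$.

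First I would invoke the hypothesis in the form
\begin{equation*}
\sum_{i\in M}|u^{[i]}v^{[i]}|
= -\sum_{i\in M} u^{[i]}v^{[i]}
\leq \sum_{i\in P} u^{[i]}v^{[i]}
= \sum_{i\in P} |u^{[i]}v^{[i]}|,
\end{equation*}
which transfers the (a priori unbounded) $M$-mass to the $P$-part. Next, using the elementary fact that $\sum_i a_i^2 \leq (\sum_i a_i)^2$ whenever $a_i \geq 0$, I would bound each half of $\|\bu\cdot\bv\|_2^2$ by
\begin{equation*}
\sum_{i\in P}(u^{[i]}v^{[i]})^2 \;\leq\; \Big(\sum_{i\in P}|u^{[i]}v^{[i]}|\Big)^{2},\qquad
\sum_{i\in M}(u^{[i]}v^{[i]})^2 \;\leq\; \Big(\sum_{i\in M}|u^{[i]}v^{[i]}|\Big)^{2}.
\end{equation*}
Combining these two with the transfer inequality gives $\|\bu\cdot\bv\|_2^{2} \leq 2\bigl(\sum_{i\in P}|u^{[i]}v^{[i]}|\bigr)^{2}$.

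Finally I would apply the $P$-bound term-by-term, namely $|u^{[i]}v^{[i]}| \leq \tfrac{1}{4}(u^{[i]}+v^{[i]})^2$ for $i\in P$, to obtain
\begin{equation*}
\sum_{i\in P}|u^{[i]}v^{[i]}|
\;\leq\; \tfrac{1}{4}\sum_{i\in P}(u^{[i]}+v^{[i]})^2
\;\leq\; \tfrac{1}{4}\|\bu+\bv\|_2^{2}.
\end{equation*}
Chaining this into the previous estimate yields $\|\bu\cdot\bv\|_2^{2} \leq \tfrac{1}{8}\|\bu+\bv\|_2^{4}$, whence $\|\bu\cdot\bv\|_2 \leq \tfrac{1}{2\sqrt{2}}\|\bu+\bv\|_2^{2} \approx 0.3536\,\|\bu+\bv\|_2^{2}$, which is strictly below the claimed constant $0.36$.

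I do not expect a real obstacle here; the only subtlety is recognizing that the indices of $M$ cannot be bounded pointwise and must instead be absorbed collectively into the $P$-mass via the scalar-product hypothesis. The tightness of the constant $1/(2\sqrt{2})$ can be checked on $d=2$ with $\bu=(1,1)\t$, $\bv=(1,-1)\t$, confirming that the proposed route is essentially optimal and comfortably delivers the stated bound $0.36$.
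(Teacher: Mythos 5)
Your proof is correct and is precisely the classical argument behind the result the paper simply cites (Lemma~3.3 of the referenced IPM survey, where the constant appears as $2^{-3/2}\approx 0.3536$, which the authors have rounded up to $0.36$). The split into same-sign and opposite-sign index sets, the transfer of the negative mass via $\bu\t\cdot\bv\geq 0$, and the AM--GM bound on the positive part are exactly the standard route, so there is nothing to add.
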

\begin{proof}\cite[Lemma 3.3]{IPM25ylater}.\end{proof}

\begin{lem}[Positive scalar-product]
	{Let $\Delta\bz$ as defined in Theorem~\ref{thm:Newton-PathStep}. }
	Then:
	$\Delta\bx\t\cdot(\Delta\bmu_L-\Delta\bmu_R)\geq 0$.
\end{lem}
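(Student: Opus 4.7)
The plan is to exploit the structure of the Newton system together with the positive semidefiniteness of $\bQ$ and $\omega\bI$. The first observation is that since $\bz \in \cN(\tau)$, by definition the first two block components of $F_\tau(\bz)$ vanish. Now $F_\htau$ differs from $F_\tau$ only in the third and fourth block components (the value of $\tau$ appears nowhere else in the definition of $F_\tau$), so the right-hand side of the linear system $DF(\bz)\cdot\Delta\bz = -F_\htau(\bz)$ has zero in its first two blocks.

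I would then read off the first two block rows of the Newton system using the Jacobian \eqref{eqn:DF_KKT}:
\begin{align*}
(\bQ+\omega\bI)\Delta\bx - \bA\t\Delta\blambda - \Delta\bmu_L + \Delta\bmu_R &= \bO,\\
\bA\Delta\bx + \omega\Delta\blambda &= \bO.
\end{align*}
From the second row I get $\Delta\blambda = -\tfrac{1}{\omega}\bA\Delta\bx$. Substituting into the first row and taking the inner product with $\Delta\bx\t$ yields
\[
\Delta\bx\t(\bQ+\omega\bI)\Delta\bx + \tfrac{1}{\omega}\|\bA\Delta\bx\|_2^2 = \Delta\bx\t(\Delta\bmu_L - \Delta\bmu_R).
\]

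Since $\bQ$ is symmetric positive semi-definite and $\omega>0$, the left-hand side is a sum of non-negative terms. The claim $\Delta\bx\t(\Delta\bmu_L - \Delta\bmu_R)\geq 0$ follows immediately. There is no real obstacle here; the only subtlety is recognizing that the right-hand side of the Newton system has zero in its top two blocks, which is what allows the cross terms involving $\Delta\blambda$ and the pair $(\Delta\bmu_L,\Delta\bmu_R)$ to collapse into a manifestly non-negative quadratic form in $\Delta\bx$.
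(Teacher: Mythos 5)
Your proof is correct and follows essentially the same route as the paper: the paper's own (very terse) proof states exactly the identity $\big(\omega\cdot\bI+\bQ+\tfrac{1}{\omega}\cdot\bA\t\cdot\bA\big)\cdot\Delta\bx= \Delta\bmu_L-\Delta\bmu_R$, which you derive by eliminating $\Delta\blambda$ from the first two block rows of the Newton system, and the conclusion then follows from positive semi-definiteness as you say. Your write-up simply makes explicit the steps (vanishing of the first two residual blocks for $\bz\in\cN(\tau)$, the substitution $\Delta\blambda=-\tfrac{1}{\omega}\bA\cdot\Delta\bx$) that the paper leaves implicit.
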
\label{Lem:Positive_scalar-product}

\begin{proof} From the linear equation system for $\Delta\bz$ we find the identity
	\begin{align*}
	\big(\omega\cdot\bI+\bQ+{1/\omega}\cdot\bA\t\cdot\bA\big)\cdot\Delta\bx= \Delta\bmu_L-\Delta\bmu_R\,.
	\end{align*}
	$\phantom{A}$
\end{proof}

Since the Newton step $\Delta\bz=\hbz-\bz$ solves exactly for the root of the linearization of $F_\htau$, we find
\begin{align*}
&F_\htau(\hbz)= \\
&\underbrace{\begin{pmatrix}
(\omega \cdot \bI + \bQ) \cdot (\bx + \Delta\bx) + \bc - \bA\t \cdot (\blambda+\Delta\blambda) -(\bmu_L+\Delta\bmu_L)+(\bmu_R+\Delta\bmu_R)\\
\bA \cdot(\bx+\Delta\bx)-\bb+\omega\cdot(\blambda+\Delta\blambda)\\
\bmu_L\cdot(\be+\bx)+\bmu_L\cdot\Delta\bx+\Delta\bmu_L\cdot(\be+\bx)-\htau\cdot\be\\
\bmu_L\cdot(\be+\bx)+\bmu_L\cdot\Delta\bx+\Delta\bmu_L\cdot(\be+\bx)-\htau\cdot\be
\end{pmatrix}}_{\text{linar part, solved to $\bO$ by the Newton step $\Delta\bz$}}\\
&+ \begin{pmatrix}
\bO_n\\
\bO_m\\
\phantom{-{}}\Delta\bmu_L\cdot\Delta\bx\\
-{}\Delta\bmu_R\cdot\Delta\bx
\end{pmatrix}
=
\begin{pmatrix}
\bO_n\\
\bO_m\\
\phantom{-{}}\Delta\bmu_L\cdot\Delta\bx\\
-{}\Delta\bmu_R\cdot\Delta\bx
\end{pmatrix}
\tageq\label{eqn:PatternF}\,.
\end{align*}
\begin{lem}[bound of $F_\htau(\hbz)$]\label{lem:bound_F_htau_hz}
Let $\tau>0$,  $\bz \in \cN(\tau)$ and $\htau$, $\Delta\bz$ and $\hbz$ as defined in Theorem~\ref{thm:Newton-PathStep}. Then,
	\begin{align}
	\|F_\htau(\hbz)\|_2\leq \theta\cdot\htau\label{eqn:BoundF}
	\end{align}
\end{lem}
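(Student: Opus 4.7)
The plan is to leverage the identity~\eqref{eqn:PatternF}, which already reduces $F_\htau(\hbz)$ to the two ``second-order'' blocks $(\bO_n,\bO_m,\Delta\bmu_L\cdot\Delta\bx,-\Delta\bmu_R\cdot\Delta\bx)$. Therefore $\|F_\htau(\hbz)\|_2^2 = \|\Delta\bmu_L\cdot\Delta\bx\|_2^2 + \|\Delta\bmu_R\cdot\Delta\bx\|_2^2$, and the task reduces to bounding these componentwise products in terms of the method parameters.

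Next, I would introduce the standard symmetric rescaling of the Newton step: define
$\bp_L := \sqrt{\bmu_L/(\be+\bx)}\cdot\Delta\bx$, $\bq_L := \sqrt{(\be+\bx)/\bmu_L}\cdot\Delta\bmu_L$,
$\bp_R := -\sqrt{\bmu_R/(\be-\bx)}\cdot\Delta\bx$, $\bq_R := \sqrt{(\be-\bx)/\bmu_R}\cdot\Delta\bmu_R$.
Dividing the third and fourth block rows of $DF(\bz)\cdot\Delta\bz = -F_\htau(\bz)$ componentwise by $\sqrt{\bmu_L\cdot(\be+\bx)}$ and $\sqrt{\bmu_R\cdot(\be-\bx)}$, respectively, produces the compact identities
\begin{align*}
\bp_L + \bq_L &= -\frac{\bmu_L\cdot(\be+\bx) - \htau\cdot\be}{\sqrt{\bmu_L\cdot(\be+\bx)}}, \\
\bp_R + \bq_R &= -\frac{\bmu_R\cdot(\be-\bx) - \htau\cdot\be}{\sqrt{\bmu_R\cdot(\be-\bx)}}.
\end{align*}
By construction $\Delta\bmu_L\cdot\Delta\bx = \bp_L\cdot\bq_L$ and $-\Delta\bmu_R\cdot\Delta\bx = \bp_R\cdot\bq_R$, so concatenating $\bp := (\bp_L,\bp_R)$ and $\bq := (\bq_L,\bq_R)$ gives $\|F_\htau(\hbz)\|_2 = \|\bp\cdot\bq\|_2$.

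The third step is to apply Lemma~\ref{Lem:UVleqU+V} to $\bp$ and $\bq$. The positivity hypothesis
$\bp\t\cdot\bq = \Delta\bx\t\cdot(\Delta\bmu_L - \Delta\bmu_R) \geq 0$
is exactly Lemma~\ref{Lem:Positive_scalar-product}, so $\|\bp\cdot\bq\|_2 \leq 0.36\cdot\|\bp+\bq\|_2^2$. The denominators appearing above are bounded below by $\sqrt{(1-\theta)\cdot\tau}$ via Lemma~\ref{lem:Denominator}, and the squared 2-norm of the combined numerator is at most $(\theta+\beta)^2\cdot\tau^2$ by Lemma~\ref{lem:Numerator} (the latter applies to the $\htau$-shifted residuals since its statement is phrased with $\htau = \sigma\cdot\tau$). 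Combining these ingredients yields the intermediate bound
\begin{align*}
\|F_\htau(\hbz)\|_2 \;\leq\; \frac{0.36\cdot(\theta+\beta)^2}{1-\theta}\cdot\tau.
\end{align*}

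The final step is to check that the method parameters $\theta,\beta,\sigma$ listed in Appendix~C satisfy the scalar inequality $0.36\cdot(\theta+\beta)^2/(1-\theta) \leq \sigma\cdot\theta$, so that the above bound rewrites as $\theta\cdot\htau$. I expect the main obstacle to be the careful bookkeeping of scalings and signs across the $L$ and $R$ blocks, in particular verifying that the sign flip in $\bp_R$ is exactly what is needed to make the identities for $\bp_L+\bq_L$ and $\bp_R+\bq_R$ compatible with Lemma~\ref{lem:Numerator} and for $\bp\t\cdot\bq$ to reproduce Lemma~\ref{Lem:Positive_scalar-product}; the concluding scalar inequality on $\theta,\beta,\sigma$ is then a routine consequence of the short-step parameter tuning in Appendix~C.
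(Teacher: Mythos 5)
Your proposal is correct and follows essentially the same route as the paper's proof: the same reduction via \eqref{eqn:PatternF}, the same symmetric rescaling (your $\bp,\bq$ are exactly the paper's $\bu,\bv$), the same use of Lemmas~\ref{Lem:Positive_scalar-product}, \ref{Lem:UVleqU+V}, \ref{lem:Numerator} and \ref{lem:Denominator}, and the same concluding parameter inequality.
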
\label{Lem:BoundF}

\begin{proof} From \eqref{eqn:PatternF} follows
	\begin{align*}
	\|F_\htau(\hbz)\|_2\equiv & \left\|\begin{pmatrix}
	\Delta\bx\cdot\Delta\bmu_L\\
	-\Delta\bx\cdot\Delta\bmu_R
	\end{pmatrix}\right\|_2 \\
	= & \Bigg\|\underbrace{\begin{pmatrix}
		(\be+\bx)^{-1/2} \cdot \bmu_L^{1/2} \cdot \Delta\bx\phantom{(-)}\\
		(\be-\bx)^{-1/2} \cdot \bmu_R^{1/2} \cdot (-\Delta\bx)
		\end{pmatrix}}_{\bu} \ \cdot \ \underbrace{\begin{pmatrix}
		(\be+\bx)^{1/2}\cdot\bmu_L^{-1/2} \cdot \Delta\bmu_L\\
		(\be-\bx)^{1/2}\cdot\bmu_R^{-1/2} \cdot \Delta\bmu_R
		\end{pmatrix}}_{\bv}\Bigg\|_2\,.
	\end{align*}
	{Note that $\Delta\bx\t\cdot(\Delta\bmu_L-\Delta\bmu_R)\geq 0$ by Lemma
		\ref{Lem:Positive_scalar-product}}. Then,
	Lemma\,\ref{Lem:UVleqU+V} with  $\bu\t\cdot\bv\equiv\Delta\bx\t\cdot(\Delta\bmu_L-\Delta\bmu_R)$  leads to:
	\begin{align*}
	&\|F_\htau(\hbz)\|_2\\
	\leq& {}0.36{}\cdot\left\|\begin{pmatrix}
	(\be+\bx)^{-1/2} \cdot \bmu_L^{1/2} \cdot \Delta\bx\phantom{(-)} + (\be+\bx)^{1/2}\cdot\bmu_L^{-1/2} \cdot \Delta\bmu_L\\
	(\be-\bx)^{-1/2} \cdot \bmu_R^{1/2} \cdot (-\Delta\bx) + (\be-\bx)^{1/2}\cdot\bmu_R^{-1/2} \cdot \Delta\bmu_R
	\end{pmatrix}\right\|_2^2\\
	=& {}0.36{}\cdot\left\|\begin{pmatrix}
	(\be+\bx)^{-1/2} \cdot \bmu_L^{-1/2} \cdot \Big(\,(\be+\bx)\cdot\Delta\bmu_L+\bmu_L\cdot\Delta\bx\,\Big)\\
	(\be-\bx)^{-1/2} \cdot \bmu_R^{-1/2} \cdot \Big(\,(\be-\bx)\cdot\Delta\bmu_R-\bmu_R\cdot\Delta\bx\,\Big)
	\end{pmatrix}\right\|_2^2
	\end{align*}
	The Newton step $\Delta\bz$ solves the linearization
	\begin{align*}
	(\be+\bx)\cdot\Delta\bmu_L+\bmu_L\cdot\Delta\bx &=-(\be+\bx)\cdot\bmu_L+\htau\cdot\be\,,\\
	(\be-\bx)\cdot\Delta\bmu_R-\bmu_R\cdot\Delta\bx &=-(\be-\bx)\cdot\bmu_R+\htau\cdot\be\,.
	\end{align*}
	Inserting this into the above yields
	\begin{align*}
	\|F_\htau(\hbz)\|_2 &\leq {}0.36{}\cdot\left\|\begin{pmatrix}
	(\be+\bx)^{-1/2} \cdot \bmu_L^{-1/2} \cdot \Big(\,(\be+\bx)\cdot\bmu_L-\htau\cdot\be\,\Big)\\
	(\be-\bx)^{-1/2} \cdot \bmu_R^{-1/2} \cdot \Big(\,(\be-\bx)\cdot\bmu_R-\htau\cdot\be\,\Big)
	\end{pmatrix}\right\|_2^2\\
	&\leq {}0.36{}\cdot\frac{\left\|\begin{pmatrix}
		(\be+\bx)\cdot\bmu_L-\htau\cdot\be\\
		(\be-\bx)\cdot\bmu_R-\htau\cdot\be
		\end{pmatrix}\right\|_2^2}{\operatornamewithlimits{min}_{1\leq j\leq n}\lbrace\, (1+x^{[j]})\cdot\mu_L^{[j]}\,,\,(1-x^{[j]})\cdot\mu_R^{[j]}\,\rbrace}\,.
	\end{align*}
	Using Lemma\,\ref{lem:Numerator} for the numerator and Lemma\,\ref{lem:Denominator} for the denominator yields
	\begin{align*}
	\|F_\htau(\hbz)\|_2 &\leq {}0.36{}\cdot \frac{(\beta+\theta)^2 \cdot \tau^2}{(1-\theta)\cdot \tau} \leq {\theta}\cdot\sigma\cdot{\tau} = \theta \cdot \htau\,.
	\end{align*}
	{The last inequality holds because the method parameters $\beta,\theta,\sigma$ defined in \ConstantsRef satisfy
		\begin{align*}
		\frac{(\beta+\theta)^2}{1-\theta} \leq \theta \cdot \sigma\,.
		\end{align*}}
	\end{proof}
	
The following result differs from Lemma\,\ref{lem:Denominator} in that it holds for $\hbz$.
\begin{lem}\label{lem:auxInteriorZ}
	Let $\hbz$ as defined in Theorem~\ref{thm:Newton-PathStep}.
	It holds:
	\begin{align}
	\operatornamewithlimits{min}_{1\leq j\leq n}\left\lbrace\, (1+\hat{x}^{[j]})\cdot\hat{\mu}_L^{[j]},\,(1-\hat{x}^{[j]})\cdot\hat{\mu}_R^{[j]} \,\right\rbrace \geq (1-\theta)\cdot \htau\label{eqn:LowerBoundhz}
	\end{align}
\end{lem}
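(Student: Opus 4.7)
The plan is to mirror the proof of Lemma~\ref{lem:Denominator} but at the updated iterate $\hbz$, using Lemma~\ref{lem:bound_F_htau_hz} in place of the defining $\cN(\tau)$-inclusion. Concretely, I would first read off blocks~3 and~4 of the componentwise identity \eqref{eqn:PatternF}, which assert
\begin{align*}
(\be+\hbx)\cdot\hbmu_L - \htau \cdot \be &= \phantom{-}\Delta\bmu_L\cdot\Delta\bx\,,\\
(\be-\hbx)\cdot\hbmu_R - \htau \cdot \be &= -\Delta\bmu_R\cdot\Delta\bx\,,
\end{align*}
so that the pair $\big((\be+\hbx)\cdot\hbmu_L - \htau\cdot\be,\,(\be-\hbx)\cdot\hbmu_R - \htau\cdot\be\big)$ is exactly the last $2n$ coordinates of $F_\htau(\hbz)$.

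Next I would apply $\|\cdot\|_\infty \leq \|\cdot\|_2$ together with Lemma~\ref{lem:bound_F_htau_hz} to obtain
\begin{align*}
\left\|\begin{pmatrix}(\be+\hbx)\cdot\hbmu_L-\htau\cdot\be\\(\be-\hbx)\cdot\hbmu_R-\htau\cdot\be\end{pmatrix}\right\|_\infty \;\leq\; \|F_\htau(\hbz)\|_2 \;\leq\; \theta\cdot\htau\,.
\end{align*}
Reading this componentwise for each $j \in \lbrace 1,\ldots,n\rbrace$ yields the two-sided bounds $|(1+\hat{x}^{[j]})\cdot\hat{\mu}_L^{[j]} - \htau| \leq \theta\cdot\htau$ and $|(1-\hat{x}^{[j]})\cdot\hat{\mu}_R^{[j]} - \htau| \leq \theta\cdot\htau$, from which the one-sided lower bounds $(1+\hat{x}^{[j]})\cdot\hat{\mu}_L^{[j]} \geq (1-\theta)\cdot\htau$ and $(1-\hat{x}^{[j]})\cdot\hat{\mu}_R^{[j]} \geq (1-\theta)\cdot\htau$ are immediate, which is the claim.

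I do not anticipate a real obstacle here: all of the heavy lifting was done in Lemma~\ref{lem:bound_F_htau_hz}, where the quadratic-in-$\Delta\bz$ residual left by the Newton step was controlled via Lemma~\ref{Lem:UVleqU+V} together with the sign property $\Delta\bx\t\cdot(\Delta\bmu_L-\Delta\bmu_R)\geq 0$. The present lemma is essentially a repackaging of that $\ell^2$ bound as a componentwise two-sided bound on the barrier products at $\hbz$; the only small subtlety is to correctly unwind the definitions of blocks~3 and~4 of $F_\htau$ at $\hbz$ so that the identity \eqref{eqn:PatternF} can be read as a statement directly about $(\be\pm\hbx)\cdot\hbmu_{L,R}-\htau\cdot\be$ rather than the raw Newton residual.
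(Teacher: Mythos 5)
Your proposal is correct and is exactly the argument the paper intends: its proof of this lemma is the one-line instruction ``use \eqref{eqn:BoundF} and adapt the proof of Lemma~\ref{lem:Denominator},'' which amounts precisely to noting that blocks 3 and 4 of $F_\htau(\hbz)$ are the barrier residuals at $\hbz$, bounding their $\infty$-norm by $\|F_\htau(\hbz)\|_2 \leq \theta\cdot\htau$ via Lemma~\ref{lem:bound_F_htau_hz}, and reading off the componentwise lower bound. No gaps.
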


	\begin{proof}Use \eqref{eqn:BoundF} and adapt the proof from Lemma\,\ref{lem:Denominator}.\end{proof}

\largeparbreak

In the following we show that the components of $\hbz$ remain interior.
\begin{lem}[Strict interiorness]\label{Lem:Interiorness}
	Consider $\hbz=(\hbx,\hblambda,\hbmu_L,\hbmu_R)$ as defined in Theorem~\ref{thm:Newton-PathStep}.  It holds $\|\hbx\|_\infty <1$, $\hbmu_L>\bO$, $\hbmu_R>\bO$\,.
\end{lem}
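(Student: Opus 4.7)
The plan is to track the complementarity products along the line segment $\bz(t) := \bz + t \cdot \Delta\bz$ for $t \in [0,1]$ and invoke a continuity argument. Starting from the fact that the Newton step exactly solves the linearization
\begin{align*}
(\be+\bx)\cdot\Delta\bmu_L+\bmu_L\cdot\Delta\bx &= -(\be+\bx)\cdot\bmu_L + \htau\cdot\be\,,\\
(\be-\bx)\cdot\Delta\bmu_R-\bmu_R\cdot\Delta\bx &= -(\be-\bx)\cdot\bmu_R + \htau\cdot\be\,,
\end{align*}
expanding the product $(1+x^{[j]}(t))\cdot\mu_L^{[j]}(t)$ in powers of $t$ and substituting the above identity component-wise yields
\begin{align*}
(1+x^{[j]}(t))\cdot\mu_L^{[j]}(t) = (1-t)\cdot(1+x^{[j]})\cdot\mu_L^{[j]} + t\cdot\htau + t^2\cdot \Delta x^{[j]}\cdot\Delta\mu_L^{[j]}\,,
\end{align*}
and analogously for the right-side products.

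First I would bound the quadratic cross term. By Lemma~\ref{lem:bound_F_htau_hz} it holds $\|F_\htau(\hbz)\|_2 \leq \theta\cdot\htau$, and from the expression \eqref{eqn:PatternF} for $F_\htau(\hbz)$ this immediately gives componentwise $|\Delta x^{[j]}\cdot\Delta\mu_L^{[j]}| \leq \theta\cdot\htau$ and $|\Delta x^{[j]}\cdot\Delta\mu_R^{[j]}| \leq \theta\cdot\htau$. Since $t^2 \leq t$ on $[0,1]$ and $(1-t)\cdot(1+x^{[j]})\cdot\mu_L^{[j]} \geq 0$ there (using $\bz \in \cN(\tau)$), the identity above yields
\begin{align*}
(1+x^{[j]}(t))\cdot\mu_L^{[j]}(t) \geq t\cdot\htau - t\cdot\theta\cdot\htau = t\cdot(1-\theta)\cdot\htau > 0 \qquad \forall\, t \in (0,1]\,,
\end{align*}
and likewise for $(1-x^{[j]}(t))\cdot\mu_R^{[j]}(t)$. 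Thus the two factors in each product are nowhere simultaneously zero, and in particular neither factor can vanish on $(0,1]$ (vanishing of either factor would force the product to vanish, contradicting the bound above).

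Next, I would invoke continuity. At $t=0$ we have $1 + x^{[j]} > 0$, $1 - x^{[j]} > 0$, $\mu_L^{[j]} > 0$, $\mu_R^{[j]} > 0$ since $\bz \in \cN(\tau)$. All four quantities depend continuously on $t$, and by the previous paragraph none of them can cross zero on $(0,1]$. Hence $1 + \hat x^{[j]} > 0$, $1 - \hat x^{[j]} > 0$, $\hat\mu_L^{[j]} > 0$, $\hat\mu_R^{[j]} > 0$ for every index $j$, which is exactly the desired $\|\hbx\|_\infty < 1$, $\hbmu_L > \bO$, $\hbmu_R > \bO$.

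The main obstacle is setting up the exact expansion along the segment and verifying that the quadratic cross term in $t$ is dominated by the linear term $t\cdot\htau$ on all of $[0,1]$; everything else is essentially the observation that a continuous positive function cannot cross zero. The bound $t^2 \leq t$ on $[0,1]$ combined with Lemma~\ref{lem:bound_F_htau_hz} is what makes the dominance argument work uniformly in $t$, so reusing Lemma~\ref{lem:bound_F_htau_hz} is the crucial ingredient.
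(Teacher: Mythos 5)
Your proof is correct, but it proceeds differently from the paper's. The paper argues by contradiction at the endpoint: it first shows (Lemma~\ref{lem:auxInteriorZ}) that $(1+\hat{x}^{[j]})\cdot\hat{\mu}_L^{[j]}\geq(1-\theta)\cdot\htau>0$, so the two factors at $\hbz$ share a sign; if both were negative, then $-\Delta x^{[j]}\geq 1+x^{[j]}>0$ and $-\Delta\mu_L^{[j]}\geq\mu_L^{[j]}>0$ would force $\Delta x^{[j]}\cdot\Delta\mu_L^{[j]}\geq(1+x^{[j]})\cdot\mu_L^{[j]}\geq(1-\theta)\cdot\tau$ by Lemma~\ref{lem:Denominator}, contradicting $\Delta x^{[j]}\cdot\Delta\mu_L^{[j]}\leq\|F_\htau(\hbz)\|_\infty\leq\theta\cdot\sigma\cdot\tau<(1-\theta)\cdot\tau$. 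You instead run a homotopy along $\bz+t\cdot\Delta\bz$: the exact expansion $(1+x^{[j]}(t))\cdot\mu_L^{[j]}(t)=(1-t)\cdot(1+x^{[j]})\cdot\mu_L^{[j]}+t\cdot\htau+t^2\cdot\Delta x^{[j]}\cdot\Delta\mu_L^{[j]}$ is right, the componentwise bound $|\Delta x^{[j]}\cdot\Delta\mu_L^{[j]}|\leq\theta\cdot\htau$ from \eqref{eqn:PatternF} and Lemma~\ref{lem:bound_F_htau_hz} is legitimately available (that lemma's proof only uses properties of $\bz\in\cN(\tau)$, so there is no circularity), and the intermediate-value argument then keeps every factor positive on $[0,1]$. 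Both proofs hinge on the same two quantitative facts — strict interiorness of $\bz$ and the second-order bound $\|\Delta\bx\cdot\Delta\bmu\|_\infty\leq\theta\cdot\htau$ — but your route is the classical neighborhood-invariance argument from the IPM literature and is somewhat more self-contained: it does not need Lemma~\ref{lem:auxInteriorZ} or Lemma~\ref{lem:Denominator} to fix the signs at $\hbz$, and as a byproduct at $t=1$ it re-derives the lower bound $(1+\hat{x}^{[j]})\cdot\hat{\mu}_L^{[j]}\geq(1-\theta)\cdot\htau$ of Lemma~\ref{lem:auxInteriorZ}. The paper's version is shorter at the price of invoking those two auxiliary lemmas; yours trades that for the explicit segment expansion and the $t^2\leq t$ domination, which, as you note, is the only place where care is needed.
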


\begin{proof} (by contradiction)
	
	\underline{Case 1: left boundary}\\
	Assume $\exists j \in \lbrace1,...,n\rbrace$ such that at least either $1+\hat{x}^{[j]}\leq 0$ or $\hat{\mu}_L^{[j]}\leq 0$. Due to Lemma\,\ref{lem:auxInteriorZ} it holds that $1+\hat{x}^{[j]}$ and $\hat{\mu}_L^{[j]}$ must have the same sign and differ from zero. Following the initial assumption they are strictly negative. Since $1+{x}^{[j]}$ and ${\mu}_L^{[j]}$ are strictly positive, it must hold
	\begin{align*}
	-\Delta{}x^{[j]} \geq 1+x^{[j]}\quad\text{and}\quad-\Delta{}\mu_L^{[j]} \geq \mu_L^{[j]}\ .
	\end{align*}
	Using \ref{lem:Denominator}, \eqref{eqn:PatternF} and Lemma~\ref{Lem:Interiorness}\, this can be led to contradiction as it holds
	\begin{align*}
	(1-\theta) \cdot \tau \leq (1+x^{[j]}) \cdot \mu_L^{[j]} \leq \Delta{}x^{[j]}\cdot \Delta{}\mu_L^{[j]} \leq \|F_\htau(\hbz)\|_\infty\leq \theta \cdot \sigma \cdot \tau \leq \theta\cdot\tau
	\end{align*}
	and $\theta < 1-\theta$. Thus, $\hbx>-\be$ and $\hbmu_L>\bO$\,.
	
	\underline{Case 2: right boundary}\\
	This case is analogous to Case 1 and shows $\hbx<\be$ and $\hbmu_R>\bO$\,.
\end{proof}

To show Theorem~\ref{thm:Newton-PathStep}, we combine the results:
	\begin{proof}[Proof of Theorem\,\ref{thm:Newton-PathStep}] From \eqref{eqn:PatternF} we know the first two components of $F_\htau(\hbz)$ are zeros. From there, together with Lemma\,\ref{lem:bound_F_htau_hz}, we find $\|(\br\hp{3},\br\hp{4})\|_2 \leq \theta \cdot \htau$. The strict interiorness of $\hbz$ is shown in Lemma\,\ref{Lem:Interiorness}.
\end{proof}

In preparation for proofs of the stability properties of our method, we introduce here the following related minor result.
\begin{cor}[Complementarity refinement]\label{cor:CompRef}
	Given $\bz \in \cN(\tau)$. Compute $\bz_1 := \bz - DF(\bz)\inv\cdot F_\tau(\bz)$. Then $\bz_1 \in \cN_h(\tau)$.
\end{cor}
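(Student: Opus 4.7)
The plan is to retrace the argument used for Theorem~\ref{thm:Newton-PathStep}, specialized to the case $\htau = \tau$, and observe that setting $\sigma = 1$ gives us a tighter bound on the complementarity residual than the full path step, precisely sharp enough to land in the half-width neighborhood $\cN_h(\tau)$ rather than merely $\cN(\tau)$.

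First I would reapply the identity \eqref{eqn:PatternF} to $\bz_1 = \bz + \Delta\bz$, with $\Delta\bz$ solving $DF(\bz)\cdot\Delta\bz = -F_\tau(\bz)$. As in the path-step analysis, the first two block components of $F_\tau(\bz_1)$ are zero since the Newton step annihilates the linear residual, and the last two block components reduce to $(\Delta\bx\cdot\Delta\bmu_L,\,-\Delta\bx\cdot\Delta\bmu_R)$. The key improvement comes in the analogue of Lemma~\ref{lem:Numerator}: writing the right-hand side of the complementarity equations as $\bu+\bv$ with $\bv = \tau\cdot(1-\sigma)\cdot(\be,\be)\t$, the choice $\htau=\tau$ forces $\bv=\bO$, so the numerator bound sharpens from $(\theta+\beta)^2\cdot\tau^2$ to simply $\theta^2\cdot\tau^2$. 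Lemma~\ref{lem:Denominator} still provides the lower bound $(1-\theta)\cdot\tau$ on the denominator, and Lemma~\ref{Lem:Positive_scalar-product} still supplies the sign condition needed to invoke Lemma~\ref{Lem:UVleqU+V}.

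Running the chain of inequalities from Lemma~\ref{lem:bound_F_htau_hz} with these refined bounds yields
\begin{align*}
\|F_\tau(\bz_1)\|_2 \;\leq\; 0.36\cdot\frac{\theta^2\cdot\tau^2}{(1-\theta)\cdot\tau} \;=\; \frac{0.36\cdot\theta}{1-\theta}\cdot\theta\cdot\tau.
\end{align*}
The crux of the proof is then to verify that the method parameter $\theta$ from \ConstantsRef satisfies $0.36\cdot\theta/(1-\theta)\leq 0.5$, equivalently $\theta\leq 1/1.72$. This inequality is built into the definition of $\theta$ (which is chosen small enough to already satisfy the stricter condition $(\beta+\theta)^2/(1-\theta)\leq\theta\cdot\sigma$ used in the path-step proof), so the bound $\|F_\tau(\bz_1)\|_2\leq 0.5\cdot\theta\cdot\tau$ follows.

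Finally, strict interiorness of $\bz_1$ (that is, $\|\bx_1\|_\infty<1$ and $\bmu_{L,1},\bmu_{R,1}>\bO$) is obtained by the same contradiction argument as in Lemma~\ref{Lem:Interiorness}, using the just-established bound $\|F_\tau(\bz_1)\|_\infty\leq 0.5\cdot\theta\cdot\tau<(1-\theta)\cdot\tau$ together with Lemma~\ref{lem:Denominator} applied at $\bz$. Combining the sharpened residual bound with strict interiorness yields $\bz_1\in\cN_h(\tau)$. The main obstacle is essentially bookkeeping: one must confirm that the numerical constant defining $\theta$ in \ConstantsRef implies the threshold $0.36\cdot\theta/(1-\theta)\leq 0.5$; since $\theta$ is already chosen well below this threshold for the path-step argument to work, no additional constraint on the method parameters is incurred.
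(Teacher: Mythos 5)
Your proposal is correct and follows essentially the same route as the paper: the paper's proof likewise reruns Lemma~\ref{lem:bound_F_htau_hz} with $\beta$ replaced by $0$ (since $\htau=\tau$), obtains $\|F_\tau(\bz_1)\|_2\leq 0.36\cdot\theta^2\cdot\tau/(1-\theta)\leq 0.5\cdot\theta\cdot\tau$, and concludes via the parameter choice $\theta\leq 0.3$. Your explicit verification of strict interiorness via the Lemma~\ref{Lem:Interiorness} argument is a detail the paper leaves implicit, but it is the right supplement, not a deviation.
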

\begin{proof}
Following the lines of Lemma\,\ref{lem:bound_F_htau_hz} and replacing $\beta$ by $0$ everywhere in the proof, since the value of $\tau$ is unchanged, we get
\begin{align*}
	\|F_\tau(\hbz)\|_2 &\leq {}0.36{}\cdot \frac{\theta^2 \cdot \tau^2}{(1-\theta)\cdot \tau} \le 0.5 \cdot \theta \cdot \tau\,.
	\end{align*}
 This follows from
	\begin{align*}
		\frac{\theta^2}{1-\theta} \leq 0.5 \cdot \theta\,.
	\end{align*}
\end{proof}

\subsection{Proofs of the Termination section}
\begin{proof}[Proof of Lemma~\ref{lem:relation_QP_regQP}]
By optimality of $\bx_\omega^\star$ and due to $\bx^\star \in \overline{\Omega}$ it holds
\begin{align*}
q_\omega(\bx^\star_\omega) &\leq q_\omega(\bx^\star)\\
&\Rightarrow\\
 q(\bx^\star_\omega) + \frac{\omega}{2} \cdot \|\bx_\omega^\star\|_2^2 + \frac{1}{2 \cdot \omega} \cdot \|\bA \cdot \bx^\star_\omega- \bb\|_2^2 &\leq q(\bx^\star) + \frac{\omega}{2} \cdot \|\bx^\star\|_2^2 + \frac{1}{2 \cdot \omega} \cdot \boRes^2 \tageq\label{eqn:Bound_q_xOmega}
\end{align*}
To show the first proposition we omit $\frac{\omega}{2} \cdot \|\bx_\omega^\star\|_2^2\geq 0$ on the left-hand side and cancel $\frac{1}{2 \cdot \omega} \cdot \|\bA \cdot \bx^\star_\omega- \bb\|_2^2$ on the left-hand side against its lower bound $\frac{1}{2 \cdot \omega} \cdot \boRes^2$ on the right-hand side. We obtain
\begin{align*}
q(\bx^\star_\omega) \leq q(\bx^\star) + \underbrace{\frac{\omega}{2} \cdot \|\bx^\star\|_2^2}_{\leq \tol/4}\,,
\end{align*}
{from the definition of $\omega$}.
For the second proposition we subtract $q(\bx_\omega^\star)$ from \eqref{eqn:Bound_q_xOmega} and drop $\frac{\omega}{2} \cdot \|\bx_\omega^\star\|_2^2\geq 0$ on the left-hand side. We arrive at
\begin{align*}
\frac{1}{2 \cdot \omega} \cdot \|\bA \cdot \bx^\star_\omega- \bb\|_2^2 &\leq q(\bx^\star)-q(\bx_\omega^\star) + \frac{\omega}{2} \cdot \|\bx^\star\|_2^2 + \frac{1}{2 \cdot \omega} \cdot \boRes^2\,.
\end{align*}
We can use $q(\bx^\star)-q(\bx_\omega^\star) \leq 2 \cdot \operatornamewithlimits{sup}_{\tbx\in\overline{\Omega}}\lbrace\,|q(\tbx)|\,\rbrace \leq 2 \cdot C_q$ and $\|\bx^\star\|_2\leq \Cx$. Then
\begin{align*}
\frac{1}{2 \cdot \omega} \cdot \|\bA \cdot \bx^\star_\omega- \bb\|_2^2 &\leq 2 \cdot C_q + \frac{\omega}{2} \cdot \Cx^2 + \frac{1}{2 \cdot \omega} \cdot \boRes^2\,.
\end{align*}
Multiplication of this with $2 \cdot \omega\leq2$ and taking the square-root yields
\begin{align*}
\|\bA \cdot \bx_\omega^\star - \bb\|_2 &\leq \sqrt{\chi^2 + \omega \cdot (4 \cdot C_q + n)\,} \leq \chi + \underbrace{\sqrt{\omega} \cdot \sqrt{4 \cdot C_q + n}}_{\leq \tol/4}\,,
\end{align*}
where the bound in the last term holds due to our  choice for the method parameter $\omega$.
\end{proof}

\begin{proof}[Proof of Lemma~\ref{thm:EpsOptimality}]
We show that $\bx$ given by the first $n$ entries of  $\bz \in \cN(\tau)$ for $\tau\leq\tau_E$ is an $\varepsilon$-optimal solution to \eqref{eq:regQP}. To this end we first review a result from duality theory.

Consider the following strictly convex quadratic program 
\begin{equation}\tag{\prQP}
	\begin{aligned}
	\min_{\bu \in \R^p} & & \varphi(\bu):= & \frac{1}{2} \cdot \bu\t \cdot \bH \cdot \bu + \bg\t\cdot \bu\\
	\text{subject to} & & \bC \cdot \bu \leq & \bd
	\end{aligned}\label{eqn:QP_P}
\end{equation}
and its dual program 
\begin{equation}\tag{\duQP}
	\begin{aligned}
	\max_{\bv \in \R^q} & & \psi(\bv):= & - \frac{1}{2}\cdot\bv\t\cdot\bC\cdot\bH\inv \cdot \bC\t \cdot \bv\\
	& & & - (\bd + \bC\cdot\bH\inv \cdot \bg)\t\cdot \bv\\
	& & & - \frac{1}{2} \cdot \bg\t\cdot\bH\inv\cdot\bg\\
	\text{subject to} & & \bv \geq & \bO
	\end{aligned}\label{eqn:QP_D}
\end{equation}
for $\bH \in \R^{p \times p}$ symmetric positive definite, $\bg \in \R^p$, $\bC \in \R^{q \times p}$ and $\bd \in \R^q$. We write $\bu^\star$, $\bv^\star$ for optimizers of \eqref{eqn:QP_P}, \eqref{eqn:QP_D}. We call $(\bu,\bv)$ a feasible primal-dual pair if $\bC \cdot \bu\leq \bd$ and $\bv \geq \bO$. We call
\begin{align}
\bH\cdot\bu + \bg + \bC\t \cdot \bv = \bO \label{eqn:GradientCondition}
\end{align}
the gradient condition. We say $\bu$ is $\varepsilon$-optimal for $\varepsilon\geq 0$ if $\bu$ is feasible and
\begin{align*}
\varphi(\bu)\leq \varphi(\bu^\star)+\varepsilon\,.
\end{align*}

\begin{boxThm}[Duality gap bounds optimality gap]\label{Thm:OptimalityGap}
	Given a feasible primal-dual pair $(\bu,\bv)$ that satisfies the gradient condition. Then it holds:
	\begin{subequations}
		\begin{align}
		\varphi(\bu)\geq \varphi(\bu^\star)&=\psi(\bv^\star)\geq \psi(\bv)\,,\label{eqn:ThmStrictDuality}\\
		\varphi(\bu) - \psi(\bv) &= \bv\t \cdot (\bd - \bC \cdot \bu)\,.\label{eqn:OptimalityGap}
		\end{align}
	\end{subequations}
	I.e., $\bu$ is $\varepsilon$-optimal for $\varepsilon=\bv\t \cdot (\bd - \bC \cdot \bu)$.
\end{boxThm}

The KKT conditions of \eqref{eqn:QP_P} are
	\begin{subequations}
		\label{eqn:KKT_P}
		\begin{align}
		\bH \cdot \bu + \bg + \bC\t \cdot \by &= \bO \label{eqn:KKT_P:1} \\
		\bd - \bC \cdot \bu =: \bs &\geq \bO\\
		\by &\geq \bO\\
		\by \cdot \bs &= \bO
		\end{align}
	\end{subequations}
	with Lagrange multipliers $\by \in \R^q$ and slacks $\bs \in \R^q$. The KKT conditions of \eqref{eqn:QP_D} are
	\begin{subequations}
		\label{eqn:KKT_D}
		\begin{align}
		\bC\cdot\bH\inv\cdot\bC\t \cdot \bv + (\bd+\bC\cdot \bH\inv \cdot \bg) -\bw &= \bO\\
		\bw &\geq \bO\\
		\bv &\geq \bO\\
		\bv \cdot \bw &= \bO
		\end{align}
	\end{subequations}
	with Lagrange multipliers $\bw \in \R^q$. Both problems are strictly convex/concave, thus the solutions $(\bu^\star,\bs^\star,\by^\star),(\bv^\star,\bw^\star)$ of both KKT systems are unique. Multiplying \eqref{eqn:KKT_P:1} fom left with $\bC\cdot\bH\inv$ yields
	\begin{align*}
	\bC \cdot \bH\inv \cdot \bC\t\cdot \by + \bC \cdot \bH\inv \cdot \bg + \underbrace{\bC\cdot\bu}_{= \bd - \bs} = \bO\,.
	\end{align*}
	Considering this while comparing \eqref{eqn:KKT_P} with \eqref{eqn:KKT_D}, we find that $\bv^\star = \by^\star$ and $\bs^\star = \bw^\star$ hold due to equivalence of \eqref{eqn:KKT_P} and \eqref{eqn:KKT_D}.
	
	From insertion of \eqref{eqn:GradientCondition} in $\varphi(\bu)$ we find
	\begin{align*}
	\varphi(\bu) =& \frac{1}{2} \cdot (\bv\t\cdot\bC+\bg\t)\cdot\bH\inv\cdot\bH\cdot\bH\inv\cdot(\bg+\bC\t\cdot\bv)\\
	&\quad-\bg\t\cdot\bH\inv\cdot(\bg+\bC\t\cdot\bv)\\
	=&-\frac{1}{2}\cdot\bg\t\cdot\bH\inv\cdot\bg + \frac{1}{2}\cdot\bv\t\cdot\bC\cdot\bH\inv\cdot\bC\t\cdot\bv
	\end{align*}
	and for $\psi(\bv)$ we find
	\begin{align*}
	\psi(\bv) =& -\frac{1}{2}\cdot\bv\t\cdot\bC\cdot\bH\inv\cdot\bC\t\cdot\bv-\bv\t\cdot(\bd-\bC\cdot\bu-\bC\cdot\bH\inv\cdot\bC\t\cdot\bv) \\
	&\quad - \frac{1}{2}\cdot\bg\t\cdot\bH\inv\cdot\bg \\[7pt]
	=& \frac{1}{2}\cdot\bv\t\cdot\bC\cdot\bH\inv\cdot\bC\t\cdot\bv - \frac{1}{2}\cdot\bg\t\cdot\bH\inv\cdot\bg - \bv\t\cdot(\bd-\bC\cdot\bu)\,.
	\end{align*}
	Substraction of the above expressions for $\varphi(\bu)$ and $\psi(\bv)$ shows the second proposition. The first proposition follows easily since $\varphi(\bu)$ and $\psi(\bv)$ are upper and lower bounds for the common optimality value $\varphi(\bu^\star)=\psi(\bv^\star)$. \end{proof}

There holds further the following error bound.
\begin{boxThm}[Optimality gap bounds distance]\label{Thm:DistLeqOptgap}
	Given an $\varepsilon$-optimal solution $\bu$ to \eqref{eqn:QP_P}. Then it holds
	\begin{align}
	\|\underbrace{\bu^\star-\bu}_{\delta\bu}\|_\bH \leq \sqrt{2 \cdot \varepsilon}\,,
	\end{align}
	where $\|\cdot\|_\bH$ is the induced norm of $\bH$.
\end{boxThm}

\begin{proof} Since $\bu^\star$ is optimal, it is $\nabla \varphi(\bu^\star)\t\cdot\delta\bu\geq 0$ for every feasible direction $\delta\bu$. We find:
	\begin{align*}
	\varepsilon \geq \varphi(\bu)-\varphi(\bu^\star) = \underbrace{\nabla\varphi(\bu^\star)\t\cdot\delta\bu}_{\geq 0} + \frac{1}{2}\cdot\underbrace{\delta\bu\t\cdot\nabla^2\varphi(\bu^\star)\cdot\delta\bu}_{\|\delta\bu\|^2_\bH}\geq \frac{1}{2} \cdot \|\delta\bu\|^2_\bH
	\end{align*}
\end{proof}
In analogy, the following bound can be shown for an $\varepsilon$-optimal solution $\bv$ of \eqref{eqn:QP_D}:
\begin{align*}
	\|\bv-\bv^\star\|_\bV \leq \sqrt{2 \cdot \varepsilon}\tageq\label{eqn:DualOptBound}
\end{align*}
where $\bV:= \bC\cdot\bH\inv \cdot \bC\t$. If $\bV$ is not positive definite then $\|\cdot\|_\bV$ means the induced semi-norm.

We can express \eqref{eq:regQP} as \eqref{eqn:QP_P} by using
\begin{subequations}
	\begin{align}
	\bH&= \omega \cdot \bI + \bQ + \frac{1}{\omega} \cdot \bA\t\cdot\bA\,,\quad&\quad
	\bg&= \bc - \frac{1}{\omega} \cdot \bA\t\cdot\bb\,,\\
	\bC&= \begin{bmatrix}
	-{}\bI_n\\
	\phantom{-}{}\bI_n
	\end{bmatrix}\,,\quad&\quad
	\bd&= \begin{pmatrix}
	\be_n\\
	\be_n
	\end{pmatrix}\,.
	\end{align}\label{eqn:DefH}
\end{subequations}
The gradient condition for \eqref{eq:regQP} is then
\begin{align}
\Big(\omega \cdot \bI + \bQ + \frac{1}{\omega} \cdot \bA\t\cdot\bA\Big) \cdot \bu + \bc - \frac{1}{\omega}\cdot \bA\t\cdot\bb + [\,-\bI \ \ \bI\,]\cdot \bv=\bO\,.
\end{align}

\largeparbreak
Now we have everything in hand prove Lemma~\ref{thm:EpsOptimality}\,:
\begin{proof}[Proof of Lemma~\ref{thm:EpsOptimality}] Consider the component $\bx$ of $\bz \in \cN(\tau)$\,. Note that $F_\tau(\bz)=(\bO,\bO,\br\hp{3},\br\hp{4})$ and 
	consider $\bx,\bmu_L,\bmu_R$. We find they satisfy the gradient condition with $\bu = \bx$ and $\bv = (\bmu_L,\bmu_R)$. Using Theorem \ref{Thm:OptimalityGap} we find $\bx$ is an
	$\varepsilon$-optimal solution to \eqref{eq:regQP} with the following bound for $\varepsilon$:
	\begin{align*}
	\varepsilon &= \begin{pmatrix}
	\bmu_L\\
	\bmu_R
	\end{pmatrix}\t \cdot \begin{pmatrix}
	\be + \bx\\
	\be - \bx
	\end{pmatrix}
	\leq \left\|\begin{pmatrix}
	\bmu_L \cdot( \be + \bx)\\
	\bmu_R \cdot( \be - \bx)
	\end{pmatrix} - \tau \cdot \begin{pmatrix}
	\be\\
	\be
	\end{pmatrix}\right\|_1 + \left\|\tau \cdot \begin{pmatrix}
	\be\\
	\be
	\end{pmatrix}\right\|_1 \\
	&\leq 2 \cdot n \cdot \|F_\tau(\bz)\|_2 + 2 \cdot n \cdot \tau \leq 2 \cdot n \cdot (\theta \cdot \tau + \tau)\,.\tageq\label{eqn:epsOptValue}
	\end{align*}
	In summary, at this point we have shown $q_\omega(\bx) \leq q_\omega(\bx^\star_\omega) + \varepsilon$ for the above bound of $\varepsilon$\,.
\end{proof}

\away{
\begin{proof}[Proof of Lemma~\ref{thm:EpsOptimality}] Consider the component $\bx$ of $\bz \in \cN(\tau,\nu)$\,. From the definition of the enveloped neighborhood spaces follows that there exists $\hbz \equiv (\hbx,\hblambda,\hbmu_L,\hbmu_R) \in \cN(\tau)$, such that $\|\bz-\hbz\|_2\leq \nu$\,.

{Note that $F_\tau(\hbz)=(\bO,\bO,\br\hp{3},\br\hp{4})$ and }
consider $\hbx,\hbmu_L,\hbmu_R$. We find they satisfy the gradient condition with $\bu = \hbx$ and $\bv = (\hbmu_L,\hbmu_R)$. Using Theorem \ref{Thm:OptimalityGap} we find $\hbx$ is an
$\varepsilon$-optimal solution to \eqref{eq:regQP} with the following bound for $\varepsilon$:
\begin{align*}
\varepsilon &= \begin{pmatrix}
\hbmu_L\\
\hbmu_R
\end{pmatrix}\t \cdot \begin{pmatrix}
\be + \hbx\\
\be - \hbx
\end{pmatrix}
\leq \left\|\begin{pmatrix}
\hbmu_L \cdot( \be + \hbx)\\
\hbmu_R \cdot( \be - \hbx)
\end{pmatrix} - \tau \cdot \begin{pmatrix}
\be\\
\be
\end{pmatrix}\right\|_1 + \left\|\tau \cdot \begin{pmatrix}
\be\\
\be
\end{pmatrix}\right\|_1 \\
&\leq 2 \cdot n \cdot \|F_\tau(\hbz)\|_2 + 2 \cdot n \cdot \tau \leq 2 \cdot n \cdot (\theta \cdot \tau + \tau)\,.\tageq\label{eqn:epsOptValue}
\end{align*}
In summary, at this point we have shown $q_\omega(\hbx) \leq q_\omega(\bx^\star_\omega) + \varepsilon$ for the above bound of $\varepsilon$\,. Next, we use that $\|\bx-\hbx\|_2\leq \nu$. From the bound
$$\max_{\bxi \in \overline{\Omega}}\lbrace \|\nabla q_\omega(\bxi)\|_2 \rbrace\leq C_q$$
we find
\begin{align*}
	q_\omega(\bx) \leq q_\omega(\hbx) + C_q \cdot \nu \leq q_\omega(\bx^\star_\omega) + \varepsilon + C_q \cdot \nu
\end{align*}
\end{proof}
}

We proceed with the proof of Lemma~\ref{lem:OptimErr_OptimGap}.

\begin{proof}[Proof of Lemma~\ref{lem:OptimErr_OptimGap}] This proof follows from Theorem~\ref{Thm:DistLeqOptgap} with the definition of $\bH$ in \eqref{eqn:DefH}. For an $\varepsilon$-optimal solution $\bx$ to \eqref{eq:regQP} we have
\begin{align*}
	\|\bx - \bx_\omega^\star\|^2_2 \cdot \frac{1}{\|\bH\inv\|_2} \leq \|\bx-\bx_{\omega}^\star\|^2_\bH \leq 2 \cdot \varepsilon
\end{align*}
Considering only the outer expressions, multiplying with $\|\bH\inv\|_2$, and using $\|\bH\inv\|_2\leq 1/\omega$, and finally taking the square-root, we arrive at the proposition.
\end{proof}

\section{Appendix B: Proofs of the stability section}

\subsection{Preliminary results related to spaces}
For some of our proofs below we need general results that relate to the neighborhood spaces $\cN,\cN_h$.
\begin{boxThm}[Space boundedness]\label{thm:SpaceBoundedness}
	Let $\bz \in \cN(\tau,\nu)$ for $\tau \leq \tau_A$\,, $\nu\leq \nu_2$\,. Then it holds:
	\begin{align*}
	\|\bz\|_2 \leq C_z\,.
	\end{align*}
\end{boxThm}
\begin{proof}
Consider first $\cbz=(\cbx,\cblambda,\cbmu_{L},\cbmu_{R})\in \cN(\tau)$.
Since $\cbx \in \Omega$, it follows $\|\cbx\|_2 \leq \sqrt{n}$\,. From the second part of $F_\tau$ we find
\begin{align*}
	\cblambda = \frac{-1}{\omega} \cdot (\bA \cdot \cbx - \bb)\,,
\end{align*}
which leads to $\|\cblambda\|_2\leq \frac{1}{\omega} \cdot (\|\bA\|_2\cdot\sqrt{n}+\|\bb\|_2) \leq C_\lambda$\,. Finally, from the first component of $F_\tau$ we find
\begin{align*}
\cbmu_L - \cbmu_R = (\omega\cdot\bI + \bQ) \cdot \cbx + \bc - \bA\t\cdot\cblambda\,,
\end{align*}
from which follows $\|\cbmu_L - \cbmu_R\|_2 \leq (\omega+\|\bQ\|_2)\cdot\sqrt{n}+\|\bc\|_2+\|\bA\t\|_2\cdot C_\lambda \leq C_{\Delta\mu}$\,.

Now, consider
\begin{align*}
	|\cmu_L^{[j]}\cdot(1+\check{x}^{[j]})-\tau|\leq \theta\cdot\tau\,,\quad\quad |\cmu_R^{[j]}\cdot(1-\check{x}^{[j]})-\tau|\leq \theta\cdot\tau\,,
\end{align*}
from which follows that always at least either $\cmu_L^{[j]}$ or $\cmu_R^{[j]}$ is bounded by $(1+\theta) \cdot \tau$\,. Thus $\mu^{[j]}_L \leq \min\lbrace\mu^{[j]}_L,\mu^{[j]}_R\rbrace+|\mu^{[j]}_L-\mu^{[j]}_R|$ and analogously for each $\mu_R^{[j]}$. It follows:
\begin{align*}
\|(\cbmu_{L},\cbmu_R)\|_\infty \leq (1+\theta) \cdot \tau_A + \|\cbmu_L-\cbmu_R\|_2
\end{align*}
Combining the results, so far we have
\begin{align*}
	\|(\cbmu_L,\cbmu_R)\|_2 \leq \sqrt{2 \cdot n} \cdot \big(\, (1+\theta)\cdot\tau_A + C_{\Delta\mu} \,\big) =: C_\mu\,.
\end{align*}
From the above bounds and our definition of the method parameter $C_z$ we find $\|\cbz\|_2 \leq C_z - \nu_2$ $\forall \cbz \in \cN(\tau)$ $\forall \tau \leq \tau_A$. For $\bz \in \cN(\tau,\nu)$ it follows by definition $\|\bz-\cbz\|_2\leq \nu$. Thus, using the bounds for $\cbz$ and its distance to $\bz$, we have shown the proposition.
\end{proof}
\noindent
This result is useful because it says that an iterate close to the central path is always bounded. The result will be used in the proof of Theorem~\ref{thm:PrimalDualStability}\,.

The next result guarantees interiorness of iterates that live in the enveloped neighborhood spaces.
\begin{boxThm}[Space interiorness]\label{thm:SpaceInteriorness}
	Let $\nu \leq C_\nu \cdot \nu_2$ and $\tau \in [\sigma \cdot \tau_E , \tau_A]$\,. Then it holds
	\begin{align*}
	\cN(\tau,\nu) \subset \cF\,,
	\end{align*}
	where
	\begin{align*}
	\cF := \Big\lbrace \bz \in \R^N \, \Big\vert \ \|\bx\|_\infty \leq 1-\cGap\,,\ \ \bmu_L,\bmu_R\geq \cGap \cdot \be \Big\rbrace\,.
	\end{align*}
\end{boxThm}

The space $\cF$ gives a strict measure of interiorness. The result is remarkable because it assures interiorness not only for iterates in the neighborhood of the central path, but also for iterates in the $\nu$-envelope of that neighborhood. This is particularly useful when dealing with numerical rounding errors because rounding-affected iterates can only live in enveloped spaces. Also this result is needed in our proof of Theorem~\ref{thm:PrimalDualStability}\,.

\begin{proof}[Proof of Theorem~\ref{thm:SpaceInteriorness}]
{Let $\bz \in \cN(\tau,\nu)$. Then it holds
$\|\bz-\hbz\|_\infty \leq \nu$  for a $\hbz \in \cN(\tau)$. Thus, $|x^{[j]}-\hat{x}^{[j]}|\leq \nu$,
$|\bmu_L^{[j]}-\hbmu_L^{[j]}| \leq \nu$  and $|\bmu_R^{[j]}-\hbmu_R^{[j]}| \leq \nu$.
Then, proceeding as in the proof of Lemma}\,\ref{lem:Denominator}\,, we have
$\forall\,j \in \lbrace 1,...,n\rbrace$:
	\begin{align*}
	x^{[j]} &\geq -1+\left(\frac{1-\theta}{\mu_L^{[j]}}\cdot\tau-\nu\right)\,,\quad&\quad
	x^{[j]} &\leq 1-\left(\frac{1-\theta}{\mu_R^{[j]}}\cdot\tau-\nu\right)\,,\\
	\mu_L^{[j]} &\geq \left(\frac{1-\theta}{1+x^{[j]}}\cdot\tau-\nu\right)\,,\quad&\quad
	\mu_R^{[j]} &\geq \left(\frac{1-\theta}{1-x^{[j]}}\cdot\tau-\nu\right)\,.
	\end{align*}
	We use Theorem~\ref{thm:SpaceBoundedness} to find the following bound:
	\begin{align*}
		\frac{(1-\theta) \cdot \tau}{\max\lbrace\,1-x^{[j]},1+x^{[j]},\mu_L^{[j]},\mu_R^{[j]}\rbrace}-\nu \geq \frac{1-\theta}{1+C_z} \cdot \sigma \cdot \tau_E-C_\nu \cdot \nu_2 \geq \cGap >0
	\end{align*}
	It holds $\cGap>0$ due to the way how we defined the other method parameters.
	The bound shows the strict interiorness of $\bx,\bmu_L,\bmu_R$\,.
\end{proof}

\subsection{Proof of Initialization section}

\begin{proof}[{Proof of Theorem~\ref{thm:PrimalStability}}]

We formerly showed $\bI \matleq \nabla^2 f(\bxi)$ $\forall \bxi \in 0.5\cdot {\Omega}$, cf. above Theorem~\ref{Thm:NewtonMinimization}\,. Thus, $\left\|\big(\nabla^2 f(\bxi)\big)\inv\right\|_2 \leq 1$\,. Further, $\|\nabla^2 f(\bxi)\|_2 \leq C_{Hf}$\,. Thus,
$$\cond_2\big(\nabla^2 f(\bxi)\big)\leq C_{Hf}\,.$$
From the requirement $\|\bx\|_2 \leq 0.5$ we can further conclude
\begin{align*}
		&\|\nabla f(\bx)\|_2 \\
	\leq& \frac{1}{\tau_A} \cdot \left\| \bQ \cdot \bx + \omega \cdot \bx + \bc + \frac{1}{\omega} \cdot (\bA \cdot \bx - \bb)\right\|_2 + \left\|\frac{1}{\be + \bx}\right\|_2 + \left\|\frac{1}{\be-\bx}\right\|_2\\
	\leq& \frac{1}{\tau_A} \cdot \Big(\omega + \|\bQ\|_2 + \frac{1}{\omega} \cdot (\|\bA\|_2^2+\|\bb\|_2) \Big) + 4 \cdot \sqrt{n} =: C_{Df}\,.\qedhere
\end{align*}
\end{proof}

\subsection{Proofs of Path-following section}

\paragraph{Proof of Theorem~\ref{thm:Boundedness}}

The bound for $\|\bz\|_2$ follows from Theorem~\ref{thm:SpaceInteriorness}\,.

Let us now establish bounds for the norms of the Jacobian matrix $DF$ {\eqref{eqn:DF_KKT}}, its inverse, and $\|F_\htau(\bz)\|_2$. We can express the Jacobian $DF$ as
\begin{align*}
&DF(\bz) \\
=&
\underbrace{
	\begin{bmatrix}
		\phantom{A}\bI_n\phantom{A} 	&  		& 								& 						\\
	  			& \phantom{A}\bI_m\phantom{A} & 								& 						\\
				& 		& \phantom{-}{}\opdiag(\bmu_L) 	& 						\\
				&  		&  								& -{}\opdiag(\bmu_R)	
	\end{bmatrix}
	}_{=:\bM_L(\bz)} \cdot\, \bK(\bz)\,
		\cdot
\underbrace{\begin{bmatrix}
	\bI_n 	&  			& 			& 					\\
			& -{}\bI_m 	& 			& 					\\
			&  			& -{}\bI_n 	& 					\\
			&  			&  			& \phantom{-}\bI_n	
	\end{bmatrix}}_{=:\bM_R}\,,
\end{align*}
where
\begin{align}
	\bK(\bz) = \left[\begin{array}{c|ccc}
	\bQ + \omega \cdot \bI & \bA\t & \bI & \bI\\
	\hline
	\bA & -\omega\cdot\bI & \bO & \bO \\
	\bI & \bO & -\opdiag\left(\frac{\be+\bx}{\bmu_L}\right) & \bO\\
	\bI & \bO & \bO & -\opdiag\left(\frac{\be-\bx}{\bmu_R}\right)
	\end{array}\right]\,.
\end{align}
$\bK$ has a $2 \times 2$ block-structure. The following lemma can be applied to bound the norm and inverse-norm of $\bK$.
\begin{lem}[Inverse-norm of stabilized saddle-matrix]\label{lem:InvNormRegSaddle}
	Let both $\bV \in \R^{a \times a}$, $\bW \in \R^{b \times b}$ be symmetric positive definite. Let $\bG \in \R^{b \times a}$. Consider
	\begin{align*}
	\bM := \begin{bmatrix}
	\bV & \phantom{A}\phantom{--}\bG\t\\
	\bG & \phantom{A}-\bW
	\end{bmatrix}\,.
	\end{align*}
	Then it holds:
	\begin{subequations}
		\begin{align*}
		\|\bM{}^{-1}\|_2 & \leq \max\lbrace\, \|\bV^{-1}\|_2\,,\,\|\bW^{-1}\|_2\,\rbrace\\
		\|\bM\|_2{}\phantom{^{-1}} & \leq \|\bV\|_2+\|\bW\|_2 + 2 \cdot \|\bG\|_2
		\end{align*}
	\end{subequations}
\end{lem}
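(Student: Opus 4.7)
The plan is to treat the two bounds separately. The upper bound on $\|\bM\|_2$ is routine: write $\bM = \mathrm{blkdiag}(\bV,-\bW) + \bigl[\begin{smallmatrix}\bO & \bG\t\\ \bG & \bO\end{smallmatrix}\bigr]$, and invoke the triangle inequality together with $\|\mathrm{blkdiag}(\bV,-\bW)\|_2 = \max\{\|\bV\|_2,\|\bW\|_2\} \leq \|\bV\|_2+\|\bW\|_2$ and the block-off-diagonal norm $\leq \|\bG\|_2$. The stated bound has the slack factor $2$ in front of $\|\bG\|_2$, which is only a loosening.

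The substantive part is the bound on $\|\bM^{-1}\|_2$, for which I would show the equivalent statement $\sigma_{\min}(\bM) \geq \min\{\lambda_{\min}(\bV),\lambda_{\min}(\bW)\}$. The trick I would use is to test $\bM(\bu,\bv)\t$ not against $(\bu,\bv)$ itself but against the \emph{sign-flipped} vector $(\bu,-\bv)$. A short computation gives
\begin{align*}
(\bu\t,-\bv\t)\,\bM\begin{pmatrix}\bu\\ \bv\end{pmatrix} = \bu\t\bV\bu + \bu\t\bG\t\bv - \bv\t\bG\bu + \bv\t\bW\bv = \bu\t\bV\bu + \bv\t\bW\bv,
\end{align*}
because the two cross-terms involving $\bG$ are scalar transposes of each other and cancel exactly due to the sign flip. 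This is the key cancellation, and it is the reason the saddle-point structure does \emph{not} interfere with the inverse-norm bound.

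From there I would use symmetric positive definiteness of $\bV$ and $\bW$ to bound the right-hand side below by $c\,(\|\bu\|_2^2+\|\bv\|_2^2)$ with $c:=\min\{\lambda_{\min}(\bV),\lambda_{\min}(\bW)\}=1/\max\{\|\bV^{-1}\|_2,\|\bW^{-1}\|_2\}$. Applying Cauchy--Schwarz to the left-hand side and noting $\|(\bu,-\bv)\|_2 = \|(\bu,\bv)\|_2$ yields
\begin{align*}
c\,\|(\bu,\bv)\|_2^{\,2} \;\leq\; \|(\bu,\bv)\|_2 \cdot \bigl\|\bM(\bu,\bv)\t\bigr\|_2,
\end{align*}
so $\|\bM(\bu,\bv)\t\|_2 \geq c\,\|(\bu,\bv)\|_2$ for every $(\bu,\bv)$, whence $\|\bM^{-1}\|_2\leq 1/c$, which is exactly the claim.

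The main obstacle is recognizing the right test vector: a naive pairing $(\bu,\bv)\mapsto (\bu,\bv)\t\bM(\bu,\bv)$ does not give a definite quadratic form because of the $-\bW$ block, and leaves the indefinite cross-terms in place. Once the sign-flip trick is identified, the rest is routine; no assumption on $\bG$ (rank, size, or conditioning) is needed, which is exactly what is required downstream when the lemma is applied to $\bK(\bz)$ with $\bW = \mathrm{blkdiag}(\omega\bI,\,(\be+\bx)/\bmu_L,\,(\be-\bx)/\bmu_R)$ and $\bV = \bQ+\omega\bI$.
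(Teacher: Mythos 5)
Your proof is correct and uses essentially the same argument as the paper: the sign-flipped test vector $(\bu,-\bv)$ is exactly the paper's $\bw=(\bu_1,-\bu_2)$, and the lower/upper bounds on the resulting quadratic form match the paper's ``left term''/``right term'' estimates. The only (immaterial) difference is in the routine $\|\bM\|_2$ bound, where you use a block splitting and the triangle inequality while the paper bounds $\|\bM\bu\|_2$ componentwise; both give the stated estimate.
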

\begin{proof}
Consider for an arbitrary $\|\bu\|_2>0$ the equation system
\begin{align*}
	\bM \cdot \underbrace{\begin{pmatrix}
		\bu_1\\
		\bu_2
	\end{pmatrix}}_{=:\bu} = \underbrace{\begin{pmatrix}
	\bv_1\\
	\bv_2
	\end{pmatrix}}_{=:\bv}
\end{align*}
and define $\bw := (\bu_1,-\bu_2) \in \R^{(a+b) \times 1}$\,. We find
\begin{align*}
	\bM \cdot \bu = \bv \quad \Rightarrow \quad \bw\t \cdot \bM \cdot \bu = \bw\t \cdot \bv\,.
\end{align*}
This is equivalent to
\begin{align*}
	\underbrace{\bu_1\t\cdot\bV\cdot\bu_1 + \bu_2\t\cdot\bW\cdot\bu_2}_{\textsf{left term}} = \underbrace{\bv_1\t\cdot\bu_1-\bv_2\t\cdot\bu_2}_{\textsf{right term}}\,.
\end{align*}
We use the following bounds:
\begin{align*}
	\textsf{left term}&\geq \frac{\|\bu_1\|_2^2}{\|\bV\inv\|_2}  + \frac{\|\bu_2\|_2^2}{\|\bW\inv\|_2} \geq \frac{1}{\max\lbrace \|\bV\inv\|_2\,,\,\|\bW\inv\|_2 \rbrace} \cdot \|\bu\|_2^2\\
\textsf{right term}&\leq \|\bv\|_2\cdot\|\bu\|_2
\end{align*}
Inserting the bounds for each term and dividing by $\|\bu\|_2$, we end up with
\begin{align*}
   \|\bu\|_2 \leq {\max\lbrace\,\|\bV\inv\|_2\,,\,\|\bW\inv\|_2\,\rbrace} \cdot \|\bv\|_2\,.
\end{align*}
{Then, recalling that  $\bv= \bM \cdot \bu$, we find}
\begin{align*}
{\|\bM\inv\|_2 := \min_{\bu\neq \bO} \frac{\|\bu\|_2}{\|\bM\cdot\bu\|_2}\le \max\lbrace\,\|\bV\inv\|_2\,,\,\|\bW\inv\|_2\,\rbrace.}
\end{align*}
The proof of the second proposition follows from
\begin{align}
	\|\bM \cdot \bu\|_2 \leq \left\|\begin{pmatrix}\|\bV\|_2 \cdot \|\bu_1\|_2 + \|\bG\t\|_2 \cdot \|\bu_2\|_2\\
	\|\bG\|_2 \cdot \|\bu_1\|_2 + \|\bW\|_2 \cdot \|\bu_2\|_2\\
	\end{pmatrix}\right\|_2\,.\label{eqn:NormBoundBlockMatrix}
\end{align}
\end{proof}

We bound $\|\bK(\bz)\inv\|_2$ for $\bz \in \cN(\tau,\nu_2)$ by using Lemma~\ref{lem:InvNormRegSaddle} with $\bV = \bQ + \omega \cdot \bI$ and $\bW$ the $3\times3$ lower right block. From {Theorems~\ref{thm:SpaceBoundedness}, \ref{thm:SpaceInteriorness} and}
\begin{align*}
\|\bV\inv\|_2 \leq \frac{1}{\omega}\,,\quad\|\bW^{-1}\|_2 \leq \max\Big\lbrace\,\frac{1}{\omega}\,,\,\frac{\bmu_L}{\be+\bx}\,,\,\frac{\bmu_R}{\be-\bx}\,\Big\rbrace \leq \max\left\lbrace\frac{1}{\omega},\frac{C_z}{\cGap}\right\rbrace\,.
\end{align*}
follows $\|\bK(\bz)\inv\|_2\leq\max\big\lbrace\frac{1}{\omega}\,,\,\frac{C_z}{\cGap}\big\rbrace$. Combining all bounds, we find
\begin{align*}
	\|DF(\bz)\inv\|_2 &\leq \underbrace{\|\bM_L(\bz)\inv\|_2}_{\leq 1/\cGap} \cdot \|\bK(\bz)\inv\|_2 \cdot \underbrace{\|\bM_R\|_2}_{\equiv 1} \leq \cDFinv\,.
\end{align*}
{Taking into account the structure of $DF$, using again  Lemma~\ref{lem:InvNormRegSaddle} and proceeding as to prove \eqref{eqn:NormBoundBlockMatrix}, we find
\begin{align*}
       \|DF(\bz)\|_2  \leq &\|\bQ\|_2 +2\cdot \omega + 2\cdot \|\bA\|_2 + 2\cdot\|\bI\|_2 \\
	 	 +&\|\opdiag(\bmu_L)\|_2 + {\|\opdiag(\be+\bx)\|_2}
	 +\|\opdiag(\bmu_R)\|_2 + {\|\opdiag(\be-\bx)\|_2}\\
	\leq & \|\bQ\|_2 + 2 \cdot \omega  + 2\cdot \|\bA\|_2 + 4 \cdot \|\bI\|_2 + 4 \cdot \underbrace{\|\opdiag(\bz)\|_2}_{\leq \|\bz\|_\infty \leq \|\bz\|_2}\\
	\leq & 2 \cdot \omega + \|\bQ\|_2 +  2 \cdot \|\bA\|_F + 4 \cdot C_z + 4 \leq \cDF\,.
\end{align*}

Finally, we show that $\|F_\htau(\hbz)\|_2 \leq C_F$ $\forall \hbz \in \cN(\tau,\nu_2)$\,.
This follows from the lemma below taking into account that there exists $\bz \in \cN(\tau)$ such that $\|\delta\bz\|_2=\|\hat \bz -\bz\|\le \nu_2$. The results in this Lemma will be used also in subsequent proofs.

\begin{lem}\label{lem:NewtonRhsInexact}
	Let $\bz,\delta\bz \in \R^N$, where $\bz \in \cN(\tau)$ and $\|\delta\bz\|_2\leq\nu_2$. Then
	\begin{align}
	\|F_\tau(\bz+\delta\bz)\|_2 &\leq \|F_\tau(\bz)\|_2 + \cdF \cdot \|\delta\bz\|_2\\
	\|DF(\bz+\delta\bz)\|_2 &\leq \|DF(\bz)\|_2 + \cdDF \cdot \|\delta\bz\|_2\,,
	\end{align}
	where $\cdF,\cdDF$ are method parameters.
\end{lem}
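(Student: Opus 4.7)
The plan is to exploit that $F_\tau$ is (componentwise) at most quadratic in $\bz$ and that $DF$ is affine in $\bz$, so both are locally Lipschitz with constants that depend only on the norm of $\bz$ and on fixed problem data. By Theorem~\ref{thm:SpaceBoundedness} the hypothesis $\bz\in\cN(\tau)$ yields $\|\bz\|_2\le C_z$, and with $\|\delta\bz\|_2\le\nu_2$ the perturbed iterate $\bz+\delta\bz$ stays in a ball of radius $C_z+\nu_2$ about the origin. All norm bounds produced below will be uniform on this ball.

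First I would prove the bound on $F_\tau$. Splitting the difference $F_\tau(\bz+\delta\bz)-F_\tau(\bz)$ into its four blocks from \eqref{eqn:F_KKT}: the first two blocks are affine in $\bz$, so their contribution is at most $(\|\bQ\|_2+\omega+\|\bA\|_2+2)\cdot\|\delta\bx\|_2+(\|\bA\|_2+\omega)\cdot\|\delta\blambda\|_2+\|\delta\bmu_L\|_2+\|\delta\bmu_R\|_2$. For the third block, expand
\[
(\bmu_L+\delta\bmu_L)\cdot(\be+\bx+\delta\bx)-\bmu_L\cdot(\be+\bx)=\bmu_L\cdot\delta\bx+\delta\bmu_L\cdot(\be+\bx)+\delta\bmu_L\cdot\delta\bx,
\]
whose 2-norm is bounded by $(\|\bmu_L\|_\infty+1+\|\bx\|_\infty+\|\delta\bmu_L\|_\infty)\cdot\|\delta\bz\|_2\le(C_z+2+\nu_2)\cdot\|\delta\bz\|_2$; the fourth block is analogous. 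Adding the four contributions produces an inequality $\|F_\tau(\bz+\delta\bz)-F_\tau(\bz)\|_2\le \cdF\cdot\|\delta\bz\|_2$ with a constant $\cdF$ that depends only on $\|\bQ\|_2,\|\bA\|_2,\omega,C_z,\nu_2$, and then the triangle inequality $\|F_\tau(\bz+\delta\bz)\|_2\le\|F_\tau(\bz)\|_2+\|F_\tau(\bz+\delta\bz)-F_\tau(\bz)\|_2$ gives the first claim.

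Next I would prove the bound on $DF$. Inspecting \eqref{eqn:DF_KKT} shows that all $\bz$-dependent entries sit in the four diagonal blocks $\opdiag(\bmu_L)$, $-\opdiag(\bmu_R)$, $\opdiag(\be+\bx)$ and $\opdiag(\be-\bx)$. Hence $DF(\bz+\delta\bz)-DF(\bz)$ is a block matrix whose only nonzero entries are $\opdiag(\delta\bmu_L)$, $-\opdiag(\delta\bmu_R)$ and $\pm\opdiag(\delta\bx)$. Each diagonal block has operator norm equal to its $\infty$-norm, which is bounded by $\|\delta\bz\|_\infty\le\|\delta\bz\|_2$. Summing the four contributions through a block norm inequality (as in \eqref{eqn:NormBoundBlockMatrix}) yields $\|DF(\bz+\delta\bz)-DF(\bz)\|_2\le\cdDF\cdot\|\delta\bz\|_2$ for some small absolute constant $\cdDF$ (in fact one can take $\cdDF=4$). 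Another triangle inequality then gives the second claim.

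The only subtlety is bookkeeping: one must check that the specific values $\cdF,\cdDF$ chosen in Appendix~C are at least as large as the explicit expressions produced above, and that they satisfy $\log(\cdF),\log(\cdDF)\in\cO(L)$ as required for the global complexity analysis. Both are immediate from the form of the bounds, since $\cdF$ is a polynomial in the input data and $\|\bz\|$-bounds (all of whose logarithms are $\cO(L)$), and $\cdDF$ is an absolute constant. No estimate beyond the triangle inequality, the a priori bound $\|\bz\|_2\le C_z$, and the product rule for diagonal matrices is needed, so there is no real analytical obstacle.
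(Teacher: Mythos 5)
Your argument is correct and is exactly the computation the paper leaves implicit: its entire proof of Lemma~\ref{lem:NewtonRhsInexact} is the one line ``the thesis easily follows from the definition of $F_\tau$ and $DF$'', and your block-by-block expansion plus the triangle inequality and the a priori bound $\|\bz\|_2\le C_z$ is the intended filling-in. The only caveat is your remark that one can take $\cdDF=4$, whereas Appendix~C lists only $\cdDF\ge 2$; a slightly sharper estimate that treats the two nonzero block-rows of $DF(\bz+\delta\bz)-DF(\bz)$ jointly yields the Lipschitz constant $\sqrt{2}$, so the minimal admissible parameter value is also covered, and since the method parameters are specified as lower bounds this discrepancy is immaterial.
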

\begin{proof}
	The thesis easily follows from the definition of $F_\tau$ and  $DF$.
\end{proof}
\noindent
Here ends the proof of Theorem~\ref{thm:Boundedness}\,.

\paragraph{Proofs of Theorems~\ref{thm:Newton-PathStep_stability},~\ref{thm:Newton-CentralityStep},~\ref{thm:ErrorResetStep}}

Theorem~\ref{thm:Boundedness} shows that the primal-dual systems within the Newton steps are well-conditioned in the sense that the condition number and norms are bounded. The following lemma gives a general result on how the solution of bounded well-conditioned linear systems is affected by perturbations.
\begin{lem}\label{lem:LinSysPerturb}
	Let $\bG,\delta\bG \in \R^{d \times d}$, $\bG$ regular, $\bv,\delta\bv \in \R^d$, and $\|\delta\bG\|_2 < 1/\|\bG\inv\|_2$. Then $\bG+\delta\bG$ is regular. Define
	\begin{align*}
	\bu := \bG\inv\cdot\bv\,,\quad\quad\tbu:=(\bG+\delta\bG)\inv\cdot(\bv+\delta\bv)\,.
	\end{align*}
	Then it holds:
	\begin{align*}
	\frac{\|\bu-\tbu\|_2}{\|\bu\|_2} \leq \frac{\kappa_2(\bG)}{1-\kappa_2(\bG)\cdot\frac{\|\delta\bG\|_2}{\|\bG\|_2}} \cdot \left(\frac{\|\delta\bv\|_2}{\|\bv\|_2}+\frac{\|\delta\bG\|_2}{\|\bG\|_2}\right)
	\end{align*}
\end{lem}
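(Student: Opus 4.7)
The plan is a textbook Neumann-series perturbation argument for linear systems. First I would verify that $\bG+\delta\bG$ is regular. Writing $\bG + \delta\bG = \bG\cdot(\bI + \bG\inv\cdot\delta\bG)$ and using submultiplicativity gives $\|\bG\inv\cdot\delta\bG\|_2 \leq \|\bG\inv\|_2 \cdot \|\delta\bG\|_2 < 1$ by hypothesis, so $\bI + \bG\inv\cdot\delta\bG$ is invertible via the convergent Neumann series, with
\[
\|(\bI + \bG\inv\cdot\delta\bG)\inv\|_2 \;\leq\; \frac{1}{1 - \|\bG\inv\|_2\cdot\|\delta\bG\|_2}.
\]
Multiplying by $\bG\inv$ on the left yields the standard bound
\[
\|(\bG+\delta\bG)\inv\|_2 \;\leq\; \frac{\|\bG\inv\|_2}{1 - \|\bG\inv\|_2\cdot\|\delta\bG\|_2}.
\]

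Next I would subtract the two defining identities. From $\bG\cdot\bu = \bv$ and $(\bG+\delta\bG)\cdot\tbu = \bv+\delta\bv$, a direct computation gives $(\bG+\delta\bG)\cdot(\tbu-\bu) = \delta\bv - \delta\bG\cdot\bu$, so
\[
\bu - \tbu \;=\; -(\bG+\delta\bG)\inv\cdot(\delta\bv - \delta\bG\cdot\bu).
\]
Applying the $2$-norm, the triangle inequality, and the bound from the first step gives
\[
\|\bu-\tbu\|_2 \;\leq\; \frac{\|\bG\inv\|_2}{1 - \|\bG\inv\|_2\cdot\|\delta\bG\|_2}\cdot\bigl(\|\delta\bv\|_2 + \|\delta\bG\|_2\cdot\|\bu\|_2\bigr).
\]

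Finally I would normalize by $\|\bu\|_2$ and convert the $\|\delta\bv\|_2/\|\bu\|_2$ term into $\|\delta\bv\|_2/\|\bv\|_2$ using the forward estimate $\|\bv\|_2 = \|\bG\cdot\bu\|_2 \leq \|\bG\|_2\cdot\|\bu\|_2$, i.e.\ $1/\|\bu\|_2 \leq \|\bG\|_2/\|\bv\|_2$. Multiplying numerator and denominator of the leading fraction by $\|\bG\|_2$ packages $\|\bG\|_2\cdot\|\bG\inv\|_2$ into $\kappa_2(\bG)$ both in the prefactor and in the two relative-perturbation terms, producing exactly the stated inequality.

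There is no genuine difficulty here; this is a classical estimate. The two points requiring a little care are (i) making sure the strict hypothesis $\|\delta\bG\|_2 < 1/\|\bG\inv\|_2$ is used to legitimize the Neumann-series bound with a non-strict inequality, and (ii) using the \emph{right} direction of the estimate between $\|\bu\|_2$ and $\|\bv\|_2$: only $\|\bv\|_2 \leq \|\bG\|_2\cdot\|\bu\|_2$ produces the clean factor $\kappa_2(\bG)$, whereas the reverse estimate would introduce an extra $\kappa_2(\bG)$ and thereby weaken the bound.
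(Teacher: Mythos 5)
Your proof is correct and is the standard Neumann-series perturbation argument; the paper itself does not prove this lemma but merely cites it (eq.\ (4.3.4) of the Trust Region Methods book), and your derivation is exactly the classical one behind that citation, including the correct direction $\|\bv\|_2 \leq \|\bG\|_2\cdot\|\bu\|_2$ needed to package the constants into $\kappa_2(\bG)$.
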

\begin{proof}Cf. \cite[eq. (4.3.4)]{TrustRegionMethods}.\end{proof}

We use this lemma to show the following theorem.
\begin{boxThm}\label{thm:PerturbedNewtonStep}
	Let $\bz \in \cN(\tau)$, where $\tau \in [\sigma \cdot \tau_E,\tau_A]$\,. Let $\htau \in [\sigma \cdot \tau,\tau]$. Define
	\begin{align*}
		\hbz := \bz - \underbrace{DF(\bz)\inv \cdot F_\htau(\bz)}_{=:\Delta\bz}\,.
	\end{align*}
	Let $\tbz \in \cB_{\nu}(\bz)$, where $\nu \in [\nu_0/C_\nu,\nu_2]$. Compute
	\begin{align*}
		\hat{\tbz} := \tbz - \underbrace{DF(\tbz)\inv \cdot F_\htau(\tbz)}_{=:\Delta\tbz}
	\end{align*}
	on an IEEE machine with unit roundoff $\epsMach>0$ sufficiently small, and with a numerically stable method for the solution of the linear system. Then it holds:
	$$\|\hat{\tbz}-\hbz\|_2 \leq C_\nu \cdot \nu$$
\end{boxThm}

\begin{proof}
From Lemma~\ref{lem:LinSysPerturb}, {Lemma~\ref{lem:NewtonRhsInexact}} and Theorem~\ref{thm:Boundedness} we find
\begin{align*}
	\frac{\|\Delta\bz-\Delta\tbz\|_2}{\|\Delta\bz\|_2} \leq& \underbrace{\frac{\kappa_{DF}}{1-\kappa_{DF}\cdot \frac{\cdDF\cdot\nu_2}{\|DF(\bz)\|_2}}}_{\leq C_{\delta\Delta z}} \cdot
	\left(
		\frac{\cdF\cdot\nu}{\|F_\htau(\bz)\|_2} + \frac{\cdDF\cdot\nu}{\|DF(\bz)\|_2}
	\right)\\
	&\quad + \kappa_{DF} \cdot \cO(\epsMach)\,.
\end{align*}
The first term in the bound arises because the linear system for $\Delta\tbz$ is different from that for $\Delta\bz$ in so far, that $F_\htau$ and $DF$ are not evaluated at $\bz$ but at $\tbz$. The second term arises because we use a numerically stable method with a unit roundoff $\epsMach$, which introduces an additional perturbation to $\Delta\tbz$ that depends on the condition number and unit roundoff.

The bound
$$\frac{\kappa_{DF}}{1-\kappa_{DF}\cdot \frac{\cdDF\cdot\nu_2}{\|DF(\bz)\|_2}}\le C_{\delta\Delta z}$$
can be shown by using $\|DF(\bz)\|_2\geq \omega$.
Moreover, multiplying the above inequality with $\|\Delta\bz\|_2$ and further using the bounds 
$$\|\Delta\bz\|_2\leq \|F_\htau(\bz)\|_2 \cdot \|DF(\bz)\inv\|_2\quad , \quad \|\Delta\bz\|_2\leq 2 \cdot C_z\,,$$
of which the latter follows from Theorem~\ref{thm:Boundedness} {(because the $2$-norm of the exact iterates $\bz,\hbz$ is bounded by $C_z$)}, we obtain
\begin{align*}
	\|\Delta\bz-\Delta\tbz\|_2 \leq & C_{\delta\Delta z} \cdot
	\Bigg(
		\frac{\cdF\cdot\nu}{\|F_\htau(\bz)\|_2} \cdot \|F_\htau(\bz)\|_2 \cdot \underbrace{\|DF(\bz)\|_2}_{\leq \cDFinv} + \frac{\cdDF\cdot\nu}{\omega} \cdot 2 \cdot C_z
	\Bigg)\\
	&\quad+2 \cdot C_z \cdot \kappa_{DF} \cdot \cO(\epsMach)\\
	\leq& \underbrace{C_{\delta\Delta z} \cdot (\cdF \cdot \cDFinv + 2/\omega \cdot \cdDF \cdot C_z)}_{\leq C_\nu /2} \cdot \nu\\
	&\quad+ \underbrace{2 \cdot C_z \cdot \kappa_{DF} \cdot \cO(\epsMach)}_{\leq 0.5 \cdot \nu_0} \\
	\leq& 0.5 \cdot C_\nu \cdot \nu + 0.5 \cdot C_\nu \cdot \underbrace{\frac{\nu_0}{C_\nu}}_{\leq \nu} \leq C_\nu \cdot \nu \,,
\end{align*}
where $\epsMach$ must be chosen sufficiently small. In particular,
\begin{align*}
	const \cdot \epsMach \leq \frac{\nu_0}{4 \cdot C_z \cdot \kappa_{DF}}\,,\tageq\label{eqn:epsMachBoundPrimalDual}
\end{align*}
where $const$ again depends on the particular linear equation system solver that is used.
\end{proof}

The theorem can be directly used for proving Theorems~\ref{thm:Newton-PathStep_stability} and \ref{thm:Newton-CentralityStep}\,.
\begin{proof}[Proof of Theorem~\ref{thm:Newton-PathStep_stability}] Consider $\bz \in \cN(\tau)$ with $\tau \in [\tau_E,\tau_A]$ so that $\tbz \in \cB_{\nu_0}(\bz)$. Choose $\htau = \sigma \cdot \tau$. Define $\hbz,\hat{\tbz}$ as in Theorem~\ref{thm:PerturbedNewtonStep}. It says $\|\hbz - \hat{\tbz}\|_2 \leq C_\nu \cdot \nu_0 \leq \nu_1$. From Theorem~\ref{thm:Newton-PathStep} we further know $\hbz \in \cN(\htau)$. Thus $\hat{\tbz} \in \cN(\htau,\nu_1)$\,.
\end{proof}

\begin{proof}[Proof of Theorem~\ref{thm:Newton-CentralityStep}] Consider $\bz \in \cN(\tau)$ with $\tau \in [\sigma \cdot \tau_E,\tau_A]$ so that $\tbz \in \cB_{\nu_1}(\bz)$. Choose $\htau = \tau$. Define $\hbz,\hat{\tbz}$ as in Theorem~\ref{thm:PerturbedNewtonStep}. It says $\|\hbz - \hat{\tbz}\|_2 \leq C_\nu \cdot \nu_1 \leq \nu_2$. From Corollary~\ref{cor:CompRef} we further know $\hbz \in \cN_h(\htau)$. Thus $\hat{\tbz} \in \cN_h(\htau,\nu_2)$\,.
\end{proof}

For the proof of Theorem~\ref{thm:ErrorResetStep} we need the following additional result.
\begin{lem}\label{lem:PathErrResetExact}
	Let $\bz \in \cN_h(\tau,\nu_2)$, where $\tau\geq \sigma \cdot \tau_E$\,. Choose $\htau=\tau$ and define
	\begin{align*}
		(\br\hp{1},\br\hp{2},\br\hp{3},\br\hp{4}) &:= F_\tau(\bz)\\
		\hbz &:= \bz \underbrace{- DF(\bz)\inv \cdot (\br\hp{1},\br\hp{2},\bO_n,\bO_n)}_{=:\Delta\bz}\,.
	\end{align*}
	Then $\hbz \in \cN(\tau)$\,.
\end{lem}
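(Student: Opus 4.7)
The plan is to imitate the proof of Theorem~\ref{thm:Newton-PathStep}, adapted to the fact that here the linear system has zero right-hand side in its third and fourth block rows and $\htau = \tau$. First, I would substitute $\hbz = \bz + \Delta\bz$ into $F_\tau$ block by block. The linear equations in the complementarity blocks read $\bmu_L\cdot\Delta\bx + (\be+\bx)\cdot\Delta\bmu_L = \bO$ and $-\bmu_R\cdot\Delta\bx + (\be-\bx)\cdot\Delta\bmu_R = \bO$, so a direct expansion identical in structure to \eqref{eqn:PatternF} yields
\begin{align*}
F_\tau(\hbz) \;=\; (\bO_n,\,\bO_m,\,\br\hp{3} + \Delta\bmu_L\cdot\Delta\bx,\,\br\hp{4} - \Delta\bmu_R\cdot\Delta\bx).
\end{align*}
The first two blocks vanish automatically, so only the complementarity-block bound and the strict interiorness of $\hbz$ need to be established.

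Next I would bound each ingredient via the enveloped-neighborhood hypothesis. Because $\bz \in \cN_h(\tau,\nu_2)$, there exists $\tbz \in \cN_h(\tau)$ with $\|\bz-\tbz\|_2 \leq \nu_2$, for which the first two blocks of $F_\tau(\tbz)$ vanish and the complementarity blocks have norm at most $0.5\,\theta\tau$. Applying Lemma~\ref{lem:NewtonRhsInexact} gives $\|(\br\hp{1},\br\hp{2})\|_2 \leq \cdF\,\nu_2$ and $\|(\br\hp{3},\br\hp{4})\|_2 \leq 0.5\,\theta\tau + \cdF\,\nu_2$, while Theorem~\ref{thm:Boundedness} gives $\|\Delta\bz\|_2 \leq \cDFinv\cdot\cdF\cdot\nu_2$ (the right-hand side of the linear system has zero complementarity blocks, so only the bound on $(\br\hp{1},\br\hp{2})$ enters). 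The quadratic coupling is then controlled by $\|(\Delta\bmu_L\cdot\Delta\bx,\,-\Delta\bmu_R\cdot\Delta\bx)\|_2 \leq \|\Delta\bz\|_2^2 \leq (\cDFinv\cdot\cdF\cdot\nu_2)^2$, and a triangle inequality yields
\begin{align*}
\|(\br\hp{3} + \Delta\bmu_L\cdot\Delta\bx,\,\br\hp{4} - \Delta\bmu_R\cdot\Delta\bx)\|_2 \;\leq\; 0.5\,\theta\tau + \cdF\,\nu_2 + (\cDFinv\cdot\cdF\cdot\nu_2)^2.
\end{align*}
Using $\tau \geq \sigma\tau_E$ together with the way $\nu_2$ is fixed in \ConstantsRef, the last two summands together are bounded by $0.5\,\theta\sigma\tau_E \leq 0.5\,\theta\tau$, which delivers the required bound $\leq \theta\tau$ for membership in $\cN(\tau)$.

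Finally, I would establish the strict interiorness of $\hbz$ by the same contradiction argument as in Lemma~\ref{Lem:Interiorness}. Since $\cN_h(\tau,\nu_2) \subset \cN(\tau,\nu_2) \subset \cF$ by Theorem~\ref{thm:SpaceInteriorness}, one has $\|\bx\|_\infty \leq 1-\cGap$ and $\bmu_L,\bmu_R \geq \cGap\,\be$, so each componentwise product $(1+x^{[j]})\mu_L^{[j]}$ and $(1-x^{[j]})\mu_R^{[j]}$ is bounded below by the positive quantity produced by the bound on $(\br\hp{3},\br\hp{4})$ above; this plays exactly the role that Lemma~\ref{lem:Denominator} played in the original argument. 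If some component of $\be+\hbx$, $\be-\hbx$, $\hbmu_L$ or $\hbmu_R$ were non-positive, then, as in Lemma~\ref{Lem:Interiorness}, the corresponding $|\Delta x^{[j]}\cdot\Delta\mu_L^{[j]}|$ or $|\Delta x^{[j]}\cdot\Delta\mu_R^{[j]}|$ would inherit that lower bound and contradict the infinity-norm bound on $F_\tau(\hbz)$ just established.

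The main obstacle is entirely at the level of the method-parameter inequalities: one must verify that $\cdF\,\nu_2 + (\cDFinv\cdot\cdF\cdot\nu_2)^2 \leq 0.5\,\theta\,\sigma\,\tau_E$ and that the interiorness gap $\cGap^2$ exceeds the relevant residual norms, both of which are enforced by the definitions in \ConstantsRef. Modulo this bookkeeping, there is no new analytical difficulty once the identity corresponding to \eqref{eqn:PatternF} has been transcribed to this modified Newton step.
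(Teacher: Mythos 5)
Your proposal is correct, but it proves the lemma by a genuinely different route than the paper. The paper never expands $F_\tau(\hbz)$ into a quadratic remainder: it bounds $\|\Delta\bz\|_2\leq \cDFinv\cdot\cdF\cdot\nu_2\leq (C_\nu-1)\cdot\nu_2$, concludes $\hbz\in\cN_h(\tau,C_\nu\cdot\nu_2)$, gets interiorness for free from Theorem~\ref{thm:SpaceInteriorness}, and then bounds $\|F_\tau(\hbz)\|_2\leq 0.5\cdot\theta\cdot\tau+\cdF\cdot C_\nu\cdot\nu_2\leq\theta\cdot\tau$ purely by the Lipschitz estimate of Lemma~\ref{lem:NewtonRhsInexact} applied to a nearby point of $\cN_h(\tau)$; only the vanishing of the first two residual blocks is argued via linearity, as you do. Your route instead transcribes the identity \eqref{eqn:PatternF} to the modified step, controls the remainder by $\|\Delta\bz\|_2^2$, and handles interiorness by the contradiction argument of Lemma~\ref{Lem:Interiorness}. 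Both work, but note two points of bookkeeping that your route incurs and the paper's avoids. First, your final inequality $\cdF\cdot\nu_2+(\cDFinv\cdot\cdF\cdot\nu_2)^2\leq 0.5\cdot\theta\cdot\sigma\cdot\tau_E$ is not literally one of the constraints in \ConstantsRef; it does follow from them (since $(\cDFinv\cdot\cdF\cdot\nu_2)^2\leq\big((C_\nu-1)\cdot\nu_2\big)^2\leq (C_\nu-1)\cdot\cdF\cdot\nu_2$ because $(C_\nu-1)\cdot\nu_2\leq\cGap\leq 1\leq\cdF$, so the sum is at most $C_\nu\cdot\cdF\cdot\nu_2$, exactly the quantity the paper's constraint on $\nu_2$ controls), but this step should be written out. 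Second, in your interiorness argument the third block of $F_\tau(\hbz)$ is $\br\hp{3}+\Delta\bmu_L\cdot\Delta\bx$, not $\Delta\bmu_L\cdot\Delta\bx$ alone, so the product $\Delta x^{[j]}\cdot\Delta\mu_L^{[j]}$ is not directly bounded by $\|F_\tau(\hbz)\|_\infty$; you must either subtract $\|\br\hp{3}\|_\infty$ or, more simply, bound it by $\|\Delta\bz\|_2^2\leq\big((C_\nu-1)\cdot\nu_2\big)^2$, which is far below the lower bound $(1-0.5\cdot\theta)\cdot\tau-\cdF\cdot\nu_2$ on $(1+x^{[j]})\cdot\mu_L^{[j]}$ and still yields the contradiction. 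With those two repairs spelled out, your argument is complete; the paper's perturbative version is shorter because it reuses Theorem~\ref{thm:SpaceInteriorness} and Lemma~\ref{lem:NewtonRhsInexact} wholesale and needs no new parameter inequality.
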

\begin{proof} Taking into account the definition of the neighbourhoods $\cN_h(\tau,\nu_2)$,  $\cN(\tau)$ and  the form  of $F_\tau$
 we find $\|(\br\hp{1},\br\hp{2},\bO,\bO)\|_2 \leq \cdF \cdot \nu_2$. Together with Theorem~\ref{thm:Boundedness} it follows
\begin{align*}
	\|\Delta\bz\|_2 &\leq \|DF(\bz)\inv\|_2 \cdot \|(\br\hp{1},\br\hp{2},\bO_n,\bO_n)\|_2 \\
	&\leq \cDFinv \cdot \cdF \cdot \nu_2 \leq (C_\nu - 1) \cdot \nu_2\,.
\end{align*}
Thus, $\hbz \in \cN_h(\tau,C_\nu \cdot \nu_2)$. Theorem~\ref{thm:SpaceInteriorness} shows $\hbz \in \cF$\,, {i.e. $\|\hbx\|_\infty<1$ and $\hbmu_L,\hbmu_R>\bO$}. According to the definition of $\cN_h(\tau,C_\nu \cdot \nu_2)$, there exists $\cbz \in \cN_h(\tau)$ that satisfies $\|\hbz-\cbz\|_2 \leq C_\nu \cdot \nu_2$. From Lemma~\ref{lem:NewtonRhsInexact} follows
\begin{align*}
	\|F_\tau(\hbz)\|_2 \leq \underbrace{\|F_\tau(\cbz)\|_2}_{\leq 0.5\cdot \theta\cdot\tau} + \underbrace{\cdF \cdot C_\nu \cdot \nu_2}_{\leq 0.5 \cdot \theta \cdot \sigma \cdot \tau_E} \leq \theta \cdot \tau\,.
\end{align*}
Also consider $(\hbr\hp{1},\hbr\hp{2},\hbr\hp{3},\hbr\hp{4}):=F_\tau(\hbz)$. Due to linearity of the Newton step it follows $\hbr\hp{1}=\bO_n,\hbr\hp{2}=\bO_m$\,. Altogether, $\hbz \in \cN(\tau)$\,.
\end{proof}

We now show the final result.

\begin{proof}[Proof of Theorem~\ref{thm:ErrorResetStep}] Choose $\tau := \htau$.
{Let $\tbz \in \cN_h(\hat \tau,\nu_2)$ and  $\thbz$ as stated in Theorem~\ref{thm:ErrorResetStep}. For the proof we make use of Lemma~\ref{lem:PathErrResetExact} with
$\bz = \tbz$ and $\hbz$ as stated in the Lemma.
Then, from Lemma~\ref{lem:PathErrResetExact} follows $\hbz \in \cN(\tau)$.}

{Moreover, proceeding as in the proof of Theorem~\ref{thm:PerturbedNewtonStep} we obtain  a relation between the exact vector $\hat{\bz}$ and the computed vector $\hat{\tbz}$. Namely, we prove that $\hat{\tbz} \in \cB_{(C_\nu \cdot \nu_*)}(\hbz)$, where $C_\nu \cdot \nu_* \leq \nu_0$. Thus,  $\hbz \in \cN(\tau)$ yields $\hat{\tbz} \in \cN(\tau,\nu_0)$.}

If there is no rounding error then $\thbz\equiv\hbz \in \cN(\htau)$.
\end{proof}

\setlist[description]{labelwidth=1.3cm,leftmargin=!}
\section{Appendix C: Method parameters}

\subsection{Parameters and descriptions}
\paragraph{Dimensions}
\begin{description}
	\item[$n $] dimension of primal vector x
	\item[$m$] number of constraints
	\item[$N$] dimension of primal-dual vector z
\end{description}
\paragraph{KKT regularization}
\begin{description}
	\item[$\omega$] quadratic penalty/ strict convexity parameter
\end{description}
\paragraph{Primal iterations}
\begin{description}
	\item[$\rho$] primal iteration residual tolerance
\end{description}
\paragraph{Central path neighborhood}
\begin{description}
	\item[$\theta$] width of neighborhood of central path
	\item[$\beta$] parameter for reduction of complementarity
\end{description}
\paragraph{Central path parametrization}
\begin{description}
	\item[$\tau_A$] starting point of central path
	\item[$\tau_E$] end point of central path
	\item[$\sigma$] geometric factor of path-following iteration
\end{description}
\paragraph{Iteration counts}
\begin{description}
	\item[$K$] number of primal iterations
	\item[$M $] number of primal-dual iterations
\end{description}
\paragraph{Bounds for iterates and gradients/ Hessians}
\begin{description}
	\item[$C_q $] bound of $|q|$ and $\|\nabla q\|_2$ in $\overline{\Omega}$
	\item[$C_{Hf}$] bound of $\|\nabla^2 f\|_2$ in $0.5 \cdot \overline{\Omega}$
	\item[$C_{\lambda}$] bound of 2-norm of $\blambda$
	\item[$C_{\Delta\mu}$] bound of 2-norm of $\bmu_R-\bmu_L$
	\item[$C_{\Delta z}$] bound of 2-norm of Newton step $\Delta\bz$
	\item[$C_{\delta\Delta z}$] bound of 2-norm of rounding error of $\Delta\bz$
	\item[$C_z$] bound of 2-norm of primal-dual iterate $\bz$
\end{description}
\paragraph{Conditioning and bounds of primal-dual KKT system}
\begin{description}
	\item[$C_F$] norm of KKT residual
	\item[$C_{DF}$] norm of Jacobian of KKT residual
	\item[$C_{DFinv}$] norm of inverse of Jacobian of KKT residual
	\item[$C_{\delta{}F}$] sensitivity of norm of KKT residual with respect to primal-dual vector
	\item[$C_{\delta{}DF}$] sensitivity of norm of Jacobian of KKT residual with respect to primal-dual vector
\end{description}
\paragraph{Enveloped neighborhood spaces}
\begin{description}
	\item[$\cGap$] least absolute component-wise distance of $\bx$ to $-\be,+\be$, and of $\bmu_L,\bmu_R$ to $\bO$
	\item[$\nu_0$] envelope of initial numerical iterate
	\item[$\nu_1$] envelope of iterate after path-following step
	\item[$\nu_2$] envelope of iterate after centrality step
	\item[$C_\nu$] geometric rate of growth of envelope due to numerical rounding errors
\end{description}

\subsection{Computation of parameters}
All the method parameters can be computed from $\bQ,\bc,\bA,\bb,\tol$ in the following order.

The inequalities in the formulas below have the following meaning. We mean $:=$, but since there are numerical rounding errors we want to make sure that some values are rounded to the next smaller floating-point value whereas others are rounded to the next larger floating-point value. The inequalities indicate the direction of rounding that must be used in order to yield numerical stability and correctness subject to rounding errors.
\allowdisplaybreaks
\begin{align*}
	\theta &\leq 0.3\\
	\beta &\leq \theta\\
	\sigma &\geq 1 - \frac{\beta}{\sqrt{2\cdot n}}\\
	N 	&= 3 \cdot n + m\,,\\
	C_{Hf} &\geq 10\\
	C_q &\geq \|\bQ\|_2 \cdot n + \|\bc\|_2 \cdot \sqrt{n}\\
	\omega &\leq \min\left\lbrace\,\frac{\tol}{2 \cdot n},\frac{\tol^2}{4 \cdot C_q+n}\cdot\frac{1}{16},1\,\right\rbrace\\
	C_\lambda &\geq \frac{\|\bA\|_2\cdot \sqrt{n} + \|\bb\|_2}{\omega}\\
	C_{\Delta\mu} &\geq (\omega+\|\bQ\|_2) \cdot \sqrt{n} + \|\bc\|_2 + \|\bA\t\|_2 \cdot C_\lambda\\
	\tau_A &\geq \max\left\lbrace\,\frac{\|\bQ+\omega\cdot\bI+1/\omega \cdot \bA\t\cdot\bA\|_2}{4}\,,\,4 \cdot \|\bc-1/\omega \cdot \bA\t\cdot\bb\|_2\,\right\rbrace\\
	\tau_E &\leq \frac{\tol^2 \cdot \omega}{48 \cdot n \cdot \max\big\lbrace\,\|\bA\|_2\,,\,C_q\,\rbrace}\\
	C_\mu &\geq \sqrt{2 \cdot n} \cdot \big( C_{\Delta\mu} + (1+\theta) \cdot \tau_A \big)\\
	C_z &\geq \sqrt{n + C_\lambda^2 + C_\mu^2} + 0.1\\
	\cGap &\leq \frac{1-\theta}{1+C_z} \cdot \sigma \cdot \tau_E \cdot \frac{1}{2}\\
	\cDF &\geq \|\bQ\|_2 + 2 \cdot \omega + 2 \cdot \|\bA\|_F + 4 + 4 \cdot C_z\\
	\cDFinv &\geq \frac{1}{\cGap} \cdot \max\left\lbrace\,\frac{1}{\omega}\,,\,\frac{C_z}{\cGap}\,\right\rbrace\\
	\kappa_{DF} &\geq \cDF \cdot \cDFinv\\
	\cdF &\geq \cDF\\
	\cdDF &\geq 2\\
	C_{\delta\Delta z} &\geq 2 \cdot \kappa_{DF}\\
	\nu_2 &\leq \min\left\lbrace\,0.1\,,\,\frac{\cGap}{C_\nu}\,,\,\frac{\omega}{2 \cdot \cdDF \cdot \kappa_{DF}}\,,\,\frac{\theta \cdot \sigma \cdot \tau_E}{2 \cdot C_\nu \cdot \cdF} \,\right\rbrace\\
	C_\nu &\geq \max\big\lbrace\, 2 \cdot C_{\delta\Delta z} \cdot (\cdF \cdot \cDFinv + 2/\omega \cdot \cdDF \cdot C_z)\,,\\
	&\quad\quad\,1 + \cDFinv \cdot \cdF \,\big\rbrace\\
	\nu_1 &\leq \nu_2 / C_\nu\\
	\nu_0 &\leq \min\left\lbrace\,\nu_1 / C_\nu\,,\,\frac{\tol}{2 \cdot \max\lbrace\|\bA\|_2,C_q\rbrace}\,\right\rbrace\\
	\rho 	&\leq\frac{1}{4 \cdot \sqrt{N}} \cdot \frac{\nu_2}{1/\omega \cdot \|\bA\|_2 + 1 + 8 \cdot \tau_A}\\
	\itersPrimal &\geq \left\lceil \log_2\left( 1 + \log_2(C_{Hf}/\rho) \right) \right\rceil\\
	\itersPrimalDual &\geq \left\lceil\frac{\log(\tau_E)-\log(\tau_A)}{\log(\sigma)}\right\rceil
\end{align*}

\end{document}